\renewcommand{\qedsymbol}{\rule[.12ex]{1.2ex}{1.2ex}}
\renewcommand{\subset}{\subseteq}
\def\moverlay{\mathpalette\mov@rlay}
\def\mov@rlay#1#2{\leavevmode\vtop{
		\baselineskip\z@skip \lineskiplimit-\maxdimen
		\ialign{\hfil$#1##$\hfil\cr#2\crcr}}}
\newcommand{\plangle}{\moverlay{(\cr<}}
\newcommand{\prangle}{\moverlay{)\cr>}}
\newcommand{\ff}{\C\plangle x \prangle}
\newcommand{\ffa}{\C\plangle a \prangle}
\newcommand{\fftwo}{\C\plangle a,x \prangle}
\newcommand{\ffthree}{\C\plangle h,a,x\prangle}
\newsavebox\myboxA
\newsavebox\myboxB
\newlength\mylenA
\newcommand*\xoverline[2][0.75]{%
    \sbox{\myboxA}{$\m@th#2$}%
    \setbox\myboxB\null
    \ht\myboxB=\ht\myboxA%
    \dp\myboxB=\dp\myboxA%
    \wd\myboxB=#1\wd\myboxA
    \sbox\myboxB{$\m@th\overline{\copy\myboxB}$}
    \setlength\mylenA{\the\wd\myboxA}
    \addtolength\mylenA{-\the\wd\myboxB}%
    \ifdim\wd\myboxB<\wd\myboxA%
       \rlap{\hskip 0.5\mylenA\usebox\myboxB}{\usebox\myboxA}%
    \else
        \hskip -0.5\mylenA\rlap{\usebox\myboxA}{\hskip 0.5\mylenA\usebox\myboxB}%
    \fi}
\newtheorem{theorem}            {Theorem}[section]
\newtheorem{corollary}          [theorem]{Corollary}
\newtheorem{proposition}        [theorem]{Proposition}
\newtheorem{lemma}              [theorem]{Lemma}
\newtheorem{remark}         [theorem]{Remark}
\newtheorem{example}            [theorem]{Example}
\newcommand{\mycontentsbox}{%
{\parskip=1.0pt
\newpage\centerline{NOT FOR PUBLICATION}\tableofcontents\printindex}}
\def\enddoc@text{\ifx\@empty\@translators \else\@settranslators\fi
\ifx\@empty\addresses \else\@setaddresses\fi
\newpage\mycontentsbox
}
 \numberwithin{equation}{section}
\newcommand{\gv}{{\tt{g}}}
\newcommand{\hv}{{\tt{h}}}
\newcommand{\tg}{{\tt{h}}}
\def\cD{ {\mathcal D} }
\def\cE{ {\mathcal E} }
\def\cH{ {\mathcal H} }
\def\cS{{\mathcal S} }
\def\cV{{\mathcal V}}
\def\cU{\mathcal U}
\newcommand{\dom}{\operatorname{dom}}
\newcommand{\domp}{\dom^{+}}
\newcommand{\domkeb}{\operatorname{dom}_{\rm ver}}
\newcommand{\domkebp}{\domkeb^+}
\def\bbS{{\mathbb S}}
\def\C{\mathbb C}
\def\R{\mathbb R}
\def\range{\operatorname{rng}}
\def\ran{\operatorname{rng}}
\newcommand{\SMR}{SMR\xspace}
\newcommand{\kebab}{vertebral\xspace}
\newcommand{\robust}{full\xspace}
\newcommand{\dis}{disaggregate\xspace}
\mathchardef\mhyphen="2D
\def\cH{\mathcal{H}}
\def\tY{\widetilde{Y}}
\def\ax{\langle x \rangle}
\def\fatwo{\C\langle a,x\rangle}
\def\faa{\C\langle a\rangle}
\newcommand{\axy}{\langle x,y \rangle}
\def\Span{\operatorname{span}}
\def\tA{\widetilde{A}}
\newcommand{\tz}{\widetilde{z}}
\newcommand{\Hxy}{H^{{\scriptstyle xy}}}
\newcommand{\msS}{\mathscr{S}}
\newcommand{\etwo}{e}
\newcommand{\ethree}{\sx}
\newcommand{\Ep}{e}
\newcommand{\sE}{\mathscr{E}}
\newcommand{\listL}{\mathscr{L}}
\newcommand{\listLp}{{\listL_*}}
\newcommand{\IV}{(I_\mu\otimes V)}
\newcommand{\pp}{\rho}
\newcommand{\afflin}{\lambda}
\newcommand{\sx}{x}
\newcommand{\as}{\langle \sx_1,\sx_2 \rangle}
\newcommand{\PL}{[\listL]}
\newcommand{\Bxy}{\mathfrak{B}}
\newcommand{\Mxy}{\mathfrak{M}}
\newcommand{\hatT}{\widehat{T}}
\newcommand{\SB}{S}
\newcommand{\TB}{T}
\newcommand{\JB}{J}
\newcommand{\jpf}{f\!\!\!f}
\newcommand{\VT}{V_T}
\newcommand{\domdag}{\dom^{\ddag}}
\newcommand{\bS}{\mathbb{S}}
\newcommand{\vg}{{\tt{g}}}
\newcommand{\vh}{{\tt{h}}}
\newcommand{\hatdelta}{\widehat{\delta}}
\newcommand{\hatbeta}{\widehat{\beta}}
\newcommand{\vk}{{\tt{k}}}
\newcommand{\tta}{{\tt{a}}}
\newcommand{\ttx}{{\tt{x}}}
\newcommand{\col}{\operatorname{col}}
\newcommand{\XX}{{\tt{X}}}
\newcommand{\Omegap}{\Omega^+}
\newcommand{\df}[1]{{\bf{#1}}{\index{#1}}}
\title[Noncommutative partially convex rational functions]{Noncommutative partially convex rational functions}
\author[M.T. Jury]{Michael Jury${}^1$}
\address{Michael Jury, Department of Mathematics\\
  University of Florida\\ Gainesville }
\email{mjury@ufl.edu}
\thanks{${}^1$Research Supported by NSF grant DMS-1900364.}
\author[I. Klep]{Igor Klep${}^{2}$}
\address{Igor Klep, Faculty of Mathematics and Physics, 
University of Ljubljana, Slovenia}
\email{igor.klep@fmf.uni-lj.si}
\thanks{${}^2$Supported by the 
Slovenian Research Agency grants J1-2453, J1-8132 and P1-0222. Partially supported by the 
Marsden Fund Council of the Royal Society of New Zealand.}
\author[M.E. Mancuso]{Mark E. Mancuso${}^3$}
\address{Mark E. Mancuso, Department of Mathematics and Statistics, 
  Washington University in St. Louis
  }
\email{mark.mancuso@wustl.edu}
\thanks{${}^3$Research partially supported by NSF grant DMS-1565243}
\author[S. McCullough]{Scott McCullough${}^4$}
\address{Scott McCullough, Department of Mathematics\\
  University of Florida\\ Gainesville 
   }
   \email{sam@math.ufl.edu}
\thanks{${}^4$Research supported by  NSF grants DMS-1361501 and DMS-1764231}
\author[J.E. Pascoe]{James Eldred Pascoe${}^5$}
\address{James E. Pascoe, Department of Mathematics\\
  University of Florida\\ Gainesville 
   }
   \email{pascoej@ufl.edu}
\thanks{${}^5$Partially supported by NSF MSPRF DMS 1606260.}
\subjclass[2010]{46N10, 26B25 (Primary); 47A63, 52A41, 90C25 (Secondary)}
\keywords{partial convexity, biconvexity, bilinear matrix inequality (BMI), noncommutative rational function,
noncommutative polynomial, realization theory}
\numberwithin{equation}{section}
\begin{document}
\fontsize{11.5pt}{13.5pt}\selectfont
\begin{abstract}
Motivated by classical notions of bilinear matrix inequalities (BMIs) and partial convexity, 
this article investigates partial convexity for noncommutative functions.
It is shown that noncommutative rational functions that are partially convex
 admit novel butterfly-type realizations that necessitate square roots. 
A strengthening of partial convexity arising in connection
 with BMIs -- $xy$-convexity --  is also considered.
 A characterization of $xy$-convex polynomials is given.
\end{abstract}


\maketitle

\section{Introduction}

Convexity and its matricial analogs arise naturally in many mathematical and engineering 
contexts. A function $f:[a,b]\to\R$ is convex if 
\[
f\left(\frac{x+y}2\right) \leq \frac12\left(f(x)+f(y)\right)
\]
for all $x,y\in[a,b]$. Convex functions have good 
optimization properties. For example, local minima are global, making them highly desirable in applications.
The dimension-free or scalable matrix analog of convexity appears in many modern applications, such as 
linear systems engineering \cite{boyd,SIG}, 
wireless communication \cite{JB07}, matrix means \cite{And89,And94,Han81}, 
perspective functions \cite{Eff09,ENE11},
random matrices and free probability \cite{GS09}
and noncommutative function theory \cite{DK+,HMV06,HM04,DHM,BM}.
Often in systems engineering \cite{ohmp} 
problems have two classes of variables: known unknowns $a=(a_1,\ldots,a_\tg)$ and
unknown unknowns  $x=(x_1,\ldots,x_\vg)$. Linear system problems specified 
by a signal flow diagram naturally give rise to matrix inequalities $p(a,x)\succeq0$,
 where $p$ is a  polynomial, or more generally a rational function, 
 in freely noncommuting variables.  
The $a$ variables represent system parameters whose size, which can be large, depends
 upon the specific problem.  The $x$ variables represent the design variables.
 A key point is that  $p(a,x)$ depends only upon the
 signal flow diagram.  Thus a choice of a value $A$ for $a$  corresponds to 
 a specific problem  governed by the given signal flow diagram and
 in that sense $a$ is a known unknown.  One then chooses
 the design variable $X$ to optimize an  objective and 
 in that sense $x$ is an unknown unknown. 
  Partial convexity in the unknown unknowns
 $x$ is then sufficient for reliable numerics and optimization.

A function $f:(-1,1)\to\R$
is matrix convex if 
\[
f\left(\frac{X+Y}2\right) \preceq \frac12\left(f(X)+f(Y)\right)
\]
for all hermitian matrices $X,Y$ with spectrum in $(-1,1)$.
Matrix convex functions are automatically real analytic and 
admit analytic realizations, such as the famous Kraus formula \cite{Kra36,Bha97}
\begin{equation}\label{eq:kraus}
f(x)=a+bx+\int_{-1}^1 \frac{x^2}{1+tx}\,d\mu,
\end{equation}
where $a,b\in\R$ and $\mu$ is a finite Borel measure on $[-1,1]$. 
Conversely, functions of the form  \eqref{eq:kraus}
 are readily seen to be matrix convex on $(-1,1).$ As an  example, 
 the Kraus formula \eqref{eq:kraus} in
 conjunction with the asymptotics at infinity  shows that
$x^2$ is matrix convex, but $x^4$ is not. 

In the noncommutative multivariable setting one considers noncommutative (nc) polynomials, 
rational functions and their generalizations. An nc polynomial is a linear combination of
 words in the freely noncommuting  letters $x=(x_1,\ldots,x_\gv)$.  For example,
\begin{equation}\label{eq:polyP}
p(x)=x_1x_2-17x_2x_1+4
\end{equation}
 is a nc (or free) polynomial. 
 Noncommutative polynomials are  naturally evaluated at tuples of matrices of any size. 
 For instance, to evaluate  $p(x)$ from \eqref{eq:polyP}
 on
 \[
X_1=\begin{pmatrix}
1 & 2 \\ 3 & 4
\end{pmatrix},
\qquad
X_2=\begin{pmatrix}
-1 & -1 \\ -1 & -1
\end{pmatrix},
 \]
 we substitute $X_i$ for the variable $x_i$, that is,
\[
p(X_1,X_2)=X_1X_2-17X_2X_1+4I_2=\begin{pmatrix}
 69 & 99 \\
 61 & 99 \\
\end{pmatrix}.
\]
More generally, an nc rational function is a
syntactically valid expression involving $x, +, \cdot, ( )^{-1}$ and scalars. 
Thus
\[
r(x)=1+(x_1-x_2(x_1x_2-x_2x_1)^{-1})^{-1}
\]
is an example of a nc rational function. It is evaluated
at a tuple $X=(X_1,X_2)$ of $n\times n$ matrices
 for which $X_1X_2-X_2X_1$ is invertible
 and in turn $X_1-X_2(X_1X_2-X_2X_1)^{-1}$ is invertible in the
 natural way to output an $n\times n$ matrix $r(X).$
A  nc rational function $r$ is \df{symmetric} if $r(X)=r(X)^*$ for all
hermitian tuples $X$ in its domain.

Matrix convexity for multivariate nc functions is now well understood. 
Analogs of the Kraus representation,
the so-called butterfly realizations,
were obtained in \cite{HMV06} for rational functions
and in \cite{PTD} for more general nc functions.
There is a paucity of matrix convex polynomials: 
as first observed in \cite{HM04}  they are of degree at most two. 

A main result of this paper, Theorem~\ref{t:wurzelificationINTRO},
is  an analog of the Kraus representation for 
partially convex
nc rational functions. 
Specialized to polynomials, our results extend and generalize results of \cite{HHLM}.
Moreover, we also investigate 
the stronger notion of  $xy$-convexity, modeled on the theory of bilinear matrix inequalities (BMIs) \cite{KSVS04}.

\subsection{Main results}
For positive integers $\vk$ and $n$, let \df{$\bbS_n^\vk = \bbS_n^\vk(\C)$} denote the $\vk$-tuples
of $n\times n$ hermitian matrices over $\C.$  A
 subset $\cD=(\cD_n)_n$ of $\bS^{\vk}$ is 
 a sequence of sets such that $\cD_n \subseteq \bS_n^{\vk}.$
 This subset is 
\df{free}, or a \df{free set},  if it is \df{closed under direct sums}
 and {\bf unitary conjugation}: \index{closed under unitary conjugation}
 if $Y\in\cD_m,$ $X\in\cD_n,$ and $U$ is an $n\times n$ unitary matrix, then 
\[
\begin{split}
 X\oplus Y & := \begin{pmatrix} X_1\oplus Y_1,  & \cdots, & X_{\vk}\oplus Y_{\vk}\end{pmatrix}
  = \left ( \begin{pmatrix} X_1&0\\0&Y_1\end{pmatrix},  \,  \cdots,  \, 
       \begin{pmatrix} X_{\vk}&0\\0&Y_{\vk}\end{pmatrix} \right )\in\cD_{n+m},
\\
U^*XU & := (U^*X_1U,\ldots,U^*X_{\vk}U)\in\cD_n.
\end{split}
\]
It is open if each $\cD_n$ is open. (In general adjectives 
 such as open and connected apply term-wise to $\cD$.)

Since we are dividing our freely noncommuting variables into 
 two classes $a=(a_1,\dots,a_\tg)$ and $x=(x_1,\dots,x_{\vg}),$
 where $\vg$ and $\tg$ are positive integers, we take
 $\vk=\tg + \gv$ and 
 let $\bS^{\vk}=\bS^{\tg}\times \bS^{\gv}=(\bS_n^{\tg}\times\bS_n^{\gv})_n.$
 We express elements of $\bS^{\vk}_n$ as $(A,X),$ where
 $A\in \bS^{\tg}$ and $X\in \bS^{\gv}.$

The symmetric version \cite[Proposition 4.3]{HMV06} of the well-known 
Schützenberger \cite{Scu61}
state space similarity theorem  implies that a symmetric nc rational function $r(a,x)$ 
 that is \df{regular}  at the origin (has $0$ in
 its domain)  admits a 
 symmetric realization
\begin{equation}\label{eq:ratsSMRintro}
r(a,x)=c^*\big(J -\sum_{i=1}^\gv T_i x_i-\sum_{j=1}^\tg S_ja_j\big)^{-1}c,
\end{equation}
where, for some positive integer $e$,  the $e\times e$ matrix 
  $J$ is a signature matrix ($J^2=I,\, J^*=J$), the $e\times e$
 matrices  $S_j,T_i$ are hermitian and $c\in \C^e.$
 In the case $e$ is the smallest such positive integer,  the \index{SMR}
 resulting realization is a \df{symmetric minimal realization (SMR) of size $e$}.
 Any two SMRs that determine the same rational function are
 similar as explained in more detail
 in Subsection \ref{sec:prelims}. In particular, the definitions and results 
  here  stated in terms of an SMR do not depend upon the  choice
 of SMR.  The results of \cite{volcic,KVVdomain} justify defining 
 the \df{domain} of $r$ as  \index{$\dom r$}
\begin{equation}
\label{def:domr}
 \dom r = \{(A,X)\in \bbS^{\tg}\times\bbS^\vg: 
   \det \Big(J\otimes I-\sum_{i=1}^\gv T_i \otimes X_i-\sum_{j=1}^\tg S_j\otimes 
   A_j\Big) \ne 0\}.
\end{equation}
 In particular, the domain of a rational function is a free open set. 
Let $\fftwo$ \index{$\fftwo$}
 denote the set of rational functions in the variables $a$ and $x.$

\subsubsection{The domain of partial convexity}
An nc rational function $r$
 is  \df{matrix convex in $x$} or \df{partially convex} on $\cD$ if
 \[
r\left(A,\frac{X+Y}2\right) \preceq \frac12\left(r(A,X)+r(A,Y)\right)
 \]
 whenever $(A,X),(A,Y),(A,\frac{X+Y}{2})\in\cD$. Sublevel  
 sets of such functions have matrix convexity
 properties, which we do not discuss here save to  note that 
 these sublevel sets are very important in real 
and convex algebraic geometry, polynomial optimization, and the rapidly emerging 
subject of noncommutative function theory \cite{SSS,Pop,PSS,PS,KVV,HMannals,HL,HKMjems,HKM13,EE+,EE18,DDSS,BMV}.

 Our first main theorem gives an effective easily computable 
 criterion to determine  where $r$ is convex in $x.$ 
 To state this result,  let \df{$\VT$}   denote the inclusion
 of the span of the ranges of the $T_j$ into $\C^e$ and let \index{$R_T$}
\begin{equation}
 \label{def:RsubT}
 R_T(a,x) = \VT^* \, \left (J-\sum_{i=1}^\gv T_i x_i
    -\sum_{j=1}^\tg S_ja_j \right)^{-1}\, \VT.
\end{equation}
Finally, let  \index{$\domp r$}
\begin{equation}
 \label{def:domplus}
   \domp r:=\{(A,X)\in \dom r: R_T(A,X)\succeq 0\}.
\end{equation}

 Given $\cD\subset \bbS^{\hv} \times \bbS^{\gv}$ and $A\in \bS_k^{\tg},$  
\begin{equation}
 \label{e:sliceA}
   \cD[A]=\{X\in \bS_k^{\gv}: (A,X)\in\cD\}.
\end{equation}
A free set $\cD$ is convex (resp. open) in $x$ if  \df{$\cD[A]$} is convex (resp. open)
 for each $A\in \bS^{\tg}.$  \index{convex in $x$} \index{open in $x$}
 Theorem~\ref{t:characterINTRO} below, which is proved as Theorem~\ref{t:character}, says that $\domp r$ 
 deserves the moniker, the {\it  domain of partial convexity of $r.$}
 Generally, a free set $\cD$ is a \df{domain of partial convexity}
 for $r$ if $\cD$ is open in $x,$ convex in $x,$ and $r$
 is convex in $x$ on $\cD.$ It is a \df{\robust domain of partial convexity}
 if in addition $\cD$ contains
 a free open set $\cU$ with $\cU_1\ne \emptyset.$

\begin{theorem}\label{t:characterINTRO}
 The set $\domp r$ is a domain of partial convexity for $r.$ 

 Conversely, if 
 $\cD\subset \dom r$ is a \robust domain of partial convexity for $r,$
  then $\cD\subset\domp r$ and $\domp r$ is also a \robust domain
 of partial convexity for $r.$
\end{theorem}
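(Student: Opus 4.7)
The proof is driven by the block structure of the SMR~\eqref{eq:ratsSMRintro} induced by the decomposition $\C^e = \ran \VT \oplus (\ran \VT)^\perp.$ Each $T_i$ is hermitian with range in $\ran \VT,$ forcing the block form $T_i = \bigl(\begin{smallmatrix}\tilde T_i & 0 \\ 0 & 0\end{smallmatrix}\bigr)$ with $\tilde T_i$ hermitian. Writing $W(a):=J-\sum_j S_j a_j = \bigl(\begin{smallmatrix}W_{11}(a) & W_{12}(a) \\ W_{21}(a) & W_{22}(a)\end{smallmatrix}\bigr),$ the only $x$-dependence of $L(a,x)$ lives in the $(1,1)$ block. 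On the open set where $W_{22}(a)$ is invertible, a Schur-complement computation yields
\[
 R_T(a,x) \;=\; \bigl(N(a) - \tilde T(x)\bigr)^{-1}, \qquad r(a,x) \;=\; \psi(a) + \Lambda(a)^*\, R_T(a,x)\, \Lambda(a),
\]
where $N(a) := W_{11}(a) - W_{12}(a)W_{22}(a)^{-1}W_{21}(a),$ $\tilde T(x) := \sum_i \tilde T_i x_i$ is affine in $x,$ and $\psi(a),\Lambda(a)$ are independent of $x.$ These identities extend throughout $\dom r$ by a resolvent/continuity argument.

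\textbf{Forward direction.} On $\domp r,$ $R_T(A,X) \succeq 0$ combined with invertibility of $L(A,X)$ forces the affine hermitian pencil $N(A)-\tilde T(X)$ to be strictly positive definite; this is a condition on $X$ that is simultaneously open and convex, yielding that $\domp r[A]$ is open in $X$ and convex in $X.$ Closure of $\domp r$ under direct sums and unitary conjugation is immediate from the tensorial evaluation rule for the SMR. Partial convexity of $r$ in $x$ then follows because $R_T(a,x)=(N(a)-\tilde T(x))^{-1}$ is operator convex in $x$ on $\domp r$ (the inverse of a positive affine pencil—equivalently the parallel-sum inequality), and sandwiching by the $x$-independent $\Lambda(a)$ preserves convexity, while the summand $\psi(a)$ is constant in $x.$

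\textbf{Converse direction.} Let $\cD$ be a \robust domain of partial convexity. Openness of $\cD[A]$ together with convexity of $r(A,\,\cdot\,)$ forces the $x$-Hessian of $r$ to be positive semidefinite on $\cD.$ Direct differentiation of $r(a,x) = c^* L(a,x)^{-1} c$ in direction $H=(H_1,\dots,H_{\vg})$ gives
\[
 \tfrac{d^2}{dt^2}\Big|_{t=0} r(A,X+tH) \;=\; 2\, c^* L^{-1} T_H L^{-1} T_H L^{-1} c \;=\; 2\,\xi_H^*\, R_T(A,X)\, \xi_H,
\]
where $T_H := \sum_i T_i\otimes H_i$ and, since $T_H L^{-1}(c\otimes I)$ lies in $\ran\VT\otimes\C^n$ by the block structure of each $T_i,$ the quantity $\xi_H$ denotes its unique preimage under $\VT\otimes I.$ Partial convexity therefore yields $\xi_H^*\, R_T(A,X)\, \xi_H \succeq 0$ for every hermitian $H.$ To upgrade this to $R_T(A,X)\succeq 0$ one argues that the collection $\{\xi_H : H\}$ spans all of $\ran\VT\otimes\C^n$: minimality of the SMR provides a controllability/observability spanning property for the tuple $(T_1,\dots,T_{\vg},c),$ while fullness of $\cD$ supplies—via direct sums with the free open set $\cU\subset\cD$—tuples of arbitrarily large size at which this spanning can be exercised. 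The inclusion $\cD\subset\domp r$ follows, and the ``also \robust'' claim is then immediate since $\domp r\supset\cD\supset\cU.$

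\textbf{Main obstacle.} The decisive step is the last one: parlaying pointwise Hessian positivity, which a priori only controls $R_T$ along the special test vectors $\xi_H,$ into the full operator inequality $R_T(A,X)\succeq 0.$ This requires genuinely exploiting both the minimality of the SMR (to rule out ``hidden'' modes in the compression to $\ran\VT$) and the free-set hypothesis of fullness (to enlarge the pool of admissible matrix tuples by direct sums); neither ingredient is optional. The ancillary forward-direction subtlety—extending the Schur-complement identity across points where $W_{22}(a)$ degenerates—is handled cleanly by a resolvent/continuity argument and is technical but not conceptually difficult.
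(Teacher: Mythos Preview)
Your architecture matches the paper's (block decomposition along $\ran \VT$, Hessian formula, spanning via minimality and direct sums for the converse), and your converse sketch is essentially correct---it parallels Proposition~\ref{l:character} and the CHSY-type Lemma~\ref{l:beyondCheesy}.

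The real gap is in the forward direction, precisely the step you dismiss as ``ancillary.'' The Schur-complement identity $R_T(a,x) = (N(a)-\tilde T(x))^{-1}$ holds only when $W_{22}(a)$ is invertible, and nothing in the SMR forces this on $\dom r$ (already $J_{22}=W_{22}(0)$ need not be invertible). The claimed ``resolvent/continuity argument'' does not extend the identity: at a degenerate $A$ the matrix $N(A)$ is undefined, and more to the point $R_T(A,X)$ then carries a \emph{nontrivial kernel}, so your inference ``$R_T\succeq 0$ plus $L$ invertible $\Rightarrow$ strict positivity of an affine pencil in $X$'' has no target. Continuity in $A$ cannot help either, because openness and convexity of $\domp r[A]$ are assertions at the fixed singular $A$. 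The paper's remedy (Lemma~\ref{l:Omega1}) is a \emph{second} Schur-complement reduction: one splits off $\ker W_{22}(A)$, takes the Schur complement of the remaining invertible part, and obtains a strictly \emph{smaller} subspace $\cH$ together with a self-adjoint $F$ such that membership in $\domp r[A]$ is equivalent to strict positivity of an affine-in-$X$ expression restricted to $\cH$. That strict inequality on the reduced space is what yields openness and convexity simultaneously; it is a genuinely different description at degenerate $A$, not a limit of your nondegenerate formula. (Your argument that $r$ is convex in $x$ on $\domp r$ also routes through the suspect identity, but that piece is easily repaired by invoking the Hessian formula~\eqref{e:rxx} directly, which is valid on all of $\dom r$.)
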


\subsubsection{The root butterfly realization: a certificate of partial convexity}
Our second  main theorem,  the {\it root butterfly realization,}
gives an algebraic certificate for partial convexity near 
points in the domain of $r$ of the form $(A,0).$ 
This realization differs from  existing realizations in that
it contains a square root that  appears difficult to avoid.
A free set $\cD$ is a \df{\kebab}
 set if $(A,X)\in \cD$ implies  $(A,0)\in\cD.$ 
 We denote the positive (semidefinite) square root of
 a positive (semidefinite) matrix $P$ by \df{$\sqrt{P}.$} 
 A \kebab free set $\cD$ is a \df{\kebab domain of convexity for $r$}
 provided $\cD$ is open in $x,$ convex in $x,$ 
 and if $r$ is convex in $x$ on $\cD.$ If in addition
 $\cD$ contains a free open set $\cU$ with $\cU_1\ne \emptyset,$
 then $\cD$ is a \df{\robust \kebab domain of convexity}.

 The \df{\kebab domain} of $r$ is the set \index{$\domkeb$}
\[
 \domkeb r = \{(A,X) \in \dom r: (A,0)\in\dom r\}.
\]
 Let \index{$\domkebp$}
\[
 \domkebp r =\{(A,X)\in\domp r: (A,0)\in\domp r\}
\]
 Theorem~\ref{t:wurzelificationINTRO}  gives a realization tailored
 to partial convexity that provides an algebraic
 certificate of convexity in $x$ for an $r\in \fftwo.$
Given a subset $\cD\subset \bbS^{\tg}\times\bbS^{\gv},$
let  \index{$\pi_a$}
\begin{equation}
\label{e:pisuba}
\pi_a(\cD)=\{A\in\bbS^{\tg}: (A,X) \in \cD \mbox{ for some } X\in\bbS^{\gv}\}.
\end{equation}

\begin{theorem}[Wurzelschmetterlingrealisierung]\label{t:wurzelificationINTRO}
Suppose $r\in\fftwo$ is a nc rational function 
with the \SMR as in \eqref{eq:ratsSMRintro}. Then
\begin{enumerate}[\rm (1)]
 \item 
   $\domkebp r$ is a \kebab domain of convexity for $r;$
\item  if $\cD$ 
 is a \robust \kebab domain of convexity for $r,$  then $\cD\subset \domkebp r$
 and $\domkebp r$ is a also a \robust \kebab domain of convexity for $r;$
\item \label{i:>=0} there exists  a positive integer $k$, a tuple
  $\hatT\in M_k(\C)^\vg,$ and 
a symmetric rational function $w(a)\in \ffa^{k\times k}$   
    defined on 
$\pi_a(\domkeb r)$    such that
\[
  \domkebp r =  \Big\{(A,X)\in \domkeb r: w(A)\succeq 0 , \ \ 
 I-  \sqrt{w(A)}\, \left [\sum_{i=1}^{\vg}  \hatT_i  \otimes X_i\right ]
  \sqrt{w(A)} \succ 0\Big\};
\]
 \item   there exists  a rational function $\ell(a,x)\in \fftwo^{k\times 1},$
   defined on $\domkeb r$ and  linear in $x;$   and
 a symmetric rational function  $\jpf(a,x)\in \fftwo,$ defined on 
    $\domkeb r$ and  affine linear in $x$
 such that  $r$ 
 admits the following realization, valid on $\domkebp r:$
\begin{equation*} 
r=\ell(a,x)^*\sqrt {w(a)} \, \left 
  (I-\sum \sqrt {w(a)}\hatT_ix_i\sqrt {w(a)}\right )^{-1} \,
    \sqrt {w(a)}\ \ell(a,x) + f\!\!\!f (a,x).
\end{equation*}
\end{enumerate}
\end{theorem}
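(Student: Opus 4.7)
The plan is to deduce parts (1) and (2) as bookkeeping consequences of Theorem~\ref{t:characterINTRO}, and then to derive (3) and (4) together from a single algebraic computation using Woodbury and push-through identities on the \SMR. For (1) and (2), write
\[
\domkebp r \;=\; \domp r \;\cap\; \{(A,X) : (A,0) \in \domp r\}.
\]
The second set depends only on $A$, so its $x$-slices are either all of $\bS^{\vg}$ or empty, making it open, convex in $x$, and trivially kebab; freeness follows from $\domp r$ being closed under direct summands (visible from the block-diagonal structure of the pencil). Intersecting with $\domp r$ and invoking Theorem~\ref{t:characterINTRO} yields (1). For (2), the same theorem forces $\cD \subseteq \domp r$ for any \robust \kebab domain of convexity $\cD$, and the kebab property then forces $(A,0) \in \cD \subseteq \domp r$ whenever $(A,X) \in \cD$, so $\cD \subseteq \domkebp r$ with robustness preserved.

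For the algebraic core of (3) and (4), set $L(a) := J - \sum_j S_j a_j$ and factor $T_i = \VT \hatT_i \VT^*$ with $\hatT_i := \VT^* T_i \VT \in M_k(\C)$ hermitian. Define
\[
w(a) := \VT^* L(a)^{-1} \VT, \qquad g(a) := \VT^* L(a)^{-1} c,
\]
both rational in $a$ on $\pi_a(\domkeb r)$. The Woodbury identity applied to $\Lambda(a,x) = L(a) - \VT \hatT(x) \VT^*$ gives
\[
\Lambda^{-1} \;=\; L^{-1} + L^{-1} \VT \, \hatT(x) \, (I - w \hatT(x))^{-1} \VT^* L^{-1},
\]
while the push-through identity $A(I - BA)^{-1} = (I - AB)^{-1} A$ applied with $A = \sqrt{w(a)}$, $B = \sqrt{w(a)} \hatT(x)$ furnishes, whenever $w(a) \succeq 0$,
\[
R_T(a,x) \;=\; w(a)(I - \hatT(x) w(a))^{-1} \;=\; \sqrt{w(a)} \, \bigl(I - \sqrt{w(a)} \hatT(x) \sqrt{w(a)}\bigr)^{-1} \sqrt{w(a)}.
\]

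For (3), write $M(A,X) := I - \sqrt{w(A)} \hatT(X) \sqrt{w(A)}$. If $w(A) \succeq 0$ and $M(A,X) \succ 0$, then $R_T(A,X) = \sqrt{w(A)} M(A,X)^{-1} \sqrt{w(A)} \succeq 0$ and $R_T(A,0) = w(A) \succeq 0$, so $(A,X) \in \domkebp r$. The converse -- the main obstacle -- requires upgrading the weak positivity $R_T(A,X) \succeq 0$ to the \emph{strict} positivity $M(A,X) \succ 0$. Given $(A,X) \in \domkebp r$, convexity of $\domp r[A]$ (Theorem~\ref{t:characterINTRO}) places the whole segment $\{(A, tX) : t \in [0,1]\}$ in $\domp r \subseteq \dom r$; along it $M(A, tX)$ is hermitian and invertible, since $I - \hatT(tX) w(A)$, $I - w(A) \hatT(tX)$, and $M(A, tX)$ share their nonzero spectra, and invertibility of the realization pencil at $(A, tX)$ forces invertibility of $I - \hatT(tX) w(A)$. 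Since $M(A, 0) = I \succ 0$ and no eigenvalue of a continuous invertible hermitian path can cross $0$, $M(A, X) \succ 0$.

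For (4), substitute the Woodbury expansion into $r = c^* \Lambda^{-1} c$, apply $(I - w \hatT(x))^{-1} = I + w \hatT(x)(I - w \hatT(x))^{-1}$, and use the further push-through identity $\hatT(x)\, w(I - \hatT(x) w)^{-1}\, \hatT(x) = \hatT(x) \sqrt{w} \, M^{-1} \sqrt{w}\, \hatT(x)$ to obtain
\[
r(a,x) \;=\; c^* L^{-1} c + g^* \hatT(x) g + \bigl(\hatT(x) g\bigr)^* \sqrt{w} \, M^{-1} \sqrt{w} \, \bigl(\hatT(x) g\bigr).
\]
Setting $\ell(a,x) := \hatT(x) g(a) = \sum_i \hatT_i g(a) x_i$ (linear in $x$) and $\jpf(a,x) := c^* L(a)^{-1} c + g(a)^* \hatT(x) g(a)$ (affine linear in $x$, symmetric because $L(a)^{-1}$ and the $\hatT_i$ are hermitian) yields the claimed \emph{Wurzelschmetterlingrealisierung}.
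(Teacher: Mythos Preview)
Your proof is correct and takes essentially the same route as the paper's proof of Theorem~\ref{t:wurzelification}: parts (1)--(2) via Theorem~\ref{t:characterINTRO}, and parts (3)--(4) via the key identity $R_T(a,x)=w(a)\bigl(I-\hatT(x)w(a)\bigr)^{-1}$ followed by a push-through to insert the square roots, with the same continuity-along-the-segment argument to upgrade to $M(A,X)\succ 0$. The only differences are cosmetic---you invoke the Woodbury and push-through identities by name where the paper uses the caterpillar realization (Proposition~\ref{p:x-cvx}) and the block computation~\eqref{e:ref1-1}---and one small slip: your expansion of $\ell$ should read $\ell(a,x)=\sum_i \hatT_i\, x_i\, g(a)$ (with $x_i$ to the left of $g(a)$, as in the paper's equation~\eqref{e:ell}), which is what $\hatT(x)g(a)$ actually equals.
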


As a corollary
 we  obtain the following simple representation for polynomials that are convex in $x$.
We use \index{$\fatwo$} to denote the set of noncommutative polynomials in $(a,x)$.

\begin{corollary}[{\cite[Proposition~3.1]{HHLM}}]
\label{c:xcvxINTRO}
 Suppose $\cD$ is a  free set that is open in $x,$ convex in $x$ 
 and contains a free open set $\cU$ such that $\cU_1\ne\emptyset.$ 
A polynomial $p(a,x)$ is convex in $x$ on $\cD$ 
if and only if there exists $\ell(a,x)\in\fatwo$ that
is linear in $x$, and a symmetric $w(a)\in\faa$ 
that is positive semidefinite  on $\pi_a(\cD)$ such that 
\begin{equation*} 
p=\ell(a,x)^* w(a)\ell(a,x) + f\!\!\!f (a,x),
\end{equation*}
where $f\!\!\!f (a,x)\in\fatwo$ is affine linear in $x$  and symmetric.
In particular, if $p$ is convex in $x$ on $\cD,$ then
 $p$ is convex in $x$ on $\pi_a(\cD)\times\bbS^{\gv}.$
\end{corollary}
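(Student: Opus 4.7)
The proof has two directions. The ``if'' direction is an elementary convexity verification; the ``only if'' direction is obtained by specializing Theorem~\ref{t:wurzelificationINTRO} to polynomials and collapsing the resulting Neumann series.

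For the ``if'' direction, fix $A \in \pi_a(\cD)$ and hermitian $\gv$-tuples $X, Y$ of matching size. Linearity of $\ell$ in $x$ and affine linearity of $\jpf$ in $x$ give
\[
\ell\bigl(A, \tfrac{X+Y}{2}\bigr) = \tfrac{1}{2}\bigl(\ell(A,X) + \ell(A,Y)\bigr),
\]
together with an analogous identity for $\jpf$. Since $V \mapsto V^* w(A) V$ is matrix convex whenever $w(A) \succeq 0$, one obtains
\[
p\bigl(A, \tfrac{X+Y}{2}\bigr) \preceq \tfrac{1}{2}\bigl(p(A, X) + p(A, Y)\bigr).
\]
As $w(A) \succeq 0$ on all of $\pi_a(\cD)$, this establishes convexity of $p$ in $x$ on $\pi_a(\cD) \times \bbS^{\gv}$, yielding simultaneously the ``if'' direction and the final sentence of the corollary.

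For the ``only if'' direction, apply Theorem~\ref{t:wurzelificationINTRO} to $p \in \fftwo$. Since $p$ is a polynomial, $\dom p = \domkeb p = \bbS^{\tg} \times \bbS^{\gv}$, and Theorem~\ref{t:characterINTRO} combined with the hypothesis on $\cD$ forces $\cD \subset \domkebp p$, with $\domkebp p$ itself a \robust \kebab domain of convexity for $p$. Setting $M(x) := \sum_i \hatT_i x_i$ and using the formal identity $\sqrt{w}\,(I - \sqrt{w} M \sqrt{w})^{-1}\sqrt{w} = w\,(I - Mw)^{-1}$, the wurzel realization rewrites as the Neumann series
\[
p - \jpf \;=\; \ell^* w\,(I - M(x)\, w)^{-1} \ell \;=\; \sum_{k=0}^{\infty} \ell^* w\,(M(x)\, w)^k \ell,
\]
in which the $k$-th summand is homogeneous of degree $k + 2$ in $x$. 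Since $p - \jpf$ is a polynomial of finite $x$-degree, and partially convex polynomials have $x$-degree at most $2$ (the analog of the Helton--McCullough bound from \cite{HM04}, obtained by slicing $a$ to a generic scalar value and reducing to the matrix convex case in $x$ alone), all terms with $k \geq 1$ must vanish, yielding $p = \ell^* w\,\ell + \jpf$.

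The principal remaining obstacle is upgrading $\ell, w, \jpf$ from rational to genuinely polynomial objects. Since $\jpf$ is the affine-in-$x$ part of the polynomial $p$, it is automatically polynomial. For $\ell$ and $w$, decompose $p_2 := p - \jpf$ via its Gram (or ``middle'') matrix: write $p_2(a,x) = V(a,x)^* W(a) V(a,x)$, where $V$ is a column of $x$-degree-one monomials with polynomial $a$-coefficients and $W(a)$ is a symmetric polynomial matrix in $a$. The convexity of $p$ in $x$ on $\cD$, expressed via positivity of the noncommutative Hessian in the $x$-directions, forces $W(a) \succeq 0$ on $\pi_a(\cD)$; setting $\ell := V$ and $w := W$ completes the proof. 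This Gram-extraction / middle-matrix step is the most delicate piece of the argument and is the genuine polynomial analog of the classical Kraus-type characterization of quadratic convex functions.
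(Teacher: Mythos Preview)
Your ``if'' direction is fine. For ``only if'' there are two real gaps, and the route you take ultimately makes the wurzel realization superfluous.

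\textbf{The scalar-slicing degree bound does not work.} Fixing $a$ at a scalar $a_0$ and applying \cite{HM04} to $p(a_0,\cdot)$ bounds only the $x$-degree of the image of $p$ under abelianization in $a$. A polynomial such as $(a_1a_2-a_2a_1)\,q(a,x)$ dies at every scalar $a_0$ without being zero. The $x$-degree bound for partially convex polynomials genuinely requires matrix values of $a$ and is not a corollary of \cite{HM04}; in this paper it is a \emph{consequence} of the realization machinery, not an input to it.

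\textbf{The Gram-matrix positivity step is the actual content, and you do not prove it.} After writing $p_2=V(a,x)^* W(a)V(a,x)$ you assert $W(A)\succeq 0$ on $\pi_a(\cD)$ ``via positivity of the Hessian.'' That inference is exactly a CHSY/middle-matrix argument (cf.\ Lemma~\ref{l:beyondCheesy} and Proposition~\ref{l:character}): one must show the border vectors $V(A,H)v$ fill the whole space after amplification, and you have not supplied this. Alternatively you could transfer positivity from the rational $w$ of Theorem~\ref{t:wurzelificationINTRO} via $W=L(a)^*\,w(a)\,L(a)$ for the change of basis $\ell = L\,V$, but you do not do that either. Once you are prepared to run the Gram/CHSY argument from scratch, the wurzel detour does no work and you have essentially reproduced the original \cite{HHLM} proof rather than deduced the corollary from Theorem~\ref{t:wurzelificationINTRO}.

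\textbf{How the paper proceeds.} The paper does not discard the rational $\ell,w,\jpf$ coming from the wurzel realization and rebuild polynomial ones. Because $p$ is a polynomial, the resolvent $R(a,x)$ of its minimal SMR is itself a matrix polynomial by \cite[Corollary~3.4]{klep-volcic}; hence $w(a)=\VT^*R(a,0)\VT$, $\ell(a,x)=\VT^*\sum T_ix_i R(a,0)c$ and $\jpf$ are \emph{already} polynomials. After translating so that $(0,0)\in\cD$, joint nilpotency of $TJ$ (again \cite{klep-volcic}) forces $\sqrt{J_{11}}\,\hatT_j\,\sqrt{J_{11}}=0$, collapsing the wurzel resolvent and yielding \eqref{e:wurzpoly} directly---no a priori degree bound is needed. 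Positivity of $w$ on $\pi_a(\cD)$ then comes for free from Theorem~\ref{t:characterINTRO}: $\cD\subset\domp p$, and for polynomials $\domp p[A]$ is a connected component of $\dom p[A]=\bbS_n^{\gv}$ (Proposition~\ref{p:dompr-inx}), hence all of $\bbS_n^{\gv}$ once nonempty, so $(A,0)\in\domp p$ and $w(A)=R_T(A,0)\succeq 0$.
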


\subsubsection{$xy$-convexity and BMIs}
 In this subsection we preview our results on $xy$-convexity and BMIs.
 Like partial convexity, here we have two classes of variables.
 Unlike partial convexity, the roles of the classes of variables
 appear symmetrically in $xy$-convexity. With that in mind, 
 we switch notation somewhat and consider 
 freely noncommuting letters $x_1,\dots,x_{\vg},y_1,\dots,y_{\vh}.$
 
 An expression of the form
\begin{equation*}
L(x,y) = A_0 + \sum_{j=1}^{\vg} A_j x_j + \sum_{k=1}^{\tg} B_k y_k 
  + \sum_{p,q=1}^{\vg,\tg} C_{pq} x_p y_q +    \sum_{p,q=1}^{\vg,\tg} D_{pq} y_q x_p,
\end{equation*}
 where $A_j,B_k,C_{pq},D_{pq}$ are all matrices of the same size, is an
 \df{$xy$-pencil}. In the case $A_j,B_k$ are hermitian and $D_{pq}=C_{qp}^*,$
 $L$ is a \df{hermitian $xy$-pencil}. If $A_0=I$, then $L$ is \df{monic}.
 For a monic hermitian $xy$-pencil $L$, the inequality $L(X,Y)\succeq 0$
 for $(X,Y) \in \bS^{\vg} \times \bS^{\vh}$ is a 
 \df{bilinear matrix inequality} (BMI) \cite{VAB,GLS96,KSVS04}. 
 Domains $\cD$ defined by BMIs are convex in the $x$ and $y$ variables separately. 

We say a function $f$ of two freely noncommuting variables
 is {\bf $xy$-convex} on a free set $\cD$ if
$f(V^*(X,Y)V)\preceq V^*f(X,Y)V$ for all isometries $V$,
and all $X,Y\in\cD$ satisfying
$V^*(XY)V=(V^*XV)(V^*YV)$. Such a pair $((X,Y),V)$ 
is called an \df{$xy$-pair}. Sublevel sets of $xy$-convex functions
are delineated by (perhaps infinitely many) BMIs as proved in \cite{JKMMP+}.

 Symmetric polynomials in two freely noncommuting variables $x$ and $y$ 
 (so $\vg=1=\vh$)
  that are $xy$-convex essentially arise from BMIs. Here $xy$-convex
 means globally; that is, on all of $\bbS^1 \times \bbS^1.$

\begin{theorem}
\label{t:introxyconvexp}
 Suppose $p$ is a symmetric polynomial in the two freely noncommuting variables 
 $x,y.$ If $p$ is  $xy$-convex, then there exists 
 a hermitian $xy$-pencil $\afflin \in \C\langle x,y\rangle$,
 a positive integer $k$  and an $xy$-pencil 
 $\Lambda\in \C\langle x,y\rangle^{k\times 1}$ such that 
\[
 p = \afflin(x,y) +\Lambda(x,y)^* \Lambda(x,y).
\]
 The converse is easily seen to be true.
\end{theorem}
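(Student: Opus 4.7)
The sufficient direction is a direct computation. For any $xy$-pencil $\Lambda$ and any $xy$-pair $((X,Y),V)$, the commutation $V^*XYV = V^*XV \cdot V^*YV$ lets us pull $V^*$ through the bilinear monomials in $\Lambda$, yielding $V^*\Lambda(X,Y)V = \Lambda(V^*XV,V^*YV)$. Hence
\[
V^*(\Lambda^*\Lambda)V - \Lambda(V^*XV,V^*YV)^*\Lambda(V^*XV,V^*YV) = V^*\Lambda^*(I - VV^*)\Lambda V \succeq 0,
\]
and an affine hermitian $xy$-pencil $\afflin$ satisfies $V^*\afflin(X,Y)V = \afflin(V^*XV,V^*YV)$, so $\afflin + \Lambda^*\Lambda$ is $xy$-convex.

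For the necessary direction, first observe that $xy$-convexity specialises to ordinary partial matrix convexity in each variable. With $V = \tfrac{1}{\sqrt{2}}\begin{pmatrix} I \\ I \end{pmatrix}$ and the block tuple $(X_1\oplus X_2,\, Y\oplus Y)$, the commutation $V^*XYV = V^*XV\cdot V^*YV$ reduces to $(X_1-X_2)(Y-Y)=0$, which holds trivially, and the $xy$-convexity inequality becomes the midpoint convexity $p\bigl(\tfrac{X_1+X_2}{2},Y\bigr) \preceq \tfrac{1}{2}(p(X_1,Y)+p(X_2,Y))$ in $x$ with $Y$ fixed; symmetrically in $y$. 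In particular $\deg_x p, \deg_y p \leq 2$. Applying Corollary~\ref{c:xcvxINTRO} with $a = y$ yields a decomposition
\[
p = \ell(y,x)^* w(y) \ell(y,x) + \jpf(y,x),
\]
with $\ell$ linear in $x$, $w(y)\succeq 0$ on $\bS^1$, and $\jpf$ affine linear in $x$. Choosing a minimal-degree decomposition, the bound $\deg_y p \leq 2$ forces $\ell$ to be at most affine in $y$ and $w$ of degree at most two. A symmetric degree-$\leq 2$ PSD polynomial in one noncommutative variable admits a factorisation $w(y) = S(y)^*S(y)$ with $S$ affine, by the standard Cholesky of the coefficient matrix. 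Setting $\Lambda := S\ell$, a direct multiplication check shows the monomials of $\Lambda$ lie in $\{1,x,y,xy,yx\}$, so $\Lambda$ is a genuine $xy$-pencil, and $\ell^* w \ell = \Lambda^*\Lambda$. Thus $p = \Lambda^*\Lambda + \jpf$ with $\jpf$ affine in $x$.

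Running the analogous argument with $x$ and $y$ swapped gives a second decomposition $p = \tilde\Lambda^*\tilde\Lambda + \tilde\jpf$ with $\tilde\jpf$ affine in $y$. Reconcile the two: subtracting yields $\tilde\Lambda^*\tilde\Lambda - \Lambda^*\Lambda = \jpf - \tilde\jpf$, whose left-hand side is a difference of $xy$-pencil squares and whose right-hand side has the residual quadratic-in-$y$ part of $\jpf$ (respectively the quadratic-in-$x$ part of $\tilde\jpf$) as its only obstruction to being affine in both variables. Enlarging $\Lambda$ by stacking on additional $xy$-pencil columns designed to absorb these quadratic residuals produces a single $xy$-pencil $\Lambda'$ such that $p - (\Lambda')^*\Lambda'$ is affine in both $x$ and $y$ and hermitian, i.e.\ an affine hermitian $xy$-pencil $\afflin$. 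This is the sought representation $p = \afflin + (\Lambda')^*\Lambda'$.

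The main obstacle is this last reconciliation step. Corollary~\ref{c:xcvxINTRO} controls only one degree direction of the residual at a time, and the discrepancy $\tilde\Lambda^*\tilde\Lambda - \Lambda^*\Lambda$ is not a priori a single bilinear square; it is the strict $xy$-convexity hypothesis --- encoded in the nontrivial commutation $V^*XYV = V^*XV\cdot V^*YV$ --- that rules out indefinite contributions and allows the quadratic-in-$y$ part of $\jpf$ to be extracted as a square of a bilinear pencil. Verifying this amounts to a Gram-matrix/sum-of-squares argument in the finite-dimensional space of $xy$-pencils, and the bookkeeping across the two decompositions is the technical crux of the proof.
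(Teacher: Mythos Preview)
Your sufficiency argument is fine, and the reduction to biconvexity via the diagonal isometry is exactly what the paper does (Lemma~\ref{l:xyconveximpliesbiconvex}). The problem lies in the necessity direction after that point.

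First, a technical gap: applying Corollary~\ref{c:xcvxINTRO} with $a=y$ gives $p=\ell(y,x)^* w(y)\ell(y,x)+\jpf$, but the specific $\ell$ produced by the realization has the form $Cx\,D(y)$, and even if you could arrange $\ell$ affine in $y$ and $w$ of degree two, the product $S(y)\ell(y,x)$ will in general contain the monomial $yxy$, which is \emph{not} an $xy$-pencil monomial. One can repair this with a careful choice of border vector---taking $\ell=\begin{pmatrix}x\\xy\end{pmatrix}$ and using that the $(2,2)$ entry of the resulting middle matrix $W(y)$ is the constant $p_{yx^2y}$ forces the offending coefficient of $S$ to vanish---but your write-up does not make this choice and simply asserts the pencil structure.

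The more serious gap is the reconciliation step, which you yourself flag as the crux. Everything prior to it uses only separate convexity in $x$ and in $y$; but biconvexity is strictly weaker than $xy$-convexity, so some genuine use of the commutation hypothesis $V^*XYV=(V^*XV)(V^*YV)$ is mandatory at this stage. Your two decompositions give $\Lambda^*\Lambda$ matching the $x$-degree-two part of $p$ and $\tilde\Lambda^*\tilde\Lambda$ matching the $y$-degree-two part, but these overlap on the bidegree-$(2,2)$ monomials $\{xy^2x,yx^2y,xyxy,yxyx\}$, and there is no mechanism in your outline for producing a single $xy$-pencil $\Lambda'$ whose square simultaneously captures both the $x$-quadratic and the $y$-quadratic parts. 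Saying one ``absorbs quadratic residuals by stacking on columns'' is exactly the hard matrix-completion problem; it requires a compatibility condition among the coefficients $p_w$ that biconvexity alone does not supply.

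The paper's route is genuinely different. It introduces an $xy$-Hessian $\Hxy p$ adapted to $xy$-pairs, extracts a $4\times4$ middle matrix $\Mxy(\beta_1,\beta_2,\delta_0,\delta_1)$ and shows (Proposition~\ref{p:xyconvex-Mpos}) that $xy$-convexity forces $\Mxy\succeq0$ for all inputs---this is where the full hypothesis enters. The resulting positivity is then fed into a nontrivial factorization theorem (Theorem~\ref{l:FstarFisM}) whose proof goes through complete positivity of an auxiliary map on an operator system, Arveson extension, and the Choi matrix, ultimately producing the coefficients $\Lambda_x,\Lambda_y,\Lambda_{xy},\Lambda_{yx}$ with the correct Gram relations. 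This operator-algebraic machinery is what replaces your missing reconciliation, and it does not appear to reduce to an elementary bookkeeping argument.
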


 The notions of partial convexity and $xy$-convexity are two
 instantiations of $\Gamma$-convexity \cite{JKMMP+}. 
 Let $\cD \subset \bS^{\tg}\times \bS^{\gv}$ be a given
 free open set that is also closed with respect to 
  restrictions to reducing subspaces; that is 
  if $(A,X)\in \cD$ and $V$ is an isometry whose range
  reduces each $A_j$ and $X_k$, then $V^*(A,X)V\in \cD.$
  The set $\cD$ is \df{convex in $x$}, or \df{partially convex}, if 
  for each $A\in \bS_k^{\tg}$ the slice $\cD[A]$
 (see \eqref{e:sliceA}) is convex.
  Likewise $\cD$ is \df{$a^2$-convex} if for each $(A,X)\in \cD_n$
  and isometry $V:\C^m\to \C^n$ such that $V^*A^2V=(V^*AV)^2$
  it follows that $V^*(A,X)V\in \cD$. 
  In \cite{JKMMP+} it is shown that $\cD$ is convex in $x$
  if and only if it is $a^2$-convex. 
  A  straightforward
  variation on the proof of that result establishes
  Proposition \ref{p:a2convex} below.
  A rational function $r\in \fftwo$ is \df{$a^2$-convex}
  on $\cD$ if, whenever $(A,X)\in \cD$ and $V:\C^m\to \C^n$ 
 is an isometry such that $V^*A_j^2V=(V^*A_jV)^2$
 and $V^*(A,X)V\in \cD$, we have that
\[
 V^*r(A,X)V \succeq r(V^*(A,X)V).
\]

\begin{proposition}
 \label{p:a2convex}
 If $\cD\subset \bS^{\tg}\times \bS^{\gv}$ is a free set
 that is closed with respect to reducing subspaces and 
  $a^2$-convex, then an $r\in \fftwo$  is $a^2$-convex on $\cD$ 
  if and only if it is  convex in $x$   on $\cD.$
\end{proposition}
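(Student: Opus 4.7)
The strategy is to lift the set-level equivalence of \cite{JKMMP+} (convex in $x$ $\iff$ $a^2$-convex) to the function level: a duplication trick handles the easy direction, and a block-decomposition ``flip'' trick handles the harder direction. Throughout I will use that an nc rational function $r$ respects unitary conjugation and direct sums on its domain, and that the hypotheses on $\cD$ guarantee every intermediate tuple at which $r$ is evaluated lies in $\cD\subseteq \dom r$.

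\emph{$a^2$-convex $\Rightarrow$ convex in $x$.} Given $(A,X),(A,Y),(A,\tfrac{X+Y}{2})\in \cD$ with $A\in \bS_k^{\tg}$, set
\[
 \tA := A\oplus A,\qquad \tX:= X\oplus Y,\qquad V:= \tfrac{1}{\sqrt 2}\begin{pmatrix} I_k\\ I_k\end{pmatrix}.
\]
A direct calculation gives $V^*\tA^2 V = A^2 = (V^*\tA V)^2$ and $V^*(\tA,\tX)V = (A,\tfrac{X+Y}{2})$. Since $(\tA,\tX)\in \cD$ by direct-sum closure and the midpoint lies in $\cD$ by hypothesis, $a^2$-convexity of $r$ applies and yields
\[
 r\!\left(A,\tfrac{X+Y}{2}\right)\preceq V^*r(\tA,\tX)V = \tfrac{1}{2}\big(r(A,X)+r(A,Y)\big),
\]
where the equality uses that $r$ respects direct sums.

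\emph{Convex in $x$ $\Rightarrow$ $a^2$-convex.} Suppose $(A,X)\in \cD_n$ and $V:\C^m\to \C^n$ is an isometry with $V^*A_j^2 V = (V^*A_jV)^2$ for each $j$. The identity
\[
 V^*A_j^2 V - (V^*A_jV)^2 = V^*A_j(I-VV^*)A_jV = \big[(I-VV^*)A_jV\big]^*\big[(I-VV^*)A_jV\big]
\]
forces $(I-VV^*)A_jV = 0$, so $\ran V$ is $A_j$-invariant and, $A_j$ being hermitian, reducing for each $A_j$. Let $W$ be an isometry onto $(\ran V)^\perp$ and form the unitaries $U:=(V\;\;W)$ and $U':=(V\;\;{-W})$. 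Under either, $A$ becomes block diagonal, while the $V$-$W$ off-diagonal block of $X$ changes sign under $U'$. Hence
\[
 \tfrac{1}{2}\big(U^*(A,X)U + U'^*(A,X)U'\big) = V^*(A,X)V \,\oplus\, W^*(A,X)W.
\]
The conjugates $U^*(A,X)U$ and $U'^*(A,X)U'$ lie in $\cD$ by unitary invariance, and their midpoint lies in $\cD$ because each summand $V^*(A,X)V$ and $W^*(A,X)W$ does, by $a^2$-convexity of $\cD$ applied to the isometries $V$ and $W$ respectively. Applying convexity of $r$ in $x$ at the common $a$-value $U^*AU$ and using that $r$ commutes with unitary conjugation and respects direct sums gives
\[
 r(V^*AV, V^*XV)\,\oplus\, r(W^*AW, W^*XW) \preceq \begin{pmatrix} V^* r(A,X)V & 0\\ 0 & W^*r(A,X)W\end{pmatrix};
\]
reading off the $(1,1)$ block yields $r(V^*(A,X)V)\preceq V^* r(A,X)V$, which is $a^2$-convexity of $r$.

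\emph{Main obstacle.} The substantive input in each direction is a single calculation, so the real work is bookkeeping in the harder direction: verifying that every tuple appearing as an argument of $r$ belongs to $\cD$. This uses the full strength of the standing hypotheses on $\cD$ (free, $a^2$-convex, closed under reducing subspaces), but no idea beyond the set-level argument of \cite{JKMMP+} is required.
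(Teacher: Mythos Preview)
Your proof is correct and follows essentially the same approach as the paper's: the duplication/averaging trick for $a^2$-convex $\Rightarrow$ convex in $x$, and the sign-flip unitary to kill the off-diagonal of $X$ for the converse. The only noteworthy difference is that, in the harder direction, you justify membership of the block-diagonal midpoint in $\cD$ directly via $a^2$-convexity of $\cD$ (applied to $V$ and $W$) together with direct-sum closure, whereas the paper invokes the set-level equivalence from \cite{JKMMP+} to use convexity-in-$x$ of $\cD$; your route is slightly more self-contained but otherwise equivalent.
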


 A proof of Proposition~\ref{p:a2convex} appears in
 appendix~\ref{s:a2convex}.

\section{Partial convexity for nc rational function}
\label{sec:partial}
 In this section we consider partial convexity 
of nc rational functions
 and establish
 Theorems \ref{t:characterINTRO}  and \ref{t:wurzelificationINTRO}
 as well as Corollary \ref{c:xcvxINTRO}. 

\subsection{Preliminaries}
\label{sec:prelims}
Proposition~\ref{p:state-space-similarity} below is a version of the well known
 state space similarity theorem
due to Sch\"utzenberger \cite{Scu61}; see also \cite{BMG05} or \cite[Proposition~4.3]{HMV06}.\looseness=-1

\begin{proposition}
\label{p:state-space-similarity}
 If
\[
 q(x) = a^*\left (J-\sum_{j=1}^m  A_j x_j\right)^{-1} a, \quad
 q(x) = b^* \left (K-\sum_{j=1}^m  B_j x_j \right)^{-1}b
\]
 are two SMRs for the same rational function, then 
 there is a unique matrix $S$ such that  $S^*KS=J,$
 $SJA_j= KB_jS$ for $1\le j\le m$ and $SJa=Kb.$ 
\end{proposition}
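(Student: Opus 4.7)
The plan is to reduce the result to the classical (non-symmetric) Sch\"utzenberger state space similarity theorem and then extract the symmetric identity $S^*KS=J$ via a uniqueness argument.

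First, I would convert each symmetric realization into standard non-symmetric form by factoring out the signature matrix. Since $J^2=I$,
\[
q(x) = a^*\bigl(I - {\textstyle\sum_j} JA_j\, x_j\bigr)^{-1} (Ja), \qquad q(x) = b^*\bigl(I - {\textstyle\sum_j} KB_j\, x_j\bigr)^{-1}(Kb).
\]
Since the two SMRs have the same minimal size $e$, and any strictly smaller non-symmetric realization of $q$ could be symmetrized to produce a smaller SMR, these two standard realizations are minimal in the classical (controllable and observable) sense. The classical Sch\"utzenberger theorem then produces a unique invertible matrix $S$ satisfying
\[
S(Ja)=Kb, \qquad a^*=b^*S, \qquad S(JA_j)=(KB_j)S,
\]
establishing two of the three conditions claimed in the proposition (the first and last) and an auxiliary adjoint-type relation used below.

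To obtain $S^*KS=J$, I would introduce $\widetilde{S}:=KS^{-*}J$ and verify that it also satisfies the three defining similarity relations above. A direct computation, using $S^*b=a$ (the adjoint of $a^*=b^*S$), gives $\widetilde{S}(Ja)=KS^{-*}a=Kb$, and $(b^*\widetilde{S})^*=JS^{-1}Kb=J\cdot Ja=a$. Taking the adjoint of $SJA_j=KB_jS$ yields $A_jJS^*=S^*B_jK$, and a short rearrangement then delivers $\widetilde{S}(JA_j)=(KB_j)\widetilde{S}$. The uniqueness clause of the classical theorem forces $\widetilde{S}=S$, i.e.\ $KS^{-*}J=S$, which rearranges to $S^*KS=J$.

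The main obstacle is precisely this symmetric identity $S^*KS=J$: it is not formally implied by the three classical similarity relations alone, and the uniqueness trick applied to the cleverly chosen $\widetilde{S}$ is what converts the symmetry of the two realizations into the desired symmetric intertwining. A secondary technical point is justifying the minimality of the auxiliary non-symmetric realizations, which rests on the equivalence of symmetric and non-symmetric minimality for symmetric nc rational functions.
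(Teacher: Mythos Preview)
Your argument is correct. The rearrangement at the end is valid: from $KS^{-*}J=S$ left-multiply by $K$ (using $K^2=I$) to get $S^{-*}J=KS$, take adjoints to obtain $JS^{-1}=S^*K$, and right-multiply by $S$ to reach $J=S^*KS$. Your three verifications for $\widetilde S$ also check out, using the hermiticity of $J,K,A_j,B_j$ when taking adjoints. One small addition: the uniqueness asserted in the proposition is uniqueness for $S$ satisfying the \emph{three stated} relations, not the classical triple; but any such $S$ automatically satisfies $b^*S=a^*$ (from $S^*KS=J$ one gets $S^*K=JS^{-1}$, and combining with $SJa=Kb$ yields $S^*b=a$), so classical uniqueness applies and gives the proposition's uniqueness as well.

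As for comparison with the paper: the paper does not actually prove this proposition. It is stated as a known version of the Sch\"utzenberger theorem, with references to \cite{Scu61}, \cite{BMG05}, and \cite[Proposition~4.3]{HMV06}, and is then used as a black box. So you have supplied a self-contained proof where the paper simply cites the literature. Your reduction to the non-symmetric theorem together with the $\widetilde S=KS^{-*}J$ uniqueness trick is exactly the kind of argument behind the cited \cite[Proposition~4.3]{HMV06}. The only point that genuinely rests on outside input is your ``secondary technical point'': that a symmetric nc rational function has the same minimal realization size in the symmetric and non-symmetric senses. That is true and is precisely what is established in \cite{HMV06}, so citing it (as the paper implicitly does) is appropriate.
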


 A bit of algebra reveals that $S^*BS=A.$
 Thus  $K-\sum B_j x_j = S^*(J-\sum A_j x_j)S$ and it follows
 that the definitions of  $\dom r$, $\domp r$ and $\domdag r$
 are independent of the choice of SMR.

 Just as in the commutative case, it is well known that 
 convexity properties of a free rational
 functions can be characterized by positivity of a Hessian.
 See for instance \cite{HM98}.
 The \df{$x$-partial Hessian} of an SMR as in equation \eqref{eq:ratsSMRintro}
 is the rational function in $2\vg+\tg$ freely noncommuting variables, \index{$r_{xx}$}
\begin{equation}\label{e:rxx}
\begin{split}
r_{xx}(a,x)[h]&= 2 c^* R(a,x) (\sum_i T_i h_i) R(a,x) (\sum_i T_i h_i) R(a,x) c 
\\ &  = 2 \left [c^* R(a,x) (\sum_i T_i h_i)\right ] \,  R_T(a,x)\,  
 \left [(\sum_i T_i h_i) R(a,x) c \right],
\end{split}
\end{equation}
 where $R$ is the \df{resolvent} \index{$R(a,x)$}
\begin{equation}
\label{def:resolvent}
 R(a,x):= (J -\sum T_j x_j - \sum S_k a_k)^{-1},
\end{equation}
$\Lambda_T[h]=\sum_{j=1}^{\vg} T_j h_j$,
and $R_T(a,x)=\VT^* R(a,x)\VT$ is defined as in \eqref{def:RsubT}.
 Compare with 
\cite[Equation~(5.3)]{HMV06} where the {\it full} Hessian
 of a SMR is computed in detail. The $x$-partial Hessian 
is naturally \df{evaluated}
 at a tuple $(A,X,H)\in \bS^{\tg}\times \bS^{\vg}\times \bS^{\vg}$ where 
 $(A,X)\in \dom r$ 
 with  output a symmetric $k\times k$ matrix.

 Proposition \ref{p:HM98} is the partial convexity analog of
 the  \cite{HM98}
characterization of convexity in terms of Hessians. 
The proof is a straightforward modification of the one 
in \cite{HM98} so is only sketched below.

\begin{proposition}
\label{p:HM98}
 The rational function $r$ is convex in $x$ on a nonempty, open in $x,$ and convex in 
 $x$  set   $S\subset \dom r \cap \left (\bS_k^{\tg}\times \bS_k^{\vg} \right)$
 if and only if 
 $r_{xx}(A,X)[H]\succeq 0$ for all  $(A,X)\in S$ and  $H\in \bS_k^{\gv}.$
\end{proposition}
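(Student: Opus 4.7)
The plan is to follow the standard Hessian-characterization-of-convexity argument for matrix-valued functions, adapted to the partial setting, just as in \cite{HM98}. The key link is that the second directional derivative of the scalar map $t\mapsto\langle r(A,X+tH)v,v\rangle$ agrees (up to a positive constant) with $\langle r_{xx}(A,X)[H]v,v\rangle$, a fact that follows by routine differentiation of the resolvent $R(a,x)$ defined in \eqref{def:resolvent}. Indeed, differentiating $R(A,X+tH)$ twice at $t=0$ via the identity $\partial_t R = R\,\Lambda_T[H]\,R$ produces $2R(A,X)\Lambda_T[H]R(A,X)\Lambda_T[H]R(A,X)$, and sandwiching by $c^*(\cdot)c$ gives exactly formula \eqref{e:rxx}. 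I will take this differentiation calculation as routine.

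For the forward direction, fix $(A,X)\in S$ and $H\in\bS_k^{\vg}$. Since $S$ is open in $x$, the set $\{t\in\R:(A,X+tH)\in S\}$ contains an interval $(-\varepsilon,\varepsilon)$, and by convexity of $S$ in $x$ this interval is moreover convex. For every unit vector $v\in\C^k$, convexity in $x$ of $r$ on $S$ forces the scalar function $\psi_v(t):=\langle r(A,X+tH)v,v\rangle$ to be convex on $(-\varepsilon,\varepsilon)$, hence $\psi_v''(0)\ge 0$. Combining with the derivative computation above, $\langle r_{xx}(A,X)[H]v,v\rangle\ge 0$ for every unit $v$, giving $r_{xx}(A,X)[H]\succeq 0$.

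For the converse, fix $(A,X_0),(A,X_1)\in S$ and set $X_t=(1-t)X_0+tX_1$ with $H=X_1-X_0$. Since $S$ is convex in $x$, $(A,X_t)\in S$ for all $t\in[0,1]$, and since $S$ is also open in $x$ along the segment, $\psi_v(t):=\langle r(A,X_t)v,v\rangle$ extends smoothly to an open neighborhood of $[0,1]$. The same derivative calculation gives $\psi_v''(t)=\langle r_{xx}(A,X_t)[H]v,v\rangle\ge 0$ by hypothesis, so $\psi_v$ is a scalar convex function on $[0,1]$. Midpoint convexity $\psi_v(\tfrac12)\le\tfrac12(\psi_v(0)+\psi_v(1))$ for every unit $v$ yields
\[
r\!\left(A,\tfrac{X_0+X_1}{2}\right)\preceq \tfrac12\bigl(r(A,X_0)+r(A,X_1)\bigr),
\]
as required.

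The only step with any subtlety is the derivative computation producing \eqref{e:rxx}; however, this is a direct application of the resolvent identity $\partial_t(J-\sum T_i(X_i+tH_i)-\sum S_j A_j)^{-1}=R\,\Lambda_T[H]\,R$ and is entirely analogous to the full-Hessian calculation carried out in detail in \cite[Equation~(5.3)]{HMV06}. I therefore anticipate the proof to be short, with most of the work being bookkeeping to check that the formula \eqref{e:rxx}, including the factor of $2$, matches the second derivative of $\phi(t)=r(A,X+tH)$ at $t=0$, and that the reduction to the scalar case $\psi_v$ faithfully captures positive semidefiniteness of the matrix-valued Hessian.
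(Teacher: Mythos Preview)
Your proposal is correct and follows essentially the same approach as the paper's proof: both reduce the matrix-valued convexity to scalar convexity via positive linear functionals (you use the vector states $M\mapsto\langle Mv,v\rangle$, the paper phrases it in terms of arbitrary positive linear functionals $\lambda$), and then appeal to the standard scalar second-derivative test.
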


\begin{proof}[Sketch of proof]
 The rational function $r$ is convex in $x$ on $S$ if and only if for
 each $A\in \bS_k^{\tg}$ and 
 each positive linear functional $\lambda:\bS_k\to \mathbb R$
 the function $f_{A,\lambda}:S\to\mathbb R$ defined by
 $f_{A,\lambda}(X)=\lambda\circ r(A,X)$ is convex. On the other hand,
 $f_{A,\lambda}$ is convex if and only if its Hessian is positive; that
 is
\[
0\le  f^{\prime\prime}_{A,\lambda}(X)[H]
  = \lambda \circ r_{xx}(A,X)[H]
\]
for all $H$. Thus $f_{A,\lambda}$ is convex for 
 each $A$ and positive $\lambda$ if and only if $r_{xx}(A,X)[H] \succeq 0.$  
\end{proof}

\subsection{$\domp r$ is open in $x$ and convex in $x$}
\label{s:inx}
 In this section we show that $\domp r$ is both open in $x$ and convex in $x.$ 
 Let positive integers $m$ and $n,$ a 
 matrix $D\in\bbS_n$ and a matrix $B\in M_{m,n}(\C)$
 be given. Let $V:\C^m\to \C^m\oplus \C^n$ denote the inclusion,
\[
 Vx = \begin{pmatrix} x\\ 0 \end{pmatrix} 
 \in %
 \C^m\oplus\C^n.
\]
 Define $L:\bbS_m \to \bbS_{m+n}$ by 
\[
 L(\XX) = \begin{pmatrix} \XX & B\\ B^* & D\end{pmatrix}.
\] 
Let 
\[
 \Omega =\{\XX\in \bbS_n(\C): \det L(\XX) \ne 0\}
 \quad\text{and}\quad
 \Omegap = \{\XX\in \Omega: V^* L(\XX)^{-1} V \succeq 0\}.
\]

\begin{lemma}
\label{l:openinx}
The set $\Omegap$ is open, convex, and a 
  connected component of $\Omega.$
\end{lemma}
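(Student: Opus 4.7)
The plan is to reduce $L(\XX)$ to a canonical form via an $\XX$-independent congruence, then read off the conclusion by direct computation.

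\emph{Step 1 (eliminate the invertible part of $D$).} Decompose $\C^n = (\ker D)^\perp \oplus \ker D$ so that, in this basis, $D = D_0 \oplus 0$ with $D_0$ invertible, and $B = (B_0\ B_1)$. A standard block $LDL$ factorization peeling off the $D_0$ block yields an $\XX$-independent invertible $Q$ with
\[
L(\XX) = Q^* \begin{pmatrix} \widetilde L(\widetilde\XX) & 0 \\ 0 & D_0 \end{pmatrix} Q, \qquad \widetilde L(\widetilde\XX) := \begin{pmatrix} \widetilde\XX & B_1 \\ B_1^* & 0 \end{pmatrix},
\]
where $\widetilde\XX := \XX - B_0 D_0^{-1} B_0^*$. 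A short calculation, using that this factorization acts as the identity on the $\C^m$ summand, gives
\[
V^* L(\XX)^{-1} V = V_0^*\, \widetilde L(\widetilde\XX)^{-1}\, V_0,
\]
where $V_0 : \C^m \hookrightarrow \C^m \oplus \ker D$ is the canonical inclusion. The bijection $\XX \leftrightarrow \widetilde\XX$ is an affine shift and thus preserves openness, convexity, and connected components, so we may replace $L$ by $\widetilde L$ and assume $D = 0$ and $B \in M_{m, n_0}(\C)$ with $n_0 = \dim \ker D$.

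\emph{Step 2 (the model case $D = 0$).} If $\ker B \neq \{0\}$, every nonzero $v \in \ker B$ produces $(0, v)^* \in \ker L(\XX)$ for all $\XX$, so $\Omega = \varnothing$ and the claim is vacuous. Otherwise $B$ is injective, and a unitary change of basis on $\C^m$ and on $\C^{n_0}$ lets us arrange $B = \begin{pmatrix} I \\ 0 \end{pmatrix}$ in $\C^m = \range B \oplus (\range B)^\perp$. Splitting $\XX$ as $\begin{pmatrix} \XX_{11} & \XX_{12} \\ \XX_{12}^* & \XX_{22} \end{pmatrix}$ accordingly, Gaussian elimination on the resulting $(m + n_0) \times (m + n_0)$ linear system shows that $L(\XX)$ is invertible precisely when $\XX_{22}$ is invertible, in which case
\[
V^* L(\XX)^{-1} V = \begin{pmatrix} 0 & 0 \\ 0 & \XX_{22}^{-1} \end{pmatrix}.
\]
Consequently $\Omega = \{\XX : \XX_{22} \text{ invertible}\}$ and $\Omegap = \{\XX : \XX_{22} \succ 0\}$, with $\XX_{11}$ and $\XX_{12}$ unconstrained. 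The latter is patently open and convex, and is exactly one of the connected components of the open set $\Omega$, whose components are indexed by the signature of $\XX_{22}$.

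Pulling these conclusions back through the affine shift and unitary changes of coordinates---all of which preserve openness, convexity, and connected components---yields the three assertions for the original $\Omegap$. The only point calling for vigilance is the bookkeeping in step 1 verifying that the congruence $Q$ is trivial on the $\C^m$ slot of $V$; this is routine, since the $LDL$ steps that eliminate $D_0$ only modify the $(\ker D)^\perp$ rows and columns.
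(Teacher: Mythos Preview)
Your argument is correct and follows essentially the same route as the paper's proof, which packages your Step~1 and Step~2 computations into a separate lemma (Lemma~\ref{l:Omega1}) asserting the existence of a subspace $\cH\subseteq\C^m$ and a self-adjoint $F$ on $\cH$ with $\XX\in\Omega$ iff $W^*\XX W-F$ is invertible and $\XX\in\Omegap$ iff $W^*\XX W-F\succ0$; the paper's $\cH$ is your $(\range B_1)^\perp$ and its $F$ is the image of $B_0D_0^{-1}B_0^*$ under the compression to $\cH$. One small slip: a unitary change of basis on both $\C^m$ and $\C^{n_0}$ only brings $B$ to $\begin{pmatrix}\Sigma\\0\end{pmatrix}$ with $\Sigma$ diagonal positive, not to $\begin{pmatrix}I\\0\end{pmatrix}$; but since the bottom-right block is $0$ you may use an invertible (non-unitary) change on $\C^{n_0}$, or simply carry an invertible $\Sigma$ through the computation---either way the conclusion is unaffected.
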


Before proving Lemma~\ref{l:openinx}, we first establish
the following result.

\begin{lemma}
 \label{l:Omega1}
   There exists a subspace $\cH\subseteq \C^m$ 
   and a self-adjoint operator $F$ on $\cH$
   such that, with $W$ equal the inclusion of $\cH$ into $\C^m,$ 
\begin{enumerate}[\rm (1)]
\item \label{i:Omi}
  $\XX\in\Omega$ if and only if $W^*\XX W-F$ is invertible; and 
 \item \label{i:Omii}
  $\XX\in\Omegap$ if and only if
$
  W^* \XX W - F \succ 0.
$
\end{enumerate}
\end{lemma}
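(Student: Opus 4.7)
The proof hinges on a non-unitary similarity that block-diagonalizes $L(\XX)$ together with an explicit description of the top-left block of its inverse.

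First I would decompose $\C^n = \ker D \oplus \range D$ orthogonally and write $B = (B_1\;B_0)$ with $B_1\colon \ker D \to \C^m$, $B_0\colon \range D\to \C^m$; let $D_0$ denote the (invertible, self-adjoint) restriction of $D$ to $\range D$. With $U := \begin{pmatrix} I_m & -B_0 D_0^{-1} P \\ 0 & I_n \end{pmatrix}$, where $P\colon\C^n\to\range D$ is the orthogonal projection, a routine block computation gives
\[
U\, L(\XX)\, U^* = \begin{pmatrix} \XX - B_0 D_0^{-1} B_0^* & B_1 & 0 \\ B_1^* & 0 & 0 \\ 0 & 0 & D_0 \end{pmatrix}
\]
in the decomposition $\C^m \oplus \ker D \oplus \range D$. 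Since $D_0$ is invertible and $\XX$-independent and $U$ fixes $V$ (so $UV=V$ and $V^*U^*=V^*$), invertibility of $L(\XX)$ is equivalent to that of the $2\times 2$ block $M_1(\XX) = \begin{pmatrix} N(\XX) & B_1 \\ B_1^* & 0 \end{pmatrix}$ with $N(\XX) := \XX - B_0 D_0^{-1} B_0^*$, and $V^* L(\XX)^{-1} V$ equals the top-left $m\times m$ block of $M_1(\XX)^{-1}$.

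Next, set $\cH := \ker B_1^* \subset \C^m$ with inclusion $W\colon\cH\hookrightarrow\C^m$, and put $F := W^* B_0 D_0^{-1} B_0^* W$, a self-adjoint operator on $\cH$. A direct kernel analysis of $M_1(\XX)\binom{u}{v}=0$ shows $B_1^* u = 0$ (so $u\in\cH$), and $N(\XX)u+B_1 v=0$ forces $N(\XX)u\in\range B_1=\cH^\perp$ (hence $W^* N(\XX)u=0$) with $v$ uniquely determined provided $B_1$ is injective (if not, $\Omega=\es$ and the lemma holds trivially). Using $u=WW^*u$, this reduces to $(W^*N(\XX)W)(W^*u)=0$, yielding the equivalence: $L(\XX)$ is invertible iff $W^*\XX W - F = W^*N(\XX)W$ is invertible on $\cH$, which is (1).

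For (2) I would solve $M_1(\XX)\binom{A_1}{A_2} = \binom{I}{0}$ for the first block column of $M_1(\XX)^{-1}$. The row $B_1^* A_1 = 0$ forces $A_1 = W\wt A$ for some $\wt A\colon\C^m\to\cH$, and applying $W^*$ to $N(\XX) A_1 + B_1 A_2 = I$ gives $(W^*N(\XX)W)\wt A = W^*$, so $A_1 = W(W^*N(\XX)W)^{-1}W^*$. Thus $V^* L(\XX)^{-1} V = W(W^* N(\XX) W)^{-1} W^*$. Because $W$ is an isometry onto $\cH$, this operator is positive semidefinite iff $(W^*N(\XX)W)^{-1}\succeq 0$ on $\cH$; combined with invertibility, this is equivalent to $W^*N(\XX)W = W^*\XX W - F \succ 0$, giving (2). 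The main technical obstacle is keeping track of the several kernel/range decompositions when $D$ is singular; once the similarity $U$ is identified, the remainder is linear-algebraic bookkeeping.
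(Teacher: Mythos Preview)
Your proof is correct and follows essentially the same route as the paper: eliminate the invertible block $D_0$ (you via the congruence $U$, the paper via the Schur complement of $D_0$ in a $4\times 4$ block matrix obtained by also decomposing $\C^m=\range B_1\oplus\range B_1^\perp$), then observe that invertibility of the remaining block is governed by its compression to $\cH=\ker B_1^*=(\range B_1)^\perp$, arriving at the same $\cH$ and the same $F=W^*B_0D_0^{-1}B_0^*W$. The only difference is presentational: you carry out step two by a kernel argument on $M_1(\XX)$ without explicitly splitting $\C^m$, whereas the paper writes out the $4\times4$ and then $3\times3$ block matrices and reads off the inverse directly.
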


\begin{proof}
The proof  is straightforward in the case that $D$
is invertible.  Indeed, under the assumption 
 that $D$ is invertible, a standard Schur complement
 result says $L(\XX)$ is invertible if and only if 
 the Schur complement of $D,$
\[
 S(\XX)= \XX - B D^{-1} B^*, 
\]
 is invertible and further, in that case, 
\[
 V^* L(\XX)^{-1} V = S(\XX)^{-1}.
\]
Thus the result holds with $\cH=\C^m$ and $F=BD^{-1}B^*.$

 The result also holds trivially if  $\Omega=\emptyset$
 by choosing $\cH=\{0\}.$ Thus, for the remainder
 of this proof, assume $D$ is not 
 invertible and $\Omega\ne \emptyset.$ In particular,
 $\ker D\cap \ker B \ne \{0\}.$

  With respect to the  orthogonal direct sum
 $\C^n = \ker D \oplus \ker D^\bot,$ 
\[
  D=\begin{pmatrix} 0 &0 \\0 & D_0\end{pmatrix}
\quad\text{and}\quad 
 L(\XX) = \begin{pmatrix} \XX & B_1 & B_2 \\
  B_1^* & 0 & 0 \\ B_2^* & 0 & D_0 \end{pmatrix},
\]
 with $D_0$ invertible. 
 It follows  that $B_1: \ker D\to \C^m$ is one-one,
 as otherwise $L(\XX)$ is never  invertible, violating 
 the assumption $\Omega\ne \emptyset.$ 
 
 With respect to the orthogonal decomposition
  $\C^m = \range B_1 \oplus \range B_1^\bot,$
\[
   B_1 =\begin{pmatrix} B_{1,1} \\ 0 \end{pmatrix}: \ker D\to \C^m.
\]
 In particular, $B_{1,1}$ is invertible. In these
 coordinates ($\C^m = \range B_1 \oplus \range B_1^\bot$
 and $\C^n =\ker D \oplus \ker D^\bot$),
\[
 L(\XX) = \begin{pmatrix} \XX_{1,1} & \XX_{1,2} & B_{1,1} & B_{1,2}\\
   \XX_{1,2}^* & \XX_{2,2} & 0 & B_{2,2} \\ 
   B_{1,1}^* & 0 & 0 & 0 \\ 
   B_{1,2}^* & B_{2,2}^* & 0 & D_0\end{pmatrix}.
\]
 Since $D_0$ is invertible, $L(\XX)$ is invertible 
 if and only if the Schur complement of $D_0,$
\[
 T(\XX) = \begin{pmatrix} \XX_{1,1} & \XX_{1,2} & B_{1,1} \\
   \XX_{1,2}^* & \XX_{2,2} & 0  \\ 
   B_{1,1}^* & 0 & 0 \end{pmatrix}
 - \begin{pmatrix} B_{1,2} \\ B_{2,2} \\ 0 \end{pmatrix}  
 \, D_0^{-1} \, \begin{pmatrix} B_{1,2}^* & B_{2,2}^* &0
 \end{pmatrix},
 \]
is invertible.    Writing $T(\XX)$ as
\[
  \begin{pmatrix} \XX_{1,1} -C_{1,1} & \XX_{1,2} - C_{1,2}  & B_{1,1}\\
   \XX_{1,2}^* -C_{1,2}^* & \XX_{2,2} -C_{2,2} & 0 \\ 
   B_{1,1}^* & 0 & 0  \end{pmatrix},
\]
 observe that  $T(\XX)$ is invertible if and only if 
 $\XX_{2,2}-C_{2,2}$ is invertible, 
  proving item~\ref{i:Omi} with $\cH=\range B_1^\bot$
 and $F=C_{2,2}.$  Moreover, 
\[
 T(\XX)^{-1} 
 =\begin{pmatrix} 0 & 0 & B_{1,1}^{-1}\\ 0 & (\XX_{2,2}-C_{2,2})^{-1}
 & * \\ B_{1,1}^{-*} & *&* \end{pmatrix}.
\]
 Since the upper $3\times 3$ block of $L(\XX)^{-1}$ is
 $T(\XX)^{-1},$ it follows that 
 \[
  V^* L(\XX)^{-1}  V = \begin{pmatrix} 0 & 0 \\0& 
  (\XX_{2,2}-C_{2,2})^{-1} \end{pmatrix}.
\] 
 Hence $\XX\in \Omega^+$ %
 if and only if $\XX_{2,2}-C_{2,2}\succ 0,$ proving
 item~\ref{i:Omii} again with $\cH=\range B_1^\bot$
 and $F=C_{2,2}.$
\end{proof}

\begin{proof}[Proof of Lemma~\ref{l:openinx}]
 Since, by Lemma~\ref{l:Omega1}, $\XX\in \Omega^+$
 if and only if  $W^* \XX W -F \succ 0,$ the
 set $\Omega^+$ is both open and convex.  Since 
 $\Omegap$ is convex, to prove
 $\Omegap$ is a connected component of $\Omega,$ it suffices
 to prove $\Omegap$ is closed in $\Omega.$  To this end,
 suppose $(\XX_n)_n$ is a sequence from $\Omegap$ that converges
 to $\XX\in\Omega.$ It follows from Lemma~\ref{l:Omega1}
  that $W^* \XX_n W - F\succ 0$
 for each $n$ and hence, after taking a limit, 
  $W^* \XX W- F\succeq 0.$
 On the other hand, $\XX\in\Omega$ implies  $W^*\XX W-F$ is invertible 
 by Lemma~\ref{l:Omega1}. 
 Hence $W^* \XX W -F \succ 0$ and therefore $\XX\in \Omegap$
 by yet another application of Lemma~\ref{l:Omega1}.
\end{proof}

\begin{proposition}
 \label{p:dompr-inx}
  Suppose $r\in\fftwo$ is a nc rational function 
with the \SMR as in \eqref{eq:ratsSMRintro} 
 and $A\in \bbS_n^{\vh}.$  The set 
\[
 \Omega[A]^+ =\{X\in\bbS_n^{\vg}: (A,X)\in \domp r\}
\]
is open, convex and a connected component of the set 
\[
 \Omega[A]= \{X\in \bbS_n^{\vg}: (A,X)\in \dom r\} 
    \subseteq \bbS_n^\vg.
\]
\end{proposition}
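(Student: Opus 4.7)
The plan is to reduce to Lemma~\ref{l:openinx} (more precisely, to the structural Lemma~\ref{l:Omega1}) by exhibiting a block decomposition of the resolvent on which the $X$-variable enters only in the upper-left corner.

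First, I would fix $A\in \bbS_n^{\vh}$ and consider the affine matrix pencil
\[
L_A(X) \;=\; J\otimes I_n \;-\; \sum_{i=1}^{\vg} T_i\otimes X_i \;-\;\sum_{j=1}^{\tg} S_j\otimes A_j
\]
acting on $\C^e\otimes \C^n.$ By definition, $X\in \Omega[A]$ iff $L_A(X)$ is invertible, and $X\in \Omega[A]^+$ iff in addition $(V_T\otimes I_n)^* L_A(X)^{-1} (V_T\otimes I_n)\succeq 0.$ Next, decompose $\C^e=\range V_T\oplus (\range V_T)^\bot$ and tensor with $\C^n.$ Because each $T_i$ is hermitian with range contained in $\range V_T,$ it has block form $\begin{pmatrix} T_i' & 0\\ 0 & 0\end{pmatrix}$ in this decomposition. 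Hence
\[
L_A(X) \;=\; \begin{pmatrix} \XX(X) & B(A)\\ B(A)^* & D(A)\end{pmatrix},
\]
where $\XX(X)$ is \emph{affine} in $X$ (the $X$-terms appear only in the $(1,1)$-block), and the off-diagonal and $(2,2)$-blocks $B(A),D(A)$ depend only on $A.$

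Now I would invoke Lemma~\ref{l:Omega1}, applied to the fixed data $B(A),D(A):$ it produces a subspace $\cH$ (with inclusion $W$) and a self-adjoint operator $F$ on $\cH,$ depending only on $A,$ such that the block matrix $\begin{pmatrix}\XX & B(A)\\ B(A)^* & D(A)\end{pmatrix}$ is invertible iff $W^*\XX W-F$ is invertible, and further the compression of its inverse to the $(1,1)$-block is positive semidefinite iff $W^*\XX W-F\succ 0.$ Substituting $\XX=\XX(X),$ which is affine in $X,$ this gives the clean characterization
\[
X\in\Omega[A]\iff W^*\XX(X)W-F \text{ is invertible},\qquad X\in\Omega[A]^+\iff W^*\XX(X)W-F\succ 0.
\]

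With this in hand, the three conclusions are immediate. Openness and convexity of $\Omega[A]^+$ follow because the set $\{M : M\succ 0\}$ is open and convex and $X\mapsto W^*\XX(X)W-F$ is affine. For the connected-component statement, $\Omega[A]^+$ is already open in $\Omega[A],$ so it suffices to see it is closed there: if $X_m\in\Omega[A]^+$ converges to $X\in \Omega[A],$ then $W^*\XX(X_m)W-F\succ 0$ passes to $\succeq 0$ in the limit, and since $W^*\XX(X)W-F$ is invertible by the first equivalence, it must in fact be $\succ 0,$ so $X\in\Omega[A]^+.$

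The only subtle point, and the main obstacle, is the block reduction step: one must verify carefully that the hermitianity of each $T_i$ together with the definition of $V_T$ as the inclusion of $\mathrm{span}\cup_i\range T_i$ really forces the $(1,2),(2,1)$ and $(2,2)$ blocks of $\sum T_i\otimes X_i$ to vanish, so that $X$ enters $L_A(X)$ only through the $(1,1)$-block. Once this is done the proof is a direct application of the previously established lemmas.
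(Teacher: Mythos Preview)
Your proposal is correct and follows essentially the same approach as the paper: both use the block decomposition with respect to $\range V_T \oplus (\range V_T)^\bot$ so that the $X$-dependence sits entirely in the $(1,1)$-block, and then reduce to the structural lemmas about $\Omega$ and $\Omega^+$. The only cosmetic difference is that the paper invokes Lemma~\ref{l:openinx} for the abstract $\Omega^+\subseteq\Omega$ and then pulls back along the affine map $\Lambda(X)=\XX(X)$, whereas you invoke Lemma~\ref{l:Omega1} directly and argue with the composed affine map $X\mapsto W^*\XX(X)W-F$; the content is the same.
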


\begin{proof}
 Let $N$ denote the size of realization. 
   Thus $J\in M_N(\C).$ Without loss of generality,
 we assume that $\ran T \oplus \ran T^\bot$  decomposes
 $\C^N$ as $\C^a\oplus \C^b.$
Express $J,S,T$
 with respect to  this orthogonal decomposition as
\[
 \JB =   \begin{pmatrix} \JB_{1,1} & \JB_{1,2}\\ \JB_{1,2}^* & \JB_{2,2} \end{pmatrix}, \ \ \
 \SB_k =  \begin{pmatrix} \SB_{k,0} & \SB_{k,1} \\ \SB_{k,1}^*  & \SB_{k,2} \end{pmatrix}, \ \ \
 \TB_j =  \begin{pmatrix} \TB_{j,0} & 0 \\ 0 & 0 \end{pmatrix}.
\]

Let  $B=\JB_{1,2}\otimes I - \sum \SB_{k,1}\otimes A_k\in M_{a,b}(\C)\otimes\bbS_n$ and
 $D=\JB_{2,2}\otimes I - \sum_k \SB_{k,2}\otimes A_k\in \bbS_b \otimes\bbS_n
 \subseteq \bbS_{bn}$
and define $L:\bbS_{am}\to \bbS_{am+bn}$ by
\[
 L(\XX) = \begin{pmatrix} \XX & B\\ B^* & D
 \end{pmatrix} 
\]
 and let $V$ denote the inclusion of $\C^{a}\otimes \C^n$
 into $(\C^a\otimes \C^n)\oplus \C^b \otimes \C^n.$
Let  $\Omega =\{\XX\in \bbS_{am}: \det L(\XX)\ne0\}$ 
and let
\[
 \Omegap =\{\XX\in \Omega: V^* L(\XX)^{-1} V\succeq 0\}.
\]
 By Lemma~\ref{l:openinx}, $\Omegap$ is open, convex
 and a connected component of $\Omega.$ In
 particular, $\Omegap$ is closed in $\Omega.$

 Define $\Lambda: \bbS_n^{\vg}\to \bbS_{an}$ by
\[
 \Lambda(X) = (\JB_{1,1}\otimes I-\sum_k S_{k,0} \otimes A_k) - \sum_j T_{j,0} \otimes X_j.
\]
Observe $\Lambda$ is affine linear, 
$\Omega[A] = \Lambda^{-1}(\Omega)$ and
 $\Omega[A]^+ = \Lambda^{-1}(\Omegap).$
 Thus, since $\Lambda$ is continuous and
 $\Omegap$ is open, $\Omega[A]^+$ is open.
Likewise, since $\Lambda$ is affine linear
 and $\Omegap$ is convex, $\Omega[A]^+$ is convex
 and thus connected.
 Finally, since  $\Omega[A]^+$ connected, to show it is a component
 of $\Omega[A],$ it suffices to observe that it is closed
 since it is the inverse image under the continuous 
 map $\Lambda|_{\Omega[A]}:\Omega[A]\to \Omega$ 
 of the closed (in $\Omega$) set
 $\Omegap.$
\end{proof}

\subsection{Characterization of partial convexity}
 Throughout this section we fix  an  SMR
 \eqref{eq:ratsSMRintro} for $r$,
and let $R(a,x)$ denote the resolvent of equation \eqref{def:resolvent}.
Recall the definitions of $R_T$ and $\domp r$ of
 equations \eqref{def:RsubT} and \eqref{def:domplus}.\looseness=-1

\begin{theorem}\label{t:character}
If $r\in\fftwo$ is a nc rational function 
with the \SMR as in \eqref{eq:ratsSMRintro}, then 
\begin{enumerate}[\rm (1)]
 \item \label{i:open-and-convex} $\domp r$ is a domain of partial convexity
  for $r;$ 
 \item \label{i:maximal} if $\cD\subset \dom r$ is a \robust
  domain of partial convexity for $r,$ then $\cD\subset \domp r.$
\end{enumerate}
\end{theorem}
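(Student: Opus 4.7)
\emph{Part (1).} Openness in $x$ and convexity in $x$ of $\domp r$ follow immediately from Proposition~\ref{p:dompr-inx}, which establishes that each slice $\domp r[A]$ is open, convex, and a connected component of $\dom r[A]$. To show that $r$ itself is convex in $x$ on $\domp r$, my plan is to invoke Proposition~\ref{p:HM98} and verify the Hessian condition $r_{xx}(A,X)[H]\succeq 0$ on $\domp r$. The sandwich form displayed in the second line of \eqref{e:rxx}---after absorbing $V_T V_T^*$ into the outer factors $\sum_i T_i h_i$, using that each $T_i$ has range in $\ran V_T$---writes $r_{xx}(A,X)[H]$ as $2\, N^* R_T(A,X)\, N$ for $N=V_T^*\Lambda_T[H] R(A,X) c$. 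Hence $R_T(A,X)\succeq 0$ on $\domp r$ immediately forces $r_{xx}(A,X)[H]\succeq 0$ there.

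\emph{Part (2).} Proposition~\ref{p:HM98} converts convexity of $r$ in $x$ on $\cD$ into the inequality $r_{xx}(A,X)[H]\succeq 0$ on $\cD$ for all $H$, and I want to strengthen this to $R_T(A,X)\succeq 0$ on $\cD$. My first step is a connectedness reduction: since $\domp r[A]$ is a connected component of $\dom r[A]$ by Proposition~\ref{p:dompr-inx} (hence open and closed there), and $\cD[A]\subset\dom r[A]$ is convex and therefore connected, it suffices to exhibit, for each $A$ in the $a$-projection of $\cD$, one $X\in\cD[A]$ with $(A,X)\in\domp r$.

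To produce such a point for an arbitrary $(A,X)\in\cD_n$, I plan to exploit the robustness hypothesis. Pick scalar points $(a_0^{(k)},x_0^{(k)})\in\cU_1$ for $k=1,\dots,K$ and, by freeness of $\cD$, form the direct sum $\wt{Y}:=(A,X)\oplus\bigoplus_{k=1}^K(a_0^{(k)},x_0^{(k)})\in\cD_{n+K}$, at which the resolvent is block diagonal. I then evaluate $r_{xx}(\wt Y)[H]\succeq 0$ at test perturbations $H$ whose only nonzero entries are off-diagonal blocks coupling the $(A,X)$-block to the scalar blocks, parametrized by $B_i\in\C^{n\times K}$. A block computation in the spirit of Section~\ref{s:inx} shows that the resulting $n\times n$ submatrix of $r_{xx}(\wt Y)[H]$ is a Gram form whose entries have the shape $\bigl\langle R(A,X)\,\mathsf{g}_\ell,\,\mathsf{g}_r\bigr\rangle$, where the vectors $\mathsf{g}$ are built from $T_j R(a_0^{(k)},x_0^{(k)})c$ contracted against the coefficients of $B_i$. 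Minimality of the SMR forces $\{R(a_0,x_0)c:(a_0,x_0)\in\cU_1\}$ to span $\C^e$ (any orthogonal complement would yield a strictly smaller realization), so applying the $T_j$ sweeps out $\ran V_T$; by varying the scalar base points in the open set $\cU_1$ and the coupling data $B_i$, the $\mathsf{g}$'s can be made to fill $\ran V_T\otimes\C^n$, and the Gram positivity upgrades to $V_T^* R(A,X)V_T=R_T(A,X)\succeq 0$.

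\emph{Main obstacle.} The technical heart is this Gram-matrix / spanning step: a single Hessian evaluation certifies positivity of $R_T$ only on a proper subspace, and the passage to full positivity must orchestrate three ingredients simultaneously---freeness of $\cU$ (multi-point direct sums with scalar points), minimality of the SMR (so that the vectors $T_j R(a_0,x_0)c$ sweep $\ran V_T$ as the scalar base points range over the open set $\cU_1$), and a careful off-diagonal choice of test perturbation $H$ coupling the unknown $(A,X)$-block to the known scalar blocks. Carrying out the block algebra to extract the Gram positivity cleanly is the main technical hurdle.
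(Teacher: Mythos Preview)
Your Part~(1) is correct and matches the paper's argument (Proposition~\ref{p:dompr-inx} for openness/convexity in $x$, then the sandwich form of \eqref{e:rxx} together with Proposition~\ref{p:HM98}).

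For Part~(2) there is a genuine gap in the spanning step. Your key claim is that minimality forces $\{R(a_0,x_0)c:(a_0,x_0)\in\cU_1\}$ to span $\C^e$, with the justification that an orthogonal vector would produce a smaller realization. This is not correct. Minimality only guarantees that the entries $\eta_1,\dots,\eta_e$ of $R(a,x)c$ are linearly independent as \emph{noncommutative} rational functions. A vector $v$ orthogonal to every scalar evaluation $R(a_0,x_0)c$ merely forces the \emph{abelianization} of $\sum_j \bar v_j\eta_j$ to vanish on an open set, hence to vanish identically as a commutative rational function. But abelianization can collapse linearly independent nc expressions (think of entries such as $a_1a_2$ and $a_2a_1$), so $v$ need not be zero and no smaller realization is produced. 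Consequently, with $(A^1,X^1)$ taken to be a direct sum of scalar points, the vectors $T_jR(a_0^{(k)},x_0^{(k)})c$ may only span a proper subspace $W\subsetneq\ran T$, and your Gram positivity certifies at best $P_W\,R_T(A,X)\,P_W\succeq 0$, not $R_T(A,X)\succeq 0$. The connectedness reduction you set up is valid but does not rescue the argument, since the defect is uniform in $X$.

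The paper's fix is precisely the CHSY-type Lemma~\ref{l:precheesy} and its consequence Lemma~\ref{l:beyondCheesy}: instead of a direct sum of scalar points, one manufactures a single \emph{matrix} point $(A^1,X^1)\in\cU_M$ (with $M$ large), together with $w\in M_{m,M}(\C)$ and $H\in\bbS_M^\gv$, for which $(I_N\otimes w)\,\Lambda_T[H]\,R(A^1,X^1)(c\otimes I_M)$ sweeps all of $\ran T\otimes\C^m$. The proof of that lemma uses linear independence of the $\eta_j$ as nc rationals---which genuinely requires evaluations at non-diagonal matrix tuples, where commutator-like terms do not vanish. Replacing your scalar direct-sum construction by an appeal to Lemma~\ref{l:beyondCheesy} (and then running the disaggregation of Subsection~\ref{sssec:character}) closes the gap.
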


\begin{corollary}[\cite{HMV06}]
\label{c:prelim-poly}
  Suppose $r\in \ff.$ If $r$ is convex in a free open set containing $0,$ then
   $\dom_0 r$, the component of  $\dom r$ containing $0$, is
  convex and $r$ is convex on $\dom_0 r.$
\end{corollary}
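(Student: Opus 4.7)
The plan is to derive Corollary~\ref{c:prelim-poly} as the specialization of Theorem~\ref{t:character} and Proposition~\ref{p:dompr-inx} to the one-class-of-variables setting ($\tg=0$), in which the qualifiers ``open in $x$'' and ``convex in $x$'' collapse to ordinary level-wise openness and convexity.

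Let $\cU$ be the hypothesized free open set containing $0$ on which $r$ is convex. The first step is to certify that $0\in\domp r$. Around each point of $\cU$ there is a convex open Euclidean ball contained in $\cU$, and since $r$ is convex on $\cU$ it is convex on each such ball; Proposition~\ref{p:HM98} then applies on the ball and yields $r_{xx}(X)[H]\succeq 0$ for every $X\in\cU$ and every $H$. Evaluating at $X=0$ in the Hessian formula~\eqref{e:rxx} and invoking the minimality of the SMR (which guarantees that the vectors $\Lambda_T[H]R(0)c$ span $\ran\VT$ as $H$ ranges over all tuples at all matrix sizes) forces $R_T(0)\succeq 0$, i.e., $0\in\domp r$.

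The second step assembles the pieces. By Theorem~\ref{t:character}(\ref{i:open-and-convex}), $\domp r$ is itself a domain of partial convexity, hence open and convex at each level with $r$ convex on it. By Proposition~\ref{p:dompr-inx} (specialized to $\tg=0$, where the $A$-slice is simply $(\dom r)_n$), each slice $(\domp r)_n$ is a connected component of $(\dom r)_n$. Convexity makes $(\domp r)_n$ connected, so its containing $0$ forces it to coincide with the connected component of $(\dom r)_n$ about the origin, namely $(\dom_0 r)_n$. Therefore $\dom_0 r=\domp r$, which at once yields the two conclusions of the corollary: $\dom_0 r$ is convex at each level, and $r$ is convex on $\dom_0 r$.

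The main (and only) nontrivial step is the Hessian computation of step one, namely squeezing pointwise positivity of the Hessian out of convexity on the possibly non-convex set $\cU$ and then converting it into positivity of $R_T(0)$ via minimality; the rest is a direct application of results already established in this section.
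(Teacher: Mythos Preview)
Your overall strategy---specialize to $\tg=0$, establish $0\in\domp r$, then identify $\domp r$ with $\dom_0 r$ via Proposition~\ref{p:dompr-inx} and Theorem~\ref{t:character}(\ref{i:open-and-convex})---is correct, and your second step is carried out cleanly. The gap is in the first step. Minimality of the SMR says that the vectors $w(T)Jc$, as $w$ ranges over \emph{all words}, span $\C^e$; it does not say that the first-order vectors $T_iJc$ span $\ran T$. When $\gv<\dim\ran T$ this fails for dimension reasons alone. Consequently, positivity of $r_{xx}(0)[H]$ for all $H$ only yields $D^*R_T(0)D\succeq 0$ with $D=\begin{pmatrix}\VT^*T_1Jc & \cdots & \VT^*T_\gv Jc\end{pmatrix}$, which is strictly weaker than $R_T(0)\succeq 0$.

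The repair uses something you already have: $r_{xx}\succeq 0$ on all of $\cU$, not merely at $0$. The disaggregation argument in the proof of Proposition~\ref{l:character} (equation~\eqref{e:endcharacter} combined with Lemma~\ref{l:beyondCheesy}) selects an auxiliary point $X^1\in\cU$ at which the resolvent $R(X^1)$ mixes the controllability space enough to span $\ran T\otimes\C^m$, and pairs it with $X^2=0$ in the other diagonal block to force $R_T(0)\succeq 0$. That proof invokes convexity of $\cE$ only to obtain Hessian positivity via Proposition~\ref{p:HM98}; since your local-ball argument already supplies $r_{xx}\succeq 0$ on $\cU$ directly, the remainder applies verbatim with $\cE=\cU$ and gives $\cU\subseteq\domp r$, hence $0\in\domp r$. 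With this correction your argument goes through, and it is exactly the route implicit in the paper's placement of the corollary.
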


 It is straightforward to verify that $\domp r$ is a free set.
  That $\domp r$ is open in $x$ and convex in $x$ was established
 in Proposition~\ref{p:dompr-inx}. Thus to prove $\domp r$ is
 a domain of partial convexity for $r,$ it remains to prove 
 that $r$ is convex in $x$ on $\domp r$, a statement that 
 follows from  Proposition \ref{l:character} below.  
 Item \ref{i:maximal} of  Theorem~\ref{t:character} is an immediate consequence of the
 converse portion of Proposition \ref{l:character}.

\begin{proposition}
 \label{l:character}
 Let $r$ denote the rational function  of \eqref{eq:ratsSMRintro}
 and suppose $\cE\subseteq \dom r$ is a free set 
 that is open in $x$ and convex in $x.$ 

 If $R_T\succeq 0$ on $\cE,$ then $r$ is convex
 in $x$ on $\cE.$ 
 Conversely, if $\cE$ contains a free open set $\cU$ with $\cU_1\ne \emptyset,$
 and if $r$ is convex in $x$ on $\cE,$ then  $R_T\succeq 0$ on $\cE.$
\end{proposition}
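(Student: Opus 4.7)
My plan is to derive the proposition from Proposition \ref{p:HM98}, which reduces the problem to analysis of the $x$-partial Hessian $r_{xx}$, combined with the middle-matrix form of $r_{xx}$ already recorded in equation \eqref{e:rxx}.

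For the forward direction, the key observation is that $\ran T_i\subseteq \ran V_T$ for every $i$, so inserting $V_T V_T^*$ on either side of each factor $\Lambda_T[H]$ in the defining expression for $r_{xx}$ lets one pull the $V_T^*$ into the two outer brackets. Concretely, starting from \eqref{e:rxx} one rewrites
\begin{equation*}
r_{xx}(A,X)[H] \;=\; 2\, U(A,X,H)^*\, R_T(A,X)\, U(A,X,H),
\end{equation*}
where $U(A,X,H) = (V_T^*\otimes I)\,\Lambda_T[H]\,R(A,X)\,(c\otimes I)$ depends linearly on $H$. Thus if $R_T\succeq 0$ throughout $\cE$, then $r_{xx}(A,X)[H]\succeq 0$ for all $H$ and Proposition \ref{p:HM98} yields convexity of $r$ in $x$ on $\cE$.

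For the converse, I would start from Proposition \ref{p:HM98}, which gives $r_{xx}(A,X)[H]\succeq 0$ throughout $\cE$, hence $U(A,X,H)^* R_T(A,X) U(A,X,H)\succeq 0$ for all $H$ and all $(A,X)\in\cE$. The task is to show that the vectors $U(A,X,H)$, as $H$ varies (and after further direct-sum amplifications allowed by the free-set structure of $\cE$), exhaust enough of $\ran V_T\otimes\C^n$ to force $R_T(A,X)\succeq 0$. To do this I would fix $(A,X)\in\cE$, pick $(A^\prime,X^\prime)\in\cU$ (possible since $\cU_1\neq\varnothing$), and form the direct sum $(A\oplus A^\prime, X\oplus X^\prime)\in\cE$. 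Evaluating the Hessian inequality at this amplification with $H$ of off-diagonal block form, say $H_i=\begin{pmatrix}0&K_i\\K_i^*&0\end{pmatrix}$, extracts a cross term of the shape $w_1^* R_T(A,X) w_2$ where $w_1,w_2$ lie in the linear span (over the free parameters $K_i$ and $(A^\prime,X^\prime)$) of vectors of the form $V_T^* T_i R(A^\prime,X^\prime) c$. By minimality of the SMR, the state-space similarity theorem (Proposition \ref{p:state-space-similarity}) gives the requisite controllability/observability: as $(A^\prime,X^\prime)$ ranges over $\cU$ and the border data $K_i$ range freely, these vectors span all of $\ran V_T\otimes\C^n$. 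Positivity of the sesquilinear form on this spanning set then forces $R_T(A,X)\succeq 0$.

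The main obstacle will be the cyclicity/spanning step in the converse. Producing $U(A,X,H)$ for a fixed $(A,X)$ only gives access to a subspace governed by $R(A,X)c$ and the $T_i$'s; the crux is to show that bringing in all admissible amplifications from $\cU$ and all border configurations of $H$ genuinely exhausts the target space. I expect this to require the minimality of the SMR together with an analytic-continuation or Zariski-density argument anchored at the scalar points in $\cU_1$, in direct analogy with the border-vector method used in \cite{HMV06} for full convexity. Once this spanning is established, the extension from $\cU$ back to all of $\cE$ is routine, since $R_T$ is a rational function that is continuous on $\cE\subseteq\dom r$ and the semidefinite cone is closed.
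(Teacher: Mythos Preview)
Your approach matches the paper's. The forward direction is identical, and for the converse the paper likewise disaggregates: it forms the direct sum of the target point $(A,X)\in\cE$ with an auxiliary point from $\cU$ and takes $H$ purely off-diagonal, so that the $(1,1)$ block of the Hessian exposes $R_T$ at the target (equation~\eqref{e:endcharacter}).

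Two corrections to your sketch. First, the block you extract is a genuine \emph{quadratic} form $w^* R_T(A,X)\,w$, not a sesquilinear $w_1^* R_T(A,X)\,w_2$: the outer factors $c^* R(A',X')\sum T_i K_i$ and $\sum T_i K_i^* R(A',X')c$ in \eqref{e:endcharacter} are mutual adjoints, and this is what makes positivity on a spanning set immediately force $R_T(A,X)\succeq 0$. Second, the spanning step is not supplied by the state-space similarity theorem (Proposition~\ref{p:state-space-similarity}); rather, minimality gives that the entries of $R(a,x)c$ are linearly independent rational functions, and the paper packages the rest into a dedicated CHSY-type lemma (Lemmas~\ref{l:precheesy} and~\ref{l:beyondCheesy}). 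That lemma uses a dominating-pair/direct-sum argument over $\cU$ together with the absence of rational identities to produce a \emph{single} choice of $(A',X',H,w)$ whose border vectors already span $\ran T\otimes\C^m$. Your instinct that this needs minimality plus a density argument anchored in $\cU_1$, in the spirit of the border-vector method of \cite{HMV06}, is exactly right; it just has to be made into this lemma. Finally, your continuity step is unnecessary: once spanning is in hand you have $R_T(A,X)\succeq 0$ directly for the arbitrary fixed $(A,X)\in\cE$, so there is nothing to extend from $\cU$.
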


\subsubsection{The CHSY Lemma}
 In this subsection we establish a variant of the  CHSY Lemma \cite{CHSY} (see also \cite{BK,Vol18}) suitable
 for a proof of Proposition \ref{l:character}, starting with the of independent interest
 Lemma \ref{l:precheesy} below.

\begin{lemma}
 \label{l:precheesy}
   If $\xi_1,\dots,\xi_K \in \ff$ are linearly independent rational functions in $\gv$ variables,
   $m$ is a positive integer 
   and $\cU$ is a free open subset of $\bbS^\gv$ with $\cU_1\ne \emptyset,$ 
  then there exists a positive integer $M$,
   an $X\in \cU_M$ and a matrix $w\in M_{m,M}(\C)$ such that 
\[
 \{\begin{pmatrix} w\, \xi_1(X) v \\ \vdots \\ w\,  \xi_K(X) v \end{pmatrix} : v\in \C^M\}
   = \C^{K}\otimes \C^m =\C^{Km}.
\]
\end{lemma}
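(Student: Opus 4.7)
The plan is a two-stage amplification. First I locate a tuple in $\cU$ at which the $\xi_i$ evaluate to linearly independent matrices, then I amplify via direct sums and an auxiliary vector $\phi$ to reach the desired surjection. For the first stage, I would produce a positive integer $N_0$ and $X_0\in\cU_{N_0}$ such that $\xi_1(X_0),\ldots,\xi_K(X_0)\in M_{N_0}(\C)$ are linearly independent. This is a standard fact about nc rational functions: $\C$-linearly independent elements of $\ff$ evaluate to linearly independent matrices on a Zariski-open dense subset of their joint domain. Since $\cU_1\ne\emptyset$ and $\cU$ is closed under direct sums and unitary conjugation, $\cU_{N_0}$ is a nonempty open subset of $\bbS_{N_0}^\vg$ for every $N_0\ge 1$ (it contains a unitary-invariant Euclidean neighborhood of $X_1^{\oplus N_0}$ for any $X_1\in\cU_1$), so intersecting with the linear-independence locus gives such an $X_0$ for all sufficiently large $N_0$.

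For the second stage, set $L:=N_0$, $X:=X_0^{\oplus L}\in\cU_{LN_0}$ (so $\xi_i(X)=\xi_i(X_0)^{\oplus L}$ is block diagonal), choose a basis $\phi_1,\ldots,\phi_L$ of $\C^{N_0}$, and form $\phi:=(\phi_1^T,\ldots,\phi_L^T)^T\in\C^{LN_0}$. The key computation: for any nonzero $c=(c_1,\ldots,c_K)$, linear independence of the $\xi_i(X_0)$ gives $\sum c_i\xi_i(X_0)\ne 0$, so the left-kernel of $\sum c_i\xi_i(X_0)$ is a proper subspace of $\C^{N_0}$; since the $\phi_j$ span $\C^{N_0}$, some $\phi_j$ lies outside this kernel, whence $\sum c_i\phi^T\xi_i(X)\ne 0$. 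Thus $\phi^T\xi_1(X),\ldots,\phi^T\xi_K(X)$ are linearly independent rows in $\C^{LN_0}$, equivalently the map $v\mapsto(\phi^T\xi_i(X)v)_i$ from $\C^{LN_0}$ to $\C^K$ is surjective.

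Finally, to insert the factor $\C^m$, set $N:=LN_0$, $M:=mN$, $X':=X^{\oplus m}\in\cU_M$, and $w:=I_m\otimes\phi^T\in M_{m,M}(\C)$. Identifying $\C^M=\C^m\otimes\C^N$ and writing $v=\sum_{j=1}^m e_j\otimes v_j$ with $v_j\in\C^N$, one computes $w\,\xi_i(X')v=\sum_j(\phi^T\xi_i(X)v_j)\,e_j$. Since the $v_j$ vary independently over $\C^N$ and each produces a surjection onto $\C^K$ by the previous step, the combined map $v\mapsto(w\xi_i(X')v)_{i=1}^{K}$ surjects onto $\C^K\otimes\C^m=\C^{Km}$, as required. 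The main obstacle is the first stage: one must pin down the evaluation statement for nc rationals on the prescribed free open set $\cU$ (handling the constraint that $X$ lie in $\cU_{N_0}\cap\bigcap_i\dom\xi_i$), whereas the amplifications in stages two and three are clean direct-sum arguments.
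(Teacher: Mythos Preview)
Your argument is correct, and the amplification in stages two and three is clean. The route, however, differs from the paper's. The paper argues in one pass: it considers all pairs $(z,Y)$ with $Y\in\cU_n$ and $z\in M_{m,n}(\C)$, forms the subspaces $\cV_{(z,Y)}=\{(I_K\otimes z)\Xi(Y)v:v\in\C^n\}\subset\C^K\otimes\C^m$, observes that these add under direct sum, picks a dominating pair $(w,X)$ by finite-dimensionality, and then shows $\cV_{(w,X)}^\perp=0$ by checking that any $\alpha$ orthogonal to every $\cV_{(z,Y)}$ forces a nontrivial combination $\sum\overline{(\alpha_j)_s}\xi_s$ to vanish on all of $\cU$, contradicting linear independence via the absence of rational identities.

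Your organization front-loads the work into the ``standard fact'' of stage one (linear independence of the evaluations $\xi_i(X_0)$ at some point of $\cU$). That fact is indeed standard, but its cleanest proof is precisely the direct-sum minimality argument dual to the paper's: the kernels $\cK_X=\{c:\sum c_i\xi_i(X)=0\}$ satisfy $\cK_{X\oplus Y}=\cK_X\cap\cK_Y$, so a minimal $\cK_{X^*}$ is contained in every $\cK_Y$, forcing $\sum c_i\xi_i\equiv 0$ on $\cU$ for any $c\in\cK_{X^*}$. So the two proofs share the same core idea; the paper applies it once to the full target $\C^K\otimes\C^m$, while you apply it to reach $\C^K$ and then tensor up by hand. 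What your route buys is an explicit bound $M=mN_0^2$ and a concrete description of $(X,w)$; what the paper's route buys is that it never needs to isolate the ``evaluations are linearly independent'' statement separately, and it handles the $m$-amplification and the spanning simultaneously without the two extra direct-sum layers.
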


\begin{proof}
  Let $\Xi =  \col \left (\xi_1, \dots, \xi_K \right ) \in M_{K,1}(\ff).$
 Let $\cS$ denote the set of pairs $(z,Y)$, where, for some $n$, $Y\in \cU_n$
 and $z \in M_{m,n}(\C).$  Given $(z,Y)\in \cS_n$, let 
\[
 \cV_{(z,Y)} = \{ (I_K \otimes z) \Xi(Y)v: v\in \C^n\}\subset \C^K\otimes\C^m.
\]
 Given $A=(z,Y)$ and $\tA=(\tz,\tY)$ both in $\cS$, let 
\[
 A\oplus \tA  = \left(\begin{pmatrix} z & \tz \end{pmatrix},
   \begin{pmatrix} Y &0\\ 0 & \tY \end{pmatrix}
  \right).
\]
 It is straightforward to verify that $\cV_{A\oplus \tA} = \cV_A + \cV_{\tA}.$ Hence, 
 there exists a (dominating) pair $(w,X)\in \cS$ such that
\begin{equation}
\label{e:dominate}
  \cV_{(z,Y)} \subset \cV_{(w,X)},
\end{equation}
  for all $(z,Y)\in \cS.$ Suppose  $\alpha \in \cV_{(w,X)}^\perp.$ From equation \eqref{e:dominate},
 it follows that  $\alpha \in \cV_{(z,Y)}^\perp$ for all $(z,Y)\in \cS.$    Write $\alpha \in \C^K \otimes \C^m$
 as $\alpha =\sum \alpha_j \otimes e_j,$ where $\{e_1,\dots,e_m\}$ is the standard
 orthonormal  basis for $\C^m$ and $\alpha_j\in \C^K.$
   We will show, for each $j,$
 that $\sum_{s=1}^K \overline{(\alpha_j)_s}  \xi_s =0,$ and hence, by the linear independence assumption,
 that each $\alpha_j$, and hence $\alpha,$ is zero. 
 Accordingly, fix $j$ and let $n$ and $Y\in \cU_n$ be given. Given a vector $f\in \C^n,$
 let $w_f = e_j f^*.$ Since $\alpha \in \cV_{(Y,w_f)}^\perp$,
\[
 0 = \alpha^* [ I_K \otimes w_f] \Xi(Y) = (\alpha_j^*\otimes f^*) \Xi(Y)
     = f^* \sum_{s=1}^K \overline{(\alpha_j)_s} \xi_s(Y).
\]
Thus, for each $j$,  the rational function 
$\xi=\sum_{s=1}^K \overline{(\alpha_j)_s}  \xi_s$ vanishes on
 $\cU.$ By hypothesis, $\cU_1\ne \emptyset$ and $\cU$ 
 is an open  free set. Hence, for each $n,$ the set $\cU_n$ is nonempty and open
 and $\xi$ vanishes identically on $\cU.$
 Hence $\xi$  is  identically zero since there are no rational identities \cite{Ber76}; 
 cf.~the definition of nc rational functions via matrix evaluations in \cite{HMV06}.
 The desired conclusion follows.  
\end{proof}

\begin{lemma}\label{l:beyondCheesy}
If the realization \eqref{eq:ratsSMRintro} is minimal and of size $N$ and $\cU$ 
 is a free open subset of $\dom r,$  then, for each $m\in\mathbb N,$
 there exists an $M$, $(A,X)\in \cU,$ a $w\in M_{m,M}(\C)$ and 
  an $H\in \bbS_M^\vg$   such that 
\begin{equation*}
V_{A,X,H,w}:=\{
	(I_N\otimes w) (\sum_i T_i \otimes H_i) R(A,X) (c\otimes I_M) v 
    \mid  v\in \C^M	\} = (\ran T)\otimes \C^m.
\end{equation*}
\end{lemma}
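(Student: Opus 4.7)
The plan is to adapt the direct-sum / dominating-subspace argument from the proof of Lemma~\ref{l:precheesy}. For each quadruple $Y=(M,(A,X),H,w)$ with $(A,X)\in\cU_M$, $H\in\bS_M^\vg$, and $w\in M_{m,M}(\C)$, set $\cV_Y := V_{A,X,H,w}$. Every $T_i$ takes values in $\ran T$, so $\cV_Y\subseteq(\ran T)\otimes\C^m$, which is finite-dimensional. First I would verify direct-sum closure: with $Y\oplus\widetilde Y := (M+\widetilde M,\,(A\oplus\widetilde A,\, X\oplus\widetilde X),\,H\oplus\widetilde H,\,[w,\,\widetilde w])$, the fact that $\cU$ is closed under direct sums, together with $R(A\oplus\widetilde A,\,X\oplus\widetilde X)=R(A,X)\oplus R(\widetilde A,\widetilde X)$, gives $\cV_{Y\oplus\widetilde Y}=\cV_Y+\cV_{\widetilde Y}$. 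Finite-dimensionality then produces a dominating quadruple $Y_0=(M,(A,X),H,w)$ with $\cV_{Y_0}\supseteq\cV_{\widetilde Y}$ for every $\widetilde Y$; this $Y_0$ is the desired witness provided $\cV_{Y_0}$ is the full target.

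I would then show $\cV_{Y_0}=(\ran T)\otimes\C^m$. Take $\alpha\in(\ran T)\otimes\C^m$ orthogonal to $\cV_{Y_0}$; by the dominating property, $\alpha\perp\cV_{\widetilde Y}$ for every $\widetilde Y$. Decompose $\alpha=\sum_{j=1}^m\alpha^{(j)}\otimes e_j$ with $\alpha^{(j)}\in\ran T$. I would specialize the varying quadruple so that $\widetilde w$ has only its $j_0$-th row nonzero, equal to some $\rho^T\in\C^{1\times\widetilde M}$, and so that $\widetilde H_{i_0}=I_{\widetilde M}$ with the remaining $\widetilde H_i=0$. Writing $R(\widetilde A,\widetilde X)(c\otimes v)=\sum_k e_k\otimes G_k(\widetilde A,\widetilde X)v$ with $G_k(a,x):=e_k^* R(a,x)c$, a direct computation reduces the orthogonality of $\alpha$ to
\[
\rho^T\,\bigl(T_{i_0}\alpha^{(j_0)}\bigr)^* R(\widetilde A,\widetilde X)\,c=0
\]
for every $\rho\in\C^{\widetilde M}$ and every $(\widetilde A,\widetilde X)\in\cU_{\widetilde M}$. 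Varying $\rho$ freely then forces the scalar rational function $(T_{i_0}\alpha^{(j_0)})^* R(a,x)c$ to vanish on $\cU$.

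The crucial step is then to invoke minimality. In a minimal symmetric realization the $N$ scalar rational functions $\{e_k^* R(a,x)c\}_{k=1}^N$ are linearly independent --- the controllability statement implicit in minimality --- so $v^* R(a,x)c$ is identically zero if and only if $v=0$. Because $\cU$ is a nonempty free open set, the absence of nontrivial rational identities (as used at the end of the proof of Lemma~\ref{l:precheesy}) upgrades vanishing on $\cU$ to vanishing identically; linear independence then yields $T_{i_0}\alpha^{(j_0)}=0$ for every $i_0\in\{1,\dots,\vg\}$ and every $j_0\in\{1,\dots,m\}$. Since $\bigcap_i\ker T_i=(\sum_i\ran T_i)^\perp=(\ran T)^\perp$, each $\alpha^{(j)}$ lies in $\ran T\cap(\ran T)^\perp=\{0\}$, whence $\alpha=0$. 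Thus $\cV_{Y_0}=(\ran T)\otimes\C^m$, as required. The chief obstacle is the bookkeeping behind the specialization of $\widetilde w$ and $\widetilde H$ that cleanly isolates a single $(i_0,j_0)$-pair so that the no-rational-identities principle and minimality can be applied entrywise.
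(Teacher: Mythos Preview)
Your argument is correct and rests on the same dominating-subspace idea as the paper, but the paper organizes it differently: rather than re-running the direct-sum argument inline, it identifies the $K=\dim(\ran T)$ entries of $\VT^*\big(\sum_i T_i h_i\big)R(a,x)c$ as linearly independent rational functions $\xi_1,\dots,\xi_K$ in the variables $(h,a,x)$ (linear independence following from minimality, since the entries of $R(a,x)c$ are independent and multiplication by the free letters $h_i$ preserves this), and then simply invokes Lemma~\ref{l:precheesy} for these $\xi_j$ on the free open set $\bbS^{\gv}\times\cU$. Your specialization $\widetilde H_{i_0}=I$, $\widetilde H_i=0$ ($i\ne i_0$) together with a rank-one $\widetilde w$ is exactly what isolates a single $\xi_j$, so the two proofs coincide at the level of ideas; the paper's version is just more modular, while yours makes the mechanism (that the full span of $\ran T\otimes\C^m$ is reached because $\bigcap_i\ker T_i=(\ran T)^\perp$) more explicit.
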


\begin{proof}
Let $K$ denote the dimension of $\ran T$ and $U$ a unitary matrix
 mapping $\ran T$ into the first $K$ coordinates of $\C^N.$
The entries $\eta_j$ of the $N\times 1$ matrix $R(a,x)c$ are linearly independent
nc rational functions by minimality of \eqref{eq:ratsSMRintro} and hence so are the entries of
 the $ \gv\, N\times1$ matrix
\[
Q(a,x,h):= \begin{pmatrix} h_1R(a,x)c \\ \vdots \\ h_\gv R(a,x)c\end{pmatrix}.
\]
Thus  there are  $\xi_j \in \ffthree$ such that 
\[
 \sum T_i h_i R(a,x)c = 
    \left [\begin{pmatrix} T_1 & \cdots & T_\gv  \end{pmatrix} \right ]  Q(a,x,h) 
 = U^*\, \col \left (
\xi_1, \cdots,  \xi_K, 0,  \cdots,  0 \right ).
\]
 Further, since the entries of $Q$ are linearly independent, the
 set  $\{\xi_1,\dots,\xi_K\}$ is linearly independent. By Lemma \ref{l:precheesy},
 for each positive integer $m,$ 
 there exists a positive integer $M$, a tuple 
 $(H,A,X)\in \bbS_M^\gv \times \cU_M$ and a matrix
 $w\in M_{M,m}(\C)$ such that the conclusion of Lemma \ref{l:precheesy} holds,
 completing the proof.
\end{proof}

\subsubsection{Proof of  Proposition \ref{l:character}}
\label{sssec:character}
Observe that, from equation \eqref{e:rxx} 
 it is evident that the inequality $R_T \succeq 0$ on $\cE$ 
 implies $r_{xx}$ is positive semidefinite on $\cE$, equivalently $r$ is
  convex in $x$ on $\cE$ by Proposition~\ref{p:HM98}.

 Now suppose $r_{xx}$ is positive semidefinite on $\cE.$ To prove that
 the inequality $R_T\succeq 0$ holds on $\cE$,  {\dis the variables},
 in the following way.  Let 
\[
x_i=\begin{pmatrix}
x_i^{1} & 0 \\ 0  & x^2_i
\end{pmatrix},
\quad
h_i=\begin{pmatrix}
 0 & k_i  \\ k_i^* &  0 
\end{pmatrix},
\quad
a_i=\begin{pmatrix}
a_i^{1} & 0 \\ 0 & a^2_i
\end{pmatrix},
\]
 where the  $x_i^j,$ $k_i$ and $a_i^j$ form a 
 $2(2\vg+\vh)$ collection of freely noncommuting variables.
In these coordinates the $(1,1)$ entry of $r_{xx}$ in
\eqref{e:rxx} equals
\begin{equation}
\label{e:endcharacter}
2 \left [c^* R(a^1,x^1)(\sum_i T_i k_i)\right ] \,  R(a^2,x^2) \,
 \left [(\sum_i T_i (k_i)^*) R(a^1,x^1) c\right ].
\end{equation}
We next apply Lemma \ref{l:beyondCheesy}.
 Given a positive integer $m$ and  $(A^2,X^2)\in \cE_m$, 
 choose $M$ and $(A^1,X^1)\in \cU_M,$ $w\in M_{m,M}(\C)$ and $H\in \bbS_M^\vg$
  satisfying the conclusion of Lemma \ref{l:beyondCheesy}. Thus
 $(A,X) = (A^1\oplus A^2, X^1\oplus X^2)\in \cE_{m+M}$ 
  and hence $r_{xx}(A,X)[H]\succeq 0.$ Choose $K=wH \in M_{m,M}(\C).$ 
   Substituting into  \eqref{e:endcharacter}
 and observing that $\{[\sum T_j \otimes K_j] R(A^1,X^1) (c\otimes I):v\in \C^n\}$
 spans $\ran T\oplus \C^m,$  it now follows that $R_T(A^2,X^2)\succeq 0.$  
\qed

\subsubsection{Proof of Theorem~\ref{t:character}}
 For item~\ref{i:open-and-convex}, Proposition~\ref{p:dompr-inx} says that
 $\domp r$ is open in $x$ and convex in $x.$  The forward
 direction of Proposition~\ref{l:character} says that
 $r$ is convex in $x$ on $\domp r.$

 The converse direction of Proposition~\ref{l:character}
 says, if $\cD$ is a \robust domain of convexity for $r,$
 then $R_T\succeq 0$ on $\cE.$ Thus $\cE\subseteq \domp r.$\hfill\qedsymbol

\subsection{Realizations for partial convexity}

\begin{proposition}\label{p:x-cvx}
The rational function   $r\in\fftwo$ of equation \eqref{eq:ratsSMRintro}
admits the realization
\begin{equation}\label{eq:undevelopedButterfly}
\begin{split}
r & = 
c^* (J-\sum S_ia_i)^{-1}c 
+  c^* (J-\sum S_ia_i)^{-1} \, \sum T_ix_i \, (J-\sum S_ia_i)^{-1} c \\
& \phantom{=\ } + c^* (J-\sum S_ja_j)^{-1} \sum T_ix_i\, 
\left (J-\sum T_j x_j -\sum S_k a_k\right)^{-1} 
\, \sum T_ix_i (J-\sum S_ia_i)^{-1}  c.
\end{split}
\end{equation}
\end{proposition}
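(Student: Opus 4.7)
The plan is to prove the identity \eqref{eq:undevelopedButterfly} by two applications of the second resolvent identity. Write $L = J - \sum_k S_k a_k$ for the ``$a$-only'' pencil and $M = J - \sum_i T_i x_i - \sum_k S_k a_k = L - T(x)$, where $T(x) := \sum_i T_i x_i$. Then $r(a,x) = c^* M^{-1} c$, so it suffices to expand $M^{-1}$ as
\[
 M^{-1} \;=\; L^{-1} \;+\; L^{-1}\, T(x)\, L^{-1} \;+\; L^{-1}\, T(x)\, M^{-1}\, T(x)\, L^{-1}
\]
and then sandwich this between $c^*$ and $c$ to recover the three terms on the right-hand side of \eqref{eq:undevelopedButterfly}.

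To obtain the displayed expansion, I would first record the two forms of the resolvent identity that follow from $M = L - T(x)$, namely
\[
 M^{-1} \;=\; L^{-1} \;+\; L^{-1}\, T(x)\, M^{-1}
 \qquad\text{and}\qquad
 M^{-1} \;=\; L^{-1} \;+\; M^{-1}\, T(x)\, L^{-1}.
\]
Either identity is a routine consequence of multiplying out $M\cdot L^{-1}$ or $L^{-1}\cdot M$ and rearranging. Next, I would substitute the first identity into the $M^{-1}$ appearing on the right of the second identity:
\[
 M^{-1} = L^{-1} + \bigl[L^{-1} + L^{-1}\, T(x)\, M^{-1}\bigr]\, T(x)\, L^{-1}
        = L^{-1} + L^{-1} T(x) L^{-1} + L^{-1} T(x) M^{-1} T(x) L^{-1}.
\]
This is exactly the three-term expansion required.

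The one mild point to flag is that the expansion is only an algebraic manipulation in the free rational function field (or equivalently, in the skew field of fractions of the relevant free ring), so no attention to domains is needed: both $L^{-1}$ and $M^{-1}$ make sense as elements of $\C\plangle a,x\prangle$ and the identities above hold as formal rational identities. At the level of matrix evaluations, the realization \eqref{eq:undevelopedButterfly} is valid on the common domain, which is contained in $\dom r$ and in the locus where $L = J - \sum S_k a_k$ is invertible (i.e., on $\dom r\cap\dom r(a,0)$). There is no genuine obstacle in the proof; it is two lines of resolvent arithmetic. Consequently, I would state and invoke the two resolvent identities, substitute, and then multiply on the left by $c^*$ and on the right by $c$ to finish.
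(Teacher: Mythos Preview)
Your proof is correct and is precisely the ``routine calculation'' the paper alludes to without spelling out: the two-step resolvent identity expansion of $M^{-1}$ in terms of $L^{-1}$ and $T(x)$ is the natural way to obtain \eqref{eq:undevelopedButterfly}. Your remark about the domain of validity (namely $\domkeb r$) is also accurate and anticipates exactly how the formula is used later in the proof of Theorem~\ref{t:wurzelification}.
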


We will refer to a realization of the form \eqref{eq:undevelopedButterfly}
 as a \df{caterpillar realization}.

\begin{proof}
Formula \eqref{eq:undevelopedButterfly} follows from a routine calculation.
\end{proof}

 Recall the definitions
 of $\VT$ and $\pi_a(\cD)$ from equations~\eqref{def:RsubT} and \eqref{e:pisuba}
 respectively.

\begin{theorem}[Wurzelschmetterlingrealisierung]\label{t:wurzelification}
Suppose $r\in\fftwo$ is symmetric 
 with \SMR 
 as in equation \eqref{eq:ratsSMRintro}.

\begin{enumerate}[\rm (1)]
\item  \label{i:wurz-z}
   The set $\domkebp r$ is a \kebab domain of convexity for $r.$
\item 
  \label{i:wurz-d}
  If $\cD\subset\dom r$ is a \robust \kebab domain of convexity for $r,$
 then $\cD\subseteq \domkebp r$;  
\end{enumerate}

 Let  $\hatT_j = \VT^* T_j \VT$  and let $k$ denote the dimension of $\ran T.$
 There exists a  rational function $w(a)\in M_k(\ffa)$
 defined on $\pi_a(\domkeb r)$ and positive semidefinite on $\pi_a(\domkebp r);$
 rational functions $\ell_j(a)\in \ffa^k$ for $1\le j\le g,$ 
 that are %
 defined on $\domkeb r;$  %
 and a  rational function $f\!\!\!f (a,x)$  that is
    affine linear in $x$ and  defined on $\domkeb r$   
   such that, with
\begin{equation}
\label{e:ell}
 \ell(a,x) = \sum x_j \ell_j(a),
\end{equation}
\begin{enumerate}[resume*]
 \item \label{i:wurz-a}
   if $(B,Y)\in\domkeb r;$ 
     then $I-(\sum T_j\otimes Y_j)w(B)$ is invertible and
\[
  r(B,Y)=\ell(B,Y)^*  w(B) \left (I-(\sum \hatT_i\otimes Y_i)w(B) \right)^{-1}  \ell(B,Y) + f\!\!\!f (B,Y);
\]
 \item \label{i:wurz-c} 
$
 \domkebp  r=\{(A,X) \in \domkeb r: w(A) \succeq 0 \text{ and }
      I - \sqrt{w(A)} \, [\sum \hatT_j\otimes X_j] \, \sqrt{w(A)} \succ 0\};
$
 and 
\begin{equation}\label{eq:wurscht}
r|_{\domkebp r}(a,x)=\ell(a,x)^*\sqrt {w(a)} \, \left (I-\sqrt {w(a)}\, \sum  \hatT_ix_i\, \sqrt {w(a)}\right)^{-1}
       \sqrt {w(a)}\,\ell(a,x) + f\!\!\!f (a,x);
\end{equation}
\item 
 \label{i:wurz-poly}
  If $r$ is a polynomial and $\cD$ is a \robust \kebab domain of convexity
  for $r,$ then 
 \begin{enumerate}[\rm (a)]
   \item $f\!\!\!f,$ $w,$ $\ell$ are also polynomials; 
   \item  $r$ has the representation, 
      \begin{equation}
       \label{e:wurzpoly}
            r(a,x) = \ell(a,x)^* w(a) \ell(a,x) +  f\!\!\!f (a,x),
       \end{equation}
    and hence $r$ is convex in $x$ on $\pi_a(\cD)\times \bbS^{\vg}$
 and  has degree at most two in $x.$
 \end{enumerate}
\end{enumerate}
 Conversely, any (rational) function of the form  \eqref{eq:wurscht}
 is convex in $x$ on the set $\domkebp r$  and   any polynomial of the form of equation
   \eqref{e:wurzpoly} is convex in $x$ on the free strip $\{A\in\bbS^\tg: w(A)\succeq 0\}\times \bbS^\vg.$
\end{theorem}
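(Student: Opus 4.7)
The approach is to start from the caterpillar realization of Proposition~\ref{p:x-cvx} and apply a Woodbury identity to the middle resolvent in its genuinely $x$-nonlinear third term. Writing $\sum T_i x_i = \VT\Lambda(x)\VT^*$ with $\Lambda(x) = \sum\hatT_i x_i$ and $M(a) = J - \sum S_k a_k$, the Woodbury calculation gives
\[
\VT^*\bigl(M(a)-\VT\Lambda(x)\VT^*\bigr)^{-1}\VT \;=\; w(a)\bigl(I-\Lambda(x)w(a)\bigr)^{-1},
\]
where $w(a):=\VT^*M(a)^{-1}\VT$. Setting $\ell_j(a):=\hatT_j\VT^*M(a)^{-1}c$, $\ell(a,x):=\sum x_j\ell_j(a)$, and absorbing the two caterpillar terms that are affine-linear in $x$ into a single $f\!\!\!f(a,x)$ yields the square-root-free realization of part~\eqref{i:wurz-a}. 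Parts~\eqref{i:wurz-z} and~\eqref{i:wurz-d} then follow by combining Theorem~\ref{t:character} with the observation $R_T(A,0)=w(A)$, which identifies the kebab condition $(A,0)\in\domp r$ with $w(A)\succeq 0$.

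On the locus $w(a)\succeq 0$ the identity $w(I-\Lambda w)^{-1}=\sqrt{w}(I-\sqrt{w}\,\Lambda\sqrt{w})^{-1}\sqrt{w}$ converts part~\eqref{i:wurz-a} into the Wurzel form~\eqref{eq:wurscht}. For the set equality in part~\eqref{i:wurz-c}, the inclusion $\supseteq$ is immediate: under $w(A)\succeq 0$ and $I-\sqrt{w(A)}\Lambda(X)\sqrt{w(A)}\succ 0$, the identity exhibits $R_T(A,X)$ as the conjugation of a positive operator. The reverse inclusion is the step I expect to demand the most care. Given $(A,X)\in\domkebp r$, Proposition~\ref{p:dompr-inx} places $X$ in the connected, convex component $\Omega[A]^+$ containing $0$; Woodbury forces $I-\Lambda(X)w(A)$, and consequently $I-\sqrt{w(A)}\Lambda(X)\sqrt{w(A)}$ on $\range\sqrt{w(A)}$, to be invertible throughout $\Omega[A]^+$; and at $X=0$ this operator equals $I$, so along the convex homotopy $t\mapsto tX$, $t\in[0,1]$, its spectrum cannot cross zero without violating invertibility. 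Hence strict positivity propagates from $0$ to $X$.

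Part~\eqref{i:wurz-poly} exploits the fact that a minimal SMR of a polynomial $r$ can be taken with $J$ a fixed signature block and the $S_k,T_i$ jointly nilpotent in a compatible basis; this makes $M(a)^{-1}$ polynomial in $a$, whence $w(a)$ and each $\ell_j(a)$ are polynomial, while $\Lambda(x)w(a)$ is nilpotent, so $(I-\Lambda(x)w(a))^{-1}$ truncates to a finite sum. A degree comparison of the resulting expansion against the finite $x$-expansion of the polynomial $r$ then forces all but the degree-zero term of this sum to be annihilated by the surrounding $\ell$'s, yielding~\eqref{e:wurzpoly} and the degree-two bound in $x$. Finally, the converses to~\eqref{eq:wurscht} and~\eqref{e:wurzpoly} are applications of Proposition~\ref{p:HM98}: a direct differentiation shows the $x$-Hessian of the Wurzel form has the shape $u(a,x)[h]^*\sqrt{w(a)}\bigl(I-\sqrt{w(a)}\Lambda(x)\sqrt{w(a)}\bigr)^{-1}\sqrt{w(a)}\,u(a,x)[h]$ for an $h$-linear $u$, which is manifestly positive semidefinite wherever the inner inverse is positive; the polynomial case is simpler still.
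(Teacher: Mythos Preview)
Your treatment of items~\eqref{i:wurz-z}--\eqref{i:wurz-c} and the converse is essentially the paper's argument: you use the Woodbury/push-through identity where the paper does a direct block computation from $R^{-1}(A,X)W(A)=I-(\sum T_j\otimes X_j)W(A)$, but the content is the same, and your convex homotopy for the reverse inclusion in~\eqref{i:wurz-c} matches the paper's. One small slip: passing from invertibility of $I-\Lambda(X)w(A)$ to invertibility of $I-\sqrt{w(A)}\Lambda(X)\sqrt{w(A)}$ needs the observation that $PQ$ and $QP$ share eigenvalues (take $P=\Lambda(X)\sqrt{w(A)}$, $Q=\sqrt{w(A)}$); the paper invokes this explicitly.

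The genuine gap is in item~\eqref{i:wurz-poly}. First, a minor point: in a symmetric minimal realization the $S_k,T_i$ are hermitian and are essentially never jointly nilpotent; what \cite{klep-volcic} gives is joint nilpotence of $(JS_k,JT_i)$, from which $M(a)^{-1}$ and hence $w,\ell,\jpf$ are polynomials. More importantly, your ``degree comparison'' does not do what you claim. You correctly note that $\Lambda(x)w(a)$ is nilpotent, so
\[
r=\jpf+\ell^*w\ell+\ell^*w\Lambda w\ell+\dots+\ell^*w(\Lambda w)^N\ell
\]
is a finite sum with the $n$-th summand homogeneous of $x$-degree $n+2$. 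Knowing only that $r$ is \emph{some} polynomial gives no a~priori bound on its $x$-degree, so there is nothing to compare against; the expansion could simply terminate at a large $N$ with all summands nonzero. The mechanism the paper uses (stated at $a=0$ after a translation, but the same argument applies pointwise on $\pi_a(\domkebp r)$) is different and uses positivity in an essential way: from joint nilpotence of $(T_jM(a)^{-1})_j$ one reads off (via the block form $T_j=\VT\hatT_j\VT^*$) joint nilpotence of $(\hatT_j w(a))_j$; since $w(a)\succeq 0$ on $\pi_a(\domkebp r)$, each $\sqrt{w(a)}\,\hatT_j\,\sqrt{w(a)}$ is simultaneously hermitian and nilpotent, hence zero. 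Thus $w(a)\hatT_j w(a)=0$, which kills every summand with $n\ge 1$ and yields~\eqref{e:wurzpoly} together with the degree-two bound. Your outline is missing precisely this ``hermitian and nilpotent implies zero'' step.
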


Given the symmetric realization \eqref{eq:ratsSMRintro},
express the matrices $T_j,S_j$ as block $2\times 2$ matrices with respect
 to the orthogonal decomposition $\ran T\oplus \ran T^\perp$ as
\begin{equation}
\label{def:That}
T_j=\begin{pmatrix} \hatT_j & 0 \\ 0 & 0\end{pmatrix},
\quad
S_j=\begin{pmatrix} S_{11}^j & S_{12}^j \\ S_{12}^{i*} & S_{22}^j\end{pmatrix}, \quad
J=\begin{pmatrix} J_{11} & J_{12} \\ J_{12}^* & J_{22}\end{pmatrix}.
\end{equation}

\begin{proof}[Proof of Theorem~\ref{t:wurzelification}]
  By definition, $\domkebp r$ is convex in $x$
  and a subset of $\domp r.$ Thus, since $r$ is convex
 in $x$ on $\domp r,$ it is also convex in $x$ on 
 $\domkebp r.$ Thus item~\ref{i:wurz-z} holds.

 If $\cD\subseteq \dom r$ is full \kebab domain of
 convexity for $r,$ then $\cD$ is a \robust  domain 
 of partial convexity for $r.$ Hence, by 
 Theorem~\ref{t:characterINTRO}, $\cD\subseteq \domp r.$
 If $(A,X)\in \cD,$ then $(A,0)\in \cD,$ since
 $\cD$ is a \kebab set. Thus both $(A,X)$ and $(A,0)\in \domp r$
 and hence $(A,X)\in \domkebp r,$ proving item~\ref{i:wurz-d}.

By Proposition \ref{p:x-cvx},
$r$ admits the caterpillar realization
\eqref{eq:undevelopedButterfly} whose resolvent,
\begin{equation*}
R(a,x)=\begin{pmatrix}
J_{11}-\sum \hatT_j x_j-\sum S_{11}^ja_j & J_{12}-\sum S_{12}^ja_j \\
J_{12}^*-\sum S_{12}^{j*} a_j & J_{22}-\sum S_{22}^ja_j,
\end{pmatrix}^{-1}
\end{equation*}
 is defined on the domain of $r.$ 
 We obtain a free rational function  $W(a)=R(a,0)\in \ffa.$ Let 
 $w(a)=\VT^* R(a,0)\VT$ denote the (block) $(1,1)$-entry of $W(a).$ 
  Likewise the domain of the  rational function
\[
\ell(a,x)= \VT^* \sum T_ix_i W(a) c
\]
contains $\dom W.$ %

 Suppose $(A,X)\in \domkeb r.$ 
 Thus $(A,0),(A,X)\in \dom r,$  and hence
\begin{equation}
 \label{e:ref1-1}
\begin{split}
 R^{-1}(A,X)W(A)& =  \left (J-\sum T_j\otimes X_j - \sum S_k\otimes A_k\right) W(A)\\
  &  = I - \left(\sum T_j \otimes X_j\right)W(A) \\
  & = \begin{pmatrix} \left (I-\sum \hatT_j\otimes X_j\right )  w(A) & *\\0 & I \end{pmatrix}.
\end{split}
\end{equation}
It follows that $I-(\sum \hatT_j\otimes X_j) w(A)$ is invertible whenever 
$(A,0), (A,X)\in \dom r$, establishing the first half of item~\eqref{i:wurz-a}.
 Moreover, in that case, from equation~\eqref{e:ref1-1},
\[
 R(A,X)  = W(A) \, \begin{pmatrix} \left (I-\sum \hatT_j\otimes X_j\right )  w(A) & *\\0 & I \end{pmatrix}^{-1}
 = \begin{pmatrix} w(A) \left (I-\sum \hatT_j\otimes X_j\right )  w(A) & *\\0 & I \end{pmatrix}^{-1}
\] 
 and thus
\begin{equation*}
R_T(a,x) = \VT^* R(a,x)\VT =  w(a) \left (I-(\sum \hatT_ix_i)w(a) \right)^{-1}.
\end{equation*}
 Letting $f\!\!\!f$ denote the affine linear in $x$ term
 from the caterpillar realization of equation~\eqref{eq:undevelopedButterfly},
\[
 r(A,X)=\ell(A,X)^*  w(A) \left (I-(\sum \hatT_i\otimes X_i)w(A) \right)^{-1}  \ell(A,X) + f\!\!\!f (A,X),
\]
when $(A,X)\in \domkeb r,$ proving item \eqref{i:wurz-a}.

 Given square matrices $P$ and $Q$ of the same size, 
 the eigenvalues of $PQ$ and $QP$ are the same.
 Now suppose $(X,A)\in \domkeb r$ and $w(A)\succeq 0$ and let 
  ${\tt{T}}=\sum \hatT_i\otimes X_i.$ Choosing
 $P={\tt{T}}\sqrt{w(A)}$ and $Q=\sqrt{w(A)},$ it follows
 that ${\tt{T}}w(A)$ and $\sqrt{w(A)}{\tt{T}}\sqrt{w(A)}$
 have the same eigenvalues. Thus, in view of item~\eqref{i:wurz-a}, if
 $I-\sqrt{w(A)}{\tt{T}}\sqrt{w(A)}\succeq 0,$ 
  then $I-\sqrt{w(A)}{\tt{T}}\sqrt{w(A)}\succ0.$ Hence
\[
 R_T(A,X)= w(A)(I-{\tt{T}}w(A))^{-1} = 
  \sqrt{w(A)} \left(I-\sqrt{w(A)}{{\tt{T}}}\sqrt{w(A)} \right)^{-1} \sqrt{w(A)}\succeq 0
\]
 and therefore $(A,X)\in  \domp r.$ The assumption
 $R_T(A,0) = w(A)\succeq 0$ is equivalent to $(A,0)\in \domp r.$
 Hence $(A,X)\in \domkebp r.$ 

Conversely, if $(A,X)\in\domkebp r,$
 then $w(A)\succeq 0$ and, since $\domkebp r$
 is convex in $x$ and $(A,0)\in\domkebp r,$
 for each $0\le t\le 1,$ the matrix
  $I-t {\tt{T}}w(A)$ is invertible
 and hence so is $M(t)=I-\sqrt{w(A)}{\tt{T}}\sqrt{w(A)}.$
 Since $M(0)$ is positive and $M(t)$ is invertible 
 and self-adjoint for $0\le t\le 1,$ it follows
 that $M(1)\succ 0$ and the proof of item~\eqref{i:wurz-c}
 is complete.

 In the case $r$ is a polynomial,  
 $R(a,x)$ is globally defined (has no singularities) and is therefore a (matrix-valued) polynomial
 by \cite[Corollary 3.4]{klep-volcic}. Hence both $w(a)$ and $\ell(a,x)$
 are polynomials.  By hypothesis, there is a free open set $\cU\subset \cD$
 with $\cU_1\ne \emptyset.$ Choose a point $(\tta,\ttx)\in \cU_1 
  \subset \mathbb R^{\tg}\times\mathbb R^{\vg}$ and consider the polynomial
 $q(a,x)=r(a-\tta,x).$  Let $\cD^\prime =\{(A-\tta I,X):(A,X)\in \cD\}.$
 If $(A,X)\in \cD^\prime,$ then $(A-\tta I,X)\in \cD$ and hence
 $(A-\tta I,0)\in \cD$ and finally $(A,0)\in \cD^\prime.$ Thus
 $\cD^\prime$ is a \kebab domain of partial convexity for $q.$ 
 Hence, without loss of generality, we assume from the outset
 that $(0,0)\in \cD.$ Then  $w(0)=\VT^* R(0,0)\VT$ is positive semidefinite
 by Theorem \ref{t:character}   since 
 we have now convexity in $x$ in a neighborhood of $0.$ Next
 $R(0,0)=J^{-1}=J$ and so $w(0)=J_{1,1}\succeq 0.$ Since $r$ is a polynomial
 (and the realization is minimal), $TJ$ is (jointly) nilpotent
 by \cite[Corollary 3.4]{klep-volcic}. But
\[
 TJ =\begin{pmatrix} \hat T &0\\0&0 \end{pmatrix} \,
 \begin{pmatrix} J_{11} & J_{12}\\ J_{12}^* & J_{22}\end{pmatrix}
 =\begin{pmatrix} \hat T J_{11}& \hat T J_{12}\\0&0\end{pmatrix},
\]
whence $\hat T J_{12}$ is (jointly) nilpotent. Thus
 $Y=\sqrt{J_{11}}T_j \sqrt{J_{11}}$ is self-adjoint and nilpotent
 and hence $0.$  Thus, from equation \eqref{eq:wurscht}, $r$
 has the representation of equation \eqref{e:wurzpoly}. From this
 representation it is immediate that $r$ has degree (at most) two in $x$
 and is convex in $x$ on the set $\{(A,X):w(A)\succeq 0\},$ which includes
 $\pi_a(\cD)\times \bbS^{\vg}.$
\end{proof}

\begin{corollary}
\label{c:notwurzelification}
Let $\cD$ be a  \kebab set.
Let $r\in\fftwo$ be a nc rational function in two
classes of variables $x=(x_1,\ldots,x_\gv)$ and
$a=(a_1,\ldots,a_{\tg})$.
Let $r$ have a \SMR \eqref{eq:ratsSMRintro}. 
Consider the matrices in block form based on $\ran T$ in
equation \eqref{def:That} and let $k$ denote the 
 dimension of $\ran T.$

If  $J_{22}$ is invertible, then the function $r$ is convex in $x$ on $\cD$ if 
and only if 
there exists a rational function $\ell(a,x)\in \fftwo^{k\times 1}$ that
is linear in $x$, and a rational function $m(a)\in \fftwo^{k\times k}$ such
that
\[
r=\ell(a,x)^* \, \left (m(a)-\sum \hatT_ix_i\right)^{-1}\, \ell(a,x) + f\!\!\!f (a,x),
\]
where $f\!\!\!f (a,x)\in \fftwo$ is affine linear in $x$,
and the  resolvent $(m(a)-\sum \hatT_ix_i)^{-1}$
is positive on a dense subset of $\cD_n$ for large $n$.
\end{corollary}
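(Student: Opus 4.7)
The plan is to begin with the wurzelschmetterling realization from Theorem \ref{t:wurzelification}, namely
\[
r = \ell(a,x)^{*}\, w(a)\bigl(I - \bigl(\textstyle\sum \hatT_i x_i\bigr) w(a)\bigr)^{-1} \ell(a,x) + \jpf(a,x),
\]
and then to use the invertibility of $J_{22}$ to convert the outer factor $w(a)$ into a resolvent, thereby eliminating the square roots that appear in the positivity characterization of $\domkebp r$. The key enabling observation is that, when $J_{22}$ is invertible, the matrix $w(a) = \VT^{*} R(a,0)\VT$ is itself the inverse of an explicit rational matrix function in $a$ alone.

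First I would carry out a Schur complement computation on the block decomposition \eqref{def:That}. Because the $(2,2)$ block of $R(a,0)^{-1}$ equals $J_{22} - \sum_j S_{22}^j a_j$, which is invertible in a neighborhood of $a=0$, the standard Schur complement formula for block inversion yields
\[
w(a)^{-1} = \bigl(J_{11} - \textstyle\sum_j S_{11}^j a_j\bigr) - \bigl(J_{12} - \textstyle\sum_j S_{12}^j a_j\bigr)\bigl(J_{22} - \textstyle\sum_j S_{22}^j a_j\bigr)^{-1}\bigl(J_{12} - \textstyle\sum_j S_{12}^j a_j\bigr)^{*}.
\]
Setting $m(a) \in \fftwo^{k\times k}$ equal to this expression, a short manipulation gives the identity
\[
w(a)\bigl(I - \bigl(\textstyle\sum \hatT_i x_i\bigr) w(a)\bigr)^{-1} = \bigl(m(a) - \textstyle\sum \hatT_i x_i\bigr)^{-1},
\]
and substituting into the wurzelschmetterling realization yields the claimed representation of $r$ with the same $\ell$ and $\jpf$ supplied by Theorem \ref{t:wurzelification}.

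For the positivity assertion, the same identity shows that $R_T(a,x) = (m(a) - \sum \hatT_i x_i)^{-1}$. In the forward direction, convexity in $x$ of $r$ on $\cD$ combined with the partial Hessian factorization \eqref{e:rxx} and the CHSY-style disaggregation argument appearing in the proof of Proposition \ref{l:character} forces $R_T(A,X)\succeq 0$ at generic points of $\cD$, which is exactly positivity of the resolvent $(m(a) - \sum \hatT_i x_i)^{-1}$ on a dense open subset of $\cD_n$ for $n$ sufficiently large. Conversely, given the claimed representation together with the positivity condition, equation \eqref{e:rxx} immediately yields $r_{xx}(A,X)[H]\succeq 0$ on the dense subset, hence throughout $\cD$ by continuity of the rational function $r_{xx}$, so Proposition \ref{p:HM98} (applied on open-in-$x$ portions of $\cD$) delivers convexity in $x$ on $\cD$.

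The main obstacle will be the phrase \emph{positive on a dense subset}: since $\cD$ is only assumed to be a \kebab set, and need not be open in $x$ nor convex in $x$, Proposition \ref{p:HM98} and the CHSY-style argument of Proposition \ref{l:character} cannot be invoked directly on all of $\cD$. One must first restrict to a free open-in-$x$ subset where the generic hypotheses of those results are in force, extract positivity of $R_T$ there, and then propagate it back through $\cD$ via continuity of $(m(a) - \sum \hatT_i x_i)^{-1}$ wherever this resolvent is defined.
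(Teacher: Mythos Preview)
Your proposal is correct and rests on the same two ingredients the paper uses: a Schur complement computation (enabled by the invertibility of $J_{22}$) to identify $R_T(a,x)=(m(a)-\sum \hatT_i x_i)^{-1}$, and Proposition~\ref{l:character} for the equivalence between convexity in $x$ and positivity of $R_T$. The only difference is cosmetic: the paper applies the Schur complement directly to the caterpillar realization of Proposition~\ref{p:x-cvx}, whereas you first pass through Theorem~\ref{t:wurzelification} to obtain $w(a)$ and then invert it; since Theorem~\ref{t:wurzelification} is itself derived from Proposition~\ref{p:x-cvx}, your route is a small detour but lands in the same place.
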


\begin{proof}
This result follows by using the Schur complement form for the inverse of a block matrix in  
Proposition \ref{p:x-cvx},   the positivity
condition follows from 
Proposition \ref{l:character}. 
\end{proof}

\section{A polynomial factorization}
\label{sec:factor}
In this section we introduce an auxiliary operation $\sE$
on both matrices and polynomials and in Theorem 
\ref{l:FstarFisM} provide a decomposition of symmetric
polynomials $\pp\in M_2(\C\axy)$ for which $\sE{\pp}$
is (matrix) positive. This result is a key ingredient
 in the proof of Theorem \ref{t:introxyconvexp}, which
 appears in Section~\ref{sec:xypolys}, characterizing
$xy$-convex polynomials.

Given a pair of block $2\times 2$ matrices
$A=(A_{i,j})$ and $B=(B_{i,j})$ define
\[
 A\circledast B = \begin{pmatrix} A_{i,j}\otimes B_{i,j} \end{pmatrix}.
\]
Thus \df{$A\circledast B$} is a mix of Schur product ($\ast$) and tensor product ($\otimes$).
 It is known as the Khatri-Rao product. 
Let $V_1=\begin{pmatrix} I \\ 0 \end{pmatrix}$
  and $V_2 =\begin{pmatrix} 0 \\ I \end{pmatrix}$ with respect to the
 block decomposition of $A$ and define $W_1,W_2$ similarly with respect to the
 block decomposition of $B$. Let
\[
 E =\begin{pmatrix} V_1\otimes W_1 & V_2\otimes W_2 \end{pmatrix}.
\]

\begin{lemma}
\label{l:astvtensor}
 With notation as above,  $A\circledast B = E^* [A\otimes B] E$.
\end{lemma}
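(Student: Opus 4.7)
The plan is to verify the identity block-by-block using the mixed-product property of the tensor product, together with the fact that the columns of $E$ are precisely the tensor products of the block inclusions of $A$ and $B$.

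First, I would note that $V_1,V_2$ satisfy $V_p^* A V_q = A_{p,q}$ (these are exactly the coordinate maps extracting the $(p,q)$-block of $A$), and symmetrically $W_p^* B W_q = B_{p,q}$. These are the defining relations for the block decomposition. The orthogonality relations $V_p^* V_q = \delta_{pq} I$ and $W_p^* W_q = \delta_{pq} I$ are what make the $V_p, W_p$ proper block inclusions.

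Next, since $E$ has the block-column form $E = \begin{pmatrix} V_1\otimes W_1 & V_2\otimes W_2 \end{pmatrix}$, the $(p,q)$-block of $E^*[A\otimes B]E$ is exactly
\[
(V_p\otimes W_p)^*\,[A\otimes B]\,(V_q\otimes W_q).
\]
By the mixed-product property $(X\otimes Y)(Z\otimes W) = (XZ)\otimes (YW)$, this equals
\[
(V_p^* A V_q)\otimes (W_p^* B W_q) = A_{p,q}\otimes B_{p,q},
\]
which is precisely the $(p,q)$-block of $A\circledast B$ by definition. Matching all four blocks proves the identity.

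There is no real obstacle — the only mild subtlety is making sure the block structure of $E^*[A\otimes B]E$ is read off correctly (namely, $E^*$ contributes rows of the form $(V_p\otimes W_p)^*$ and $E$ contributes columns $V_q\otimes W_q$), and that the mixed-product property is applied to handle the interaction between the two tensor factors. Once that bookkeeping is done, the identity falls out immediately.
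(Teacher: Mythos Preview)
Your proof is correct and matches the paper's own proof essentially line for line: the paper also writes $E^*[A\otimes B]E = \big((V_j^*\otimes W_j^*)[A\otimes B](V_k\otimes W_k)\big)_{j,k}$ and then observes this equals $A_{jk}\otimes B_{jk}$ by the mixed-product property. Your version simply spells out the bookkeeping a bit more explicitly.
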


\begin{proof}
 Note that 
\[
  E^* [A\otimes B] E = \left(  (V_j^*\otimes W_j^*) [A\otimes B] (V_k\otimes W_k)
 \right)_{j,k=1}^2
\]
 and $(V_j^*\otimes W_j^*) [A\otimes B] (V_k\otimes W_k) = A_{jk}\otimes B_{jk}$.
\end{proof}

Let, for $j=1,2$, 
\[
 s_j=\begin{pmatrix} s_{j,0} & s_{j,1} \\ s_{j,1}^* & s_{j,2}\end{pmatrix},
\]
where $\{s_{j,k}: 1\le j\le 2, \, 0\le k\le 2\}$ are freely noncommuting
variables with  $s_{j,0}$ and $s_{j,2}$ symmetric; that is $s_{j,k}^* = s_{j,k}$
 for $k=0,2$.    For notational purposes, let
\[
 s_0 = I_2 = \begin{pmatrix}  1 & 0\\0& 1 \end{pmatrix}.
\]
Suppose $p= \sum_{j,k=0} p_{j,k} \, x_j x_k,$ is a $2\times 2$ symmetric matrix 
  polynomial of degree (at most) two in
 two symmetric variables $x=(x_1,x_2),$ 
where, for notation purposes,  $\sx_0=1$ (the unit in $\C\ax$),
  each $p_{j,k}\in M_2(\C)$  and $p_{j,k}^*=p_{k,j}.$
 Let \df{$\sE{p}$} denote the matrix polynomial in the six
variables $\{s_{j,0},s_{j,1},s_{j,2}: 1\le j\le 2\}$ defined by
\[
 \sE{p}(s) = \sum_{j,k=0}^2 p_{j,k} \circledast s_js_k.
\]
 Such a polynomial is naturally evaluated 
at a pair of block $2\times 2$ symmetric matrices,
\begin{equation}
\label{eq:S2x2}
 S_j= \begin{pmatrix} S_{j,0}& S_{j,1}\\S_{j,1}^* & S_{j,2}\end{pmatrix} \in 
  M_{\mu}(\C)\otimes M_2(\C)
\end{equation}
using $\circledast$ via
\[
 \sE{p}(S) = \sum_{j,k=0}^2  p_{j,k}\circledast S_j S_k \in  M_{\mu}(\C) \otimes M_2(\C).
\]
By contrast,
\[
 p(S) =  \sum_{j,k=0}^2 p_{j,k}\otimes S_jS_k \in M_2(\C)\otimes M_{\mu}(\C)\otimes M_2(\C).
\]
However, $p$ and $\sE{p}$ are closely related, as the following lemma
describes. Its proof is similar to that of Lemma \ref{l:astvtensor}.

\begin{lemma}
 \label{l:james-ast}
 With notations as above,
  \[
   \sE{p}(S) = E^*  \left (\sum_{j,k=0}^2 p_{j,k}\otimes S_jS_k\right)\ E 
     = E^* p(S) E.
\]
In particular, if $p(S)\succeq 0$, then $\sE{p}(S)\succeq 0$ too.
 \end{lemma}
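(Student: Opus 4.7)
The plan is to deduce Lemma \ref{l:james-ast} directly from Lemma \ref{l:astvtensor} applied termwise to the expansion of $p$. The key observation is that each $p_{j,k}\in M_2(\C)$ can be viewed as a block $2\times 2$ matrix with $1\times 1$ entries, while each product $S_jS_k\in M_\mu(\C)\otimes M_2(\C)$ is naturally a block $2\times 2$ matrix with $\mu\times\mu$ entries. Thus the pair $(p_{j,k}, S_jS_k)$ falls squarely within the hypotheses of Lemma \ref{l:astvtensor}.

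First I would apply Lemma \ref{l:astvtensor} to each fixed pair $(p_{j,k}, S_jS_k)$, obtaining
\[
p_{j,k}\circledast S_jS_k = E^*\bigl[p_{j,k}\otimes S_jS_k\bigr]E,
\]
where $E = (V_1\otimes W_1\;\;V_2\otimes W_2)$ with $V_i : \C \to \C^2$ the standard basis vectors (determined by the $1\times 1$ block structure of $p_{j,k}$) and $W_i:\C^\mu \to \C^{2\mu}$ the inclusion into the $i$-th diagonal block (determined by the $\mu\times\mu$ block structure of $S_jS_k$). The crucial point, and the only thing to verify carefully, is that this $E$ depends only on the block sizes and hence is the \emph{same} matrix for every $(j,k)$.

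With $E$ independent of $j,k$, I would sum over $j,k\in\{0,1,2\}$ and factor $E^*$ and $E$ out of the sum:
\[
\sE{p}(S) \;=\; \sum_{j,k=0}^{2} p_{j,k}\circledast S_jS_k \;=\; E^*\!\left(\sum_{j,k=0}^{2} p_{j,k}\otimes S_jS_k\right)\!E \;=\; E^*p(S)E.
\]
This is the identity asserted in the lemma. The positivity assertion is then immediate: if $p(S)\succeq 0$, then $\sE{p}(S) = E^*p(S)E$ is a compression of a positive semidefinite matrix and hence positive semidefinite.

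There is no substantive obstacle; the only thing to be careful about is bookkeeping with the block structures to see that a single inclusion $E$ simultaneously implements the Khatri--Rao-vs-Kronecker passage for every monomial $p_{j,k}\otimes S_jS_k$ in the expansion of $p(S)$.
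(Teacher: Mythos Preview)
Your proposal is correct and is essentially the approach the paper has in mind: the paper simply remarks that the proof ``is similar to that of Lemma~\ref{l:astvtensor},'' and your argument makes that similarity explicit by applying Lemma~\ref{l:astvtensor} termwise and observing that the isometry $E$ depends only on the block sizes, hence is the same for every $(j,k)$.
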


Theorem \ref{l:FstarFisM} is the main result of this section. 

\begin{theorem}
\label{l:FstarFisM}
 Suppose $\pp(x)$ is a symmetric $2\times 2$ polynomial of degree at most two in the 
symmetric variables $x=(x_1,x_2).$ If $\sE{\pp}(S)\succeq 0$ for all positive
integers $m,n$ and 
pairs 
$S=(S_1,S_2)\in \bbS_{n+m}^2$ of $2\times 2$ block symmetric matrices,
then there exists an $N\le 12$ and $q_0,q_1,q_2\in M_{N, 2}(\C)$ such that 
\begin{equation*}
\begin{split}
 q_j^* q_k  &= \pp_{j,k}, \ \  1\le j, k\le 2, \\
 q_0^*q_k + q_k^* q_0 &= \pp_{k,0}+\pp_{0,k}, \ \ k=1,2, 
\end{split}
\end{equation*}
\begin{equation}
\label{e:FFM2}
 (q_0^* q_0)_{1,1} = (\pp_{0,0})_{1,1}, \ \ (q_0^*q_0)_{2,2} = (\pp_{0,0})_{2,2}.
\end{equation}
In particular, letting $q$ denote the 
affine linear  polynomial 
  $q=\sum_{j=0}^2  q_jx_j \in \C\langle x\rangle^{N\times 2}$,
 there is an $r_1\in \C$ such that 
\[
 \pp= q^*q  +  r, \quad\text{where }r= \begin{pmatrix} 0 & r_1 \\ r_1^* & 0 \end{pmatrix}.
\]
\end{theorem}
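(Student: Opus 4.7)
The plan is to realize the conclusion as a Gram-matrix positivity statement. Write $\pp=\sum_{j,k=0}^2 \pp_{j,k}\,x_jx_k$ (with $x_0=1$); producing vectors $q_j^{(a)}\in\C^N$ for $j\in\{0,1,2\}$, $a\in\{1,2\}$ (the columns of $q_j\in M_{N,2}(\C)$) satisfying the conclusion is equivalent to exhibiting a $6\times 6$ positive semidefinite matrix $\Gamma$ whose $(j,a),(k,b)$-entry equals $(\pp_{j,k})_{a,b}$ for $j,k\geq 1$, whose $(0,a),(k,b)$-entries (for $k\geq 1$) have prescribed Hermitian part $\tfrac12\bigl((\pp_{0,k})_{a,b}+(\pp_{k,0})_{a,b}\bigr)$, and whose diagonal entries $\Gamma_{(0,a),(0,a)}$ equal $(\pp_{0,0})_{a,a}$. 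The anti-Hermitian parts of the $(0,a),(k,b)$-entries and the $(0,1),(0,2)$-entry of $\Gamma$ are free, the latter producing $r_1$. Once such a $\Gamma\succeq 0$ of rank at most $6$ is exhibited, a Cholesky factorization $\Gamma=Q^*Q$ and partitioning $Q=[q_0\mid q_1\mid q_2]$ with $q_j\in M_{N,2}(\C)$ furnishes the desired $q_j$; the stated bound $N\leq 12$ absorbs slack in a natural construction that splits $\C^N$ into a ``diagonal'' part and a ``cross'' part.

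\textbf{Step 1 (diagonal positivity).} Specialize the hypothesis to $S_j=T_j\oplus T_j$ with $T_0=I$ and $T_1,T_2\in\bbS_n$. Then $S_jS_k=T_jT_k\oplus T_jT_k$ is block-diagonal, so $\sE{\pp}(S)$ is block-diagonal with $(a,a)$-block $\sum_{j,k=0}^2(\pp_{j,k})_{a,a}\,T_jT_k$, and each scalar polynomial $f_a(y)=\sum(\pp_{j,k})_{a,a}\,y_jy_k$ is matrix positive. Helton's sum of squares theorem for matrix-positive quadratic scalar polynomials then yields $M_a:=((\pp_{j,k})_{a,a})_{j,k=0}^2\succeq 0$ for $a=1,2$, pinning down the two diagonal $3\times 3$ blocks of $\Gamma$ (each has rank at most $3$, yielding the ``diagonal'' factor of dimension $\le 6$).

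\textbf{Step 2 (off-diagonal coupling).} To control the cross-block of $\Gamma$, use richer substitutions carrying off-diagonal blocks, such as $S_j=\begin{pmatrix}0&U_j\\U_j^*&0\end{pmatrix}$ (with $S_0=I$) or $S_j=\begin{pmatrix}T_j&U_j\\U_j^*&V_j\end{pmatrix}$; now $\sE{\pp}(S)$ acquires nontrivial off-diagonal blocks whose coefficients are exactly $(\pp_{j,k})_{1,2}$ and $(\pp_{0,j})_{1,2}+(\pp_{j,0})_{1,2}$, coupled through the Schur complement of $\sE{\pp}(S)\succeq 0$ to the diagonal data controlled in Step~1. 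The main obstacle is converting these universally quantified block inequalities into a single PSD condition on $\Gamma$ after choosing the free parameters. I would handle this via Hahn--Banach separation: if no admissible $\Gamma$ were PSD, a separating functional (itself a PSD $6\times 6$ matrix) would certify infeasibility, and reading off this functional one constructs a specific block-symmetric tuple $S$ violating $\sE{\pp}(S)\succeq 0$, contradicting the hypothesis. Once $\Gamma\succeq 0$ is in hand, the residual $r=\pp-q^*q$ is forced by construction to be supported only on the $(1,2)/(2,1)$ off-diagonal of the constant term, yielding the stated form.
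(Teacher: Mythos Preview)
Your reformulation as a $6\times 6$ Gram--matrix SDP feasibility problem is correct, and the separation strategy in Step~2 is conceptually the right move: if you unwind it, the separating functional $D$ is a PSD element of $M_6$ that is orthogonal to the ``free'' directions of your affine space $\mathcal A$, and a short computation shows those orthogonality constraints force $D_{0,k}=D_{k,0}$ to be Hermitian and $D_{0,0}$ to be diagonal --- which is \emph{exactly} the definition of the operator system $\msS\subset M_2(\C)\otimes M_3(\C)$ the paper introduces. So your dual certificate is nothing but a positive element of $\msS$, and the paper's map $\psi:\msS\to M_2(\C)$ is essentially the pairing $D\mapsto$ (the data you are trying to show is $\ge 0$).

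The genuine gap is that Step~2 then asserts ``reading off this functional one constructs a specific block-symmetric tuple $S$ violating $\sE\pp(S)\succeq 0$'' with no mechanism. This is not routine: it is the entire technical content of the paper's Proposition~3.3. There one takes such a positive $D$ (or more generally $Z\in M_n(\msS)_+$), forms the Schur complement of the $D_{0,0}$ block, factors the resulting PSD $4\times 4$ block as $GG^*$, assembles auxiliary matrices $W_j$ from $D_{0,j}$ and the $G$-factors, and finally sets $S_j=\Psi^{-1/2}W_j\Psi^{-1/2}$; only after this construction does one see that $\sE\pp(S)$ controls the value of $\psi$ on $D$. None of this is visible in your sketch, and without it there is no proof. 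The paper packages this as ``$\psi$ is completely positive,'' then applies Arveson extension and Choi's theorem to obtain the Choi matrix $C_\varphi\succeq 0$, from which the $q_j$ are read off.

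Two smaller issues. First, Step~1 and Step~2 are not decoupled: the imaginary parts of $\Gamma_{(0,a),(k,a)}$ are free parameters that must be chosen \emph{jointly} with the off-diagonal data, so establishing $M_a\succeq 0$ for one particular choice does not ``pin down'' the diagonal blocks in a way that feeds into Step~2. Second, separation of a closed cone from an affine subspace can be weak (duality gap), so you would also need a perturbation argument to get strict infeasibility --- the paper handles the analogous issue by first treating $Z\succ 0$ and then passing to the closure.
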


 The remainder of this section is devoted to the proof of
 Theorem \ref{l:FstarFisM}.
  Let $\{e_1,e_2\}$ denote the standard orthonormal basis for $\C^2$
 with resulting matrix units  $e_ae_b^*$ for $1\le a,b\le 2.$
 Let $\as_k$ denote the words in $\sx_1,\sx_2$ of length at most $k$. 
 Thus $\as_1 =\{\sx_0,\sx_1,\sx_2\},$ 
 where, as above,  $\sx_0=1.$
 We will view $\C^3$ as the span of $\as_1$ with $\as_1$ as an orthonormal
 basis and $M_3(\C)$ as matrices indexed by $\as_1 \times \as_1.$
 In this case $x_jx_k^*$ are the matrix units.\looseness=-1

Let \df{$\msS$}  denote the subspace of $M_2(\C)\otimes M_3(\C)$ consisting of 
matrices 
\[
 T = \begin{pmatrix} T_{\alpha,\beta}\end{pmatrix}_{\alpha,\beta\in \as_1},
\]
 where $T_{\alpha,\beta}\in M_2(\C)$ satisfy, for $\beta\in \as_1,$
\[
 T_{\beta,x_0}=T_{x_0,\beta}, \ \ \ T_{x_0,x_0}\in \Span\{e_1e_1^*,e_2e_2^*\}.
\]
 Thus $T_{x_0,x_0}$ is diagonal and 
 $\msS$ is an \df{operator system}; that is, a 
  self-adjoint subspace of $M_2(\C)\otimes M_3(\C)$ that contains the identity. 

 Define $\psi:\msS\to M_2(\C)$ by \index{$\psi$}
\begin{equation}
 \label{def:psi}
 \psi \begin{pmatrix} T_{\alpha,\beta}\end{pmatrix}
  =\sum_{\alpha,\beta\in\as_1} \rho_{\alpha,\beta}\ast T_{\alpha,\beta}
  = \sum_{\alpha,\beta\in\as_1} \rho_{\alpha,\beta}\circledast T_{\alpha,\beta}.
\end{equation}

\begin{proposition}
 \label{p:psi-is-cp}
   The mapping $\psi$ of equation \eqref{def:psi}  is completely positive (cp).
\end{proposition}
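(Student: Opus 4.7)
The plan is to show $\psi$ is completely positive by dilating every positive $T\in M_n(\msS)$ into a product form
\[
T_{\alpha,\beta}\;=\;V^*\,S_\alpha S_\beta\,V \qquad (\alpha,\beta\in\as_1,\ S_0:=I)
\]
with $S_1,S_2\in M_{\tilde n}\otimes M_2$ self-adjoint and $V:\C^n\otimes\C^2\to\C^{\tilde n}\otimes\C^2$ block-diagonal in the outer $M_2$ factor. The Khatri--Rao product $\circledast$ enjoys the compression identity $\pp_{\alpha,\beta}\circledast(V^*YV)=V^*(\pp_{\alpha,\beta}\circledast Y)V$ for $M_2$-block-diagonal $V$ (a block-by-block verification), so summing and applying the hypothesis $\sE{\pp}(S)\succeq 0$ yields
\[
\psi^{(n)}(T)\;=\;\sum_{\alpha,\beta\in\as_1}\pp_{\alpha,\beta}\circledast T_{\alpha,\beta}\;=\;V^*\sE{\pp}(S)\,V\;\succeq\;0.
\]
Since $n$ is arbitrary, this gives complete positivity.

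To produce the dilation, perturb $T\mapsto T+\epsilon(I_n\otimes I_6)$ to reduce to the case $D:=T_{x_0,x_0}$ invertible (note $I_6\in\msS$), letting $\epsilon\to 0^+$ at the end. Then $D=D_1\oplus D_2$ in the diagonal $M_2$ direction, and Schur-complementing $T$ about its $(x_0,x_0)$-block produces, for $j,k\in\{1,2\}$,
\[
T_{x_j,x_k}\;=\;T_{x_0,x_j}\,D^{-1}\,T_{x_0,x_k}\,+\,W_{j,k},\qquad W=(W_{j,k})\succeq 0.
\]
Factor $W=Z^*Z$ with $Z=(Z_1,Z_2)$ a block row, $Z_j:\C^n\otimes\C^2\to\C^{\tilde m}$, and partition $Z_j=(Z_j^{(1)}\mid Z_j^{(2)})$ according to the $M_2$ direction of the source. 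Set $\tilde n=n+\tilde m$, let $V_a=\bigl(\begin{smallmatrix}D_a^{1/2}\\ 0\end{smallmatrix}\bigr):\C^n\to\C^n\oplus\C^{\tilde m}$, $V=V_1\oplus V_2$, and define the self-adjoint $S_j\in M_{\tilde n}\otimes M_2$ via its $(a,b)$-block $S_{j,ab}\in M_{\tilde n}$, written further in the $\C^n\oplus\C^{\tilde m}$ decomposition, as
\[
S_{j,ab}\;=\;\begin{pmatrix}D_a^{-1/2}(T_{x_0,x_j})_{ab}\,D_b^{-1/2} & \delta_{b,1}\,D_a^{-1/2}(Z_j^{(a)})^*\\ \delta_{a,1}\,Z_j^{(b)}\,D_b^{-1/2} & 0\end{pmatrix}.
\]
A direct check confirms $S_{j,ab}^*=S_{j,ba}$, so $S_j=S_j^*$; and $V^*S_\alpha S_\beta V=T_{\alpha,\beta}$, because the $(1,1)\cdot(1,1)$ contribution in $(S_jS_k)_{ab}$ reconstructs $T_{x_0,x_j}D^{-1}T_{x_0,x_k}$ while the $(1,2)\cdot(2,1)$ cross term reconstructs $W_{j,k}$.

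The hard part is this dilation step. There are two distinct $2\times 2$ structures to interleave: the outer $M_2$ tied to the codomain of $\psi$ (with respect to which the $S_j$ must be self-adjoint and $V$ block-diagonal so the $\circledast$ compression identity applies), and the inner $\C^n\oplus\C^{\tilde m}$ arising from the Schur complement, which must both house $T_{x_0,x_j}$ and hide $W$ in the auxiliary coordinates. Once the formula above is written down and its properties are verified, the rest of the argument is formal and scales uniformly with $n$, giving CP.
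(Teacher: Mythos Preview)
Your proof is correct and follows essentially the same approach as the paper: both perturb to make $T_{x_0,x_0}$ invertible, Schur-complement off the $(x_0,x_0)$-block, and use the factored remainder together with the self-adjoint cross terms $T_{x_0,x_j}$ to build hermitian $S_1,S_2$ so that $T_{\alpha,\beta}$ is a block-diagonal compression of $S_\alpha S_\beta$, whence the Khatri--Rao/block-diagonal compatibility and the hypothesis $\sE{\pp}(S)\succeq 0$ finish the job. Your packaging is slightly more streamlined---absorbing the $D^{\pm 1/2}$ into $V$ and $S$ from the outset rather than passing through an intermediate matrix $W$ as the paper does---but the substance is the same.
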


\begin{proof}
To prove that $\psi$ is cp, 
let a positive integer $n$ and positive definite
 $Z\in M_n(\C)\otimes \msS$ be given. In particular,
\[
 Z=\begin{pmatrix} Z_{\alpha,\beta} \end{pmatrix}_{\alpha,\beta\in \langle \sx_1,\sx_2\rangle_1},
\]
 where $Z_{\alpha,\beta}  
 =\begin{pmatrix} (Z_{\alpha,\beta})_{a,b}\end{pmatrix}_{a,b=1}^2 \in M_n(\C)\otimes M_2(\C),$ 
 $(Z_{\alpha,\beta})_{a,b}\in M_n(\C)$ and 
\[
 Z_{x_0,\beta} = Z_{\beta,x_0}, \ \ \ 
   Z_{x_0,x_0}  = \sum_{a=1}^2 (Z_{x_0,x_0})_{a,a} \otimes e_a e_a^*.
\] 
  Since $Z$ is positive definite, $Z_{x_0,\alpha}^*=Z_{x_0,\alpha}$ and
  letting $\Theta= Z_{x_0,x_0}^{-1},$
\[
0\preceq  \begin{pmatrix} Z_{\alpha,\beta} - Z_{{\alpha},x_0} \Theta Z_{x_0,\beta} 
    \end{pmatrix}_{|\alpha|=|\beta|=1}
  = GG^* = \begin{pmatrix} G_{\alpha} G_{\beta}^* \end{pmatrix}_{|\alpha|=|\beta|=1},
\]
 for some $m$ and matrices 
\[
 G_\alpha = \begin{pmatrix} (G_\alpha)_{a,j} \end{pmatrix}_{a,j=1}^2 
   \in M_{n,m}(\C)\otimes M_2(\C).
\]
In particular,  for $1\le a,b\le 2,$
\[
(Z_{\alpha,\beta})_{a,b} 
  - \left [  Z_{\alpha,x_0} \, 
 \begin{pmatrix} \Theta_{1,1} & 0  \\ 0 & \Theta_{2,2}   \end{pmatrix} \, 
     Z_{x_0,\beta}\right ]_{a,b} 
  =    \sum_{j=1}^2 (G_\alpha)_{a,j}\, (G_\beta)_{b,j}^*,
\]
 where $\Theta_{j,j} = (Z_{x_0,x_0})_{j,j}^{-1}.$ Thus, for $|\alpha|=1=|\beta|,$
\[
  \sum_{j=1}^2 (Z_{\alpha,x_0})_{a,j} \Theta_{j,j} (Z_{x_0,\beta})_{j,b} 
   + \sum_{j=1}^2 (G_\alpha)_{a,j}\, (G_\beta)_{b,j}^*
   = (Z_{\alpha,\beta})_{a,b}.
\]

Let 
\[
 \Psi = \begin{pmatrix} \Psi_{1,1} & 0 \\ 0 & \Psi_{2,2} \end{pmatrix}
  \in M_{n+m}(\C)\otimes M_2(\C),
  \quad\text{where}\quad
\Psi_{a,a} = \begin{pmatrix} (Z_{x_0,x_0})_{a,a} & 0 \\ 0 & I_m \end{pmatrix}
  \in M_{n+m}(\C).
\]
 Let, for $j=1,2,$
\begin{equation}
\label{eq:Wj}
 W_j = \begin{pmatrix} (W_j)_{a,b} \end{pmatrix} \in M_{n+m}(\C)\otimes M_2(\C),
 \quad \text{where}\quad
 (W_j)_{a,b} =\begin{pmatrix} (Z_{x_0,x_j})_{a,b} & (G_{x_j})_{a,b} \\ 
   (G_{x_j})_{b,a}^* & 0 \end{pmatrix} \in M_{n+m}(\C).
\end{equation}
 Since $Z_{\alpha,x_0}=Z_{x_0,\alpha}$ is self-adjoint, so is $W_j.$  By construction, 
\[
 (W_j \Psi^{-1} W_k)_{a,b}  = 
  \begin{pmatrix} (Z_{x_j,x_k})_{a,b} & * \\ * & * \end{pmatrix} \in M_{n+m}(\C).
\]

Let
\[
  W =\begin{pmatrix} \Psi  & W_1 & W_2 \\ W_1 & W_1\Psi^{-1} W_1 & W_1\Psi^{-1} W_2\\
   W_2 & W_2\Psi^{-1} W_1 & W_2\Psi^{-1} W_2 \end{pmatrix} 
   \in M_{n+m}(\C)\otimes \msS
\]
and let   $V\in M_{2(n+m),2n}(\C)$ denote the isometry  whose adjoint is
 \[
V^* = \begin{pmatrix} I_n & 0 & 0 & 0\\ 0&0&I_n&0 \end{pmatrix} 
  \in M_{2n,2(n+m)}(\C),
\]

 From the definition~\eqref{def:psi} of $\psi$ (and letting $\psi$ also denote
 its ampliations $\psi\otimes I_\ell,$ where $I_\ell$ is the identity on $M_\ell(\C)$),
\begin{equation}
\label{e:psiW}
 \psi(W) = \rho_{x_0,x_0}\circledast \Psi + \rho_{x_0,x_1}\circledast W_1 + 
  \rho_{x_0,x_2}\circledast W_2 + \sum_{j,k=1}^2 \rho_{x_j,x_k}\circledast W_j \Psi^{-1} W_k.
\end{equation}
 By definition of the $\circledast$ operation, given
\begin{equation} \label{e:more}
\begin{split}
 R& =\begin{pmatrix} R_{1,1}&R_{1,2}\\R_{2,1}&R_{2,2}\end{pmatrix} \in M_{n+m}(\C)\otimes M_2(\C)\\
 R_{i,j} & = \begin{pmatrix} R_{i,j}^{1,1} & R_{i,j}^{1,2} \\R_{i,j}^{2,1} & R_{i,j}^{2,2} \end{pmatrix} 
   \in M_n(\C)\oplus M_m(\C)\\
 \tau & = \begin{pmatrix} \tau_{1,1} & \tau_{1,2}\\ \tau_{2,1}&t_{2,2}\end{pmatrix}\in M_2(\C)
\end{split}
\end{equation}
 we have  $\tau\circledast R =\begin{pmatrix} \tau_{i,j}  R_{i,j} \end{pmatrix}$
and hence
\[
  V^* \, [ \tau \circledast R] \, V
 =  \begin{pmatrix} \tau_{i,j}  R_{i,j}^{1,1} \end{pmatrix} = \tau\circledast \widetilde{R},
\]
 where $\widetilde{R} = \begin{pmatrix} R_{i,j}^{1,1} \end{pmatrix}_{i,j=1}^2.$
Hence,
\[
\begin{split}
 V^*  \, \left [\rho_{x_0,x_0}\circledast \Psi\right ] \, V
  & = \rho_{x_0,x_0}\circledast Z_{x_0,x_0}\\
V^*  \, \left [ \rho_{x_0,x_j} \circledast W_j  \right ] \, V
  & = \rho_{x_j,x_k} \circledast Z_{x_j,x_k} \\
V^*\,  \left [\rho_{x_j,x_k}\circledast W_j\Psi^{-1}W_k \right]
  \, V & = \rho_{x_j,x_k} \circledast Z_{x_j,x_k}.
\end{split}
\]
Thus, from equation~\eqref{e:psiW} 
\[
 V^* \psi(W) V = \psi(Z).
\]
Hence, to prove $\psi(Z)\succeq 0$ it suffices to show $\psi(W)\succeq0.$

 With $R$ and $\tau$ as in equation~\eqref{e:more}, given a  block diagonal matrix
\[
 D=\begin{pmatrix} D_1 & 0 \\0 & D_2 \end{pmatrix} \in M_{n+m}(\C)\otimes M_2(\C),
\]
 we have
\[
\begin{split}
D\,  \left [ \tau\circledast R \right ] \, D 
& = \begin{pmatrix} D_1 & 0 \\0 & D_2 \end{pmatrix} \, 
   \begin{pmatrix} \tau_{i,j} R_{i,j} \end{pmatrix}  \, 
 \begin{pmatrix} D_1 & 0 \\0 & D_2 \end{pmatrix}\\
& = \begin{pmatrix} \tau_{i,j}  D_i R_{i,j} D_j \end{pmatrix}
 = \tau\circledast (DRD).
\end{split}
\]
Hence, 
 $S_j =\Psi^{-\frac12} W_j \Psi^{-\frac12} \in M_{n+m}(\C) \otimes M_2(\C)$ are self-adjoint and 
\[
\begin{split}
\Psi^{-\frac12} \,  \psi(W)\, \Psi^{-\frac12}
 & = \sum_{j,k}  \Psi^{-\frac12}  \, \left [
 \rho_{j,k}\circledast W_{j,k}\right ]\,  \Psi^{-\frac12}\\
 & =   \sum_{j,k} \rho_{j,k}\circledast S_jS_k 
   =  \mathcal{E}\rho(S).
\end{split}
\]
By hypothesis $\mathcal{E}\rho(S)\succeq 0$
and hence  $\psi(W)\succeq 0.$ Thus
 $\psi(Z)\succeq0$ under the extra
 assumption that $Z\succ0.$

Now suppose  $Z\in M_n(\C)\otimes \msS$ is positive semidefinite.
Since the identity is contained in $M_n(\C)\otimes \msS,$
for each $\epsilon >0,$ the matrix $Z+\epsilon I$
 is positive definite and in $M_n(\C)\otimes \msS.$
 Thus, by what has already been proved, $\psi(Z+\epsilon I)\succeq 0$
 and hence, by letting $\epsilon$ tend to $0,$ it follows
 that $\psi(Z)\succeq0$ and the proof is complete.
\end{proof}

\begin{proof}[Proof of Theorem~\ref{l:FstarFisM}]
 Since, by Proposition \ref{p:psi-is-cp},  $\psi$ is cp it extends, 
 by the Arveson Extension Theorem \cite[Theorem 7.5]{paulsen}, 
 to a  cp map $\varphi:M_2(\C)\otimes M_3(\C)\to M_2(\C)$. By a well-known result
of Choi \cite[Theorem 3.14]{paulsen}, its Choi matrix 
\[
 C_{\varphi} = \sum_{j,k=0}^2 \sum_{a,b=1}^2 [e_a e_b^*\otimes \sx_j \sx_k^*]\, \otimes \, 
   [\varphi(e_a e_b^*\otimes \sx_j \sx_k^*)] \in M_2(\C)\otimes M_3(\C)\otimes M_2(\C)
\]
is positive semidefinite.  In particular, $C_{\varphi}$ factors as
$F^*F$ where, 
\[
 F= \sum_{a=1}^2 \sum_{j=1}^3    \etwo_a^*\otimes \ethree_j^*\otimes F_{j,a}
\]
for some $N$ ($\le 12$) 
 and $N\times 2$ matrices $F_{j,a}$ and, in particular,
\begin{equation}
\label{e:ifcp1}
 F_{j,a}^*F_{k,b} = \varphi(e_a e_b^*\otimes \sx_j \sx_k^*).
\end{equation}

For
$
 q_j =\begin{pmatrix} F_{j,1} \etwo_1 & F_{j,2}\etwo_2 \end{pmatrix} 
  \in M_{N,2}(\C),
  $ we have
$
 q_j^* q_k = \begin{pmatrix}  e_a^* F_{j,a}^* F_{k,b}e_b \end{pmatrix}_{a,b=1}^2 
  \in M_2(\C).$
 So, using \eqref{e:ifcp1}, for $a=1,2$,
\[
(\pp_{0,0})_{a,a} =  \left (\pp_{0,0} \circledast e_a e_a^*\right )_{a,a}
 =  \psi(e_a e_a^*\otimes \sx_0 \sx_0^*)_{a,a} 
 =   \varphi(e_a e_a^*\otimes \sx_0 \sx_0^*)_{a,a} 
  = e_a^* F_{0,a}^* F_{0,a} e_a = (q_0^*q_0)_{a,a}.
\]
Hence equation \eqref{e:FFM2} holds.  Next, for $\ell =1,2$ and $1\le a,b\le 2$,
\[
\begin{split}
 (\pp_{0,\ell} + \pp_{\ell,0})_{a,b}  
  &= e_a^* \left [(\pp_{0,\ell} + \pp_{\ell,0})\circledast e_a e_b^*\right ] e_b
  = e_a^* \psi\left (e_a e_b^*\otimes (\sx_0 \sx_\ell^*+\sx_\ell\sx_0^* )\right )e_b \\
  &=  e_a^* \varphi\left(e_a e_b^*\otimes (\sx_0 \sx_\ell^*+ \sx_\ell \sx_0^*)\right )e_b 
  = e_a^* [F_{0,a}^* F_{\ell,b}+F_{\ell,a}^* F_{0,b}]e_b \\
 &= (q_0^* q_\ell + q_\ell^* q_0)_{a,b}.
\end{split}
\]
Thus $q_0^*q_\ell + q_\ell^*q_0  = \pp_{0,\ell}+\pp_{\ell,0}.$

Finally, we see that $q_j^*q_k  = \pp_{j,k}$ (for $1\le j,k\le 2$) by computing,
 for $1\le a,b\le 2,$ 
\[
\begin{split}
(\pp_{j,k})_{a,b} &= e_a^* [\pp_{j,k}\circledast e_a e_b^*] e_b 
= e_a^*\psi(e_a e_b^*\otimes \sx_j \sx_k^*)e_b \\
&= e_a^*\varphi(e_a e_b^*\otimes \sx_j \sx_k^*)e_b 
=  e_a^*F^*_{j,a}F_{k,b}e_b 
=  (q_j^*q_k)_{a,b}. \qedhere
\end{split} 
\]
\end{proof}

\section{The characterization of $xy$-convex polynomials}
\label{sec:xypolys}
 In this section we prove Theorem \ref{t:introxyconvexp}. In 
 Subsection \ref{sec:xyimpliesxandy} it is established that
 $xy$-convex polynomials are biconvex (convex in $x$ and $y$
 separately). Two applications of equation \eqref{e:wurzpoly} of 
 Theorem \ref{t:wurzelification} then significantly reduce the complexity of the
 problem of characterizing $xy$-convex polynomials. The notion of the $xy$-Hessian
 of a polynomial  is introduced in Subsection
 \ref{sec:xyHess},  where a \df{border vector-middle matrix}
 (see for instance \cite{HKMfrg})   representation for this Hessian is 
 established. Further, it is shown 
 that this middle matrix is positive for $xy$-convex polynomials.
 The proof of Theorem \ref{t:introxyconvexp} concludes in 
  Subsection \ref{sec:proofxyconvexp} by combining positivity 
 of the middle matrix and  Theorem \ref{l:FstarFisM}.

\subsection{$xy$-convexity implies biconvexity}
\label{sec:xyimpliesxandy}
The notion of $xy$-convexity for polynomials has a convenient concrete reformulation.

\begin{proposition}
 \label{p:xy-convexp-alt}
  A triple $((X,Y),V)$ is an $xy$-pair if and only if, up to unitary equivalence,
  it has the block form
\begin{equation}
\label{e:xypairform}
 X=\begin{pmatrix} X_0 & A&0\\ A^* & * & * \\ 0 & * & * \end{pmatrix}, \ \
 Y= \begin{pmatrix} Y_0 & 0&C\\ 0& *&*\\ C^* & * & * \end{pmatrix}, \ \ 
 V= \begin{pmatrix} I& 0&0\end{pmatrix}^*.
\end{equation}
 Thus, a polynomial $p(x,y)\in M_\mu(\C\axy)$ is $xy$-convex 
  if and only if for each $xy$-pair $((X,Y),V)$ of the form of 
 equation \eqref{e:xypairform}, we have
\[
 \IV^*p(X,Y)\IV - p(X_0,Y_0) \succeq 0.
\]
\end{proposition}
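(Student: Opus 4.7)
The plan is to normalize $V$ by unitary equivalence, decode the $xy$-pair condition as a single vanishing relation on off-diagonal blocks of $X$ and $Y$, and then refine the resulting $2\times 2$ block decomposition into the stated $3\times 3$ one.

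First I would replace $V$ by $UV$ for an appropriate unitary $U$ on the codomain so that $V$ becomes the canonical inclusion of $\C^k=\ran V$ into $\C^n$. Writing $X$ and $Y$ as $2\times 2$ block self-adjoint matrices with respect to $\ran V\oplus \ran V^\perp$,
\[
X=\begin{pmatrix} X_0 & X_{12}\\ X_{12}^* & X_{22}\end{pmatrix},\qquad
Y=\begin{pmatrix} Y_0 & Y_{12}\\ Y_{12}^* & Y_{22}\end{pmatrix},
\]
one reads off $V^*XV=X_0$, $V^*YV=Y_0$, and $V^*(XY)V=X_0Y_0+X_{12}Y_{12}^*$. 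Thus the commuting identity defining an $xy$-pair collapses to the single condition $X_{12}Y_{12}^*=0$, equivalently $\ran Y_{12}^*\subseteq \ker X_{12}$.

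Second, I would further decompose $\ran V^\perp=(\ker X_{12})^\perp\oplus \ker X_{12}$ via a unitary acting trivially on $\ran V$. In these refined coordinates $X_{12}=\begin{pmatrix}A & 0\end{pmatrix}$ for some $A$, and the inclusion $\ran Y_{12}^*\subseteq \ker X_{12}$ forces $Y_{12}=\begin{pmatrix}0 & C\end{pmatrix}$ for some $C$. Expanding the $2\times 2$ block presentations of $X$ and $Y$ as $3\times 3$ blocks with respect to this finer decomposition yields exactly the form \eqref{e:xypairform}; the $2\times 2$ submatrices $X_{22}$ and $Y_{22}$ carry no further constraint because they never enter $V^*(XY)V$. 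The converse is an immediate computation: for any $X,Y,V$ of the form \eqref{e:xypairform}, the $(1,1)$ block of $XY$ equals $X_0Y_0+A\cdot 0+0\cdot C^* = X_0Y_0$, so $V^*XYV=(V^*XV)(V^*YV)$.

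For the polynomial reformulation, unwinding the definition of $xy$-convexity gives that $p$ is $xy$-convex if and only if $\IV^*p(X,Y)\IV-p(V^*XV,V^*YV)\succeq 0$ for every $xy$-pair $((X,Y),V)$. Since this inequality is invariant under the simultaneous unitary conjugations used above, the first part of the proposition reduces the universal check to $xy$-pairs in the canonical shape \eqref{e:xypairform}, for which $V^*XV=X_0$ and $V^*YV=Y_0$, yielding the stated form of the $xy$-convexity condition. The main obstacle is purely bookkeeping: tracking the two successive unitary normalizations on $\ran V^\perp$ and confirming that the remaining entries of $X_{22}$ and $Y_{22}$ are indeed unconstrained.
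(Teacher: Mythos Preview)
Your proposal is correct and follows essentially the same route as the paper: normalize $V$ to the inclusion of $\ran V$, read the $xy$-pair condition as the vanishing of the product of off-diagonal blocks, and then split $\ran V^\perp$ so that those blocks become $\begin{pmatrix}A&0\end{pmatrix}$ and $\begin{pmatrix}0&C\end{pmatrix}$. The paper is terser about the second step (it simply asserts the block form up to unitary equivalence), whereas you make the choice $\ran V^\perp=(\ker X_{12})^\perp\oplus\ker X_{12}$ explicit; this is a harmless and natural refinement of the same argument.
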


\begin{proof}
 Observe that $(X_0,Y_0)=V^*(X,Y)V$ and  $((X,Y),V)$
  is an $xy$-pair; that is $V^* YXV = V^* Y VV^*  X V$. 
 Thus, if $p$ is $xy$-convex, then
\[
 0\preceq  \IV^* p(X,Y)\IV - p(V^*(X,Y)V)  = \IV^* p(X,Y) \IV -p(X_0,Y_0).
\]

To establish the reverse implication, given an  $xy$-pair $((X,Y),V),$
 decompose the space $(X,Y)$ act upon as $\range V \oplus (\range V)^\perp$
 and note that, with respect to this orthogonal decomposition,
  $X$ and $Y$ have the block form 
\[
X=\begin{pmatrix} X_0 & \alpha \\ \alpha^* & \beta \end{pmatrix}, \ \
Y = \begin{pmatrix} Y_0 & \gamma \\ \gamma^* & \delta \end{pmatrix},
\]  where $X_0,Y_0,\beta,\delta$ are hermitian. 
 The relation $V^* YXV = V^* Y VV^*  X V$ implies  
 $\alpha \gamma^*=0.$
  But then, $\alpha$ and $\gamma$ are, up to unitary equivalence,
 of the form $\begin{pmatrix}A & 0\end{pmatrix}$
 and $\begin{pmatrix}0 & C\end{pmatrix}$, respectively. 
\end{proof}

Consider the following list of monomials: \index{$\listL$}
\begin{equation}
 \label{eq:list}
  \listL= \{1,x,y,x^2,y^2,xy,yx,xy^2,y^2x,x^2y,yx^2,xyx,yxy, xyxy,yxyx,xy^2x,yx^2y\}.
\end{equation}

\begin{proposition}
 \label{p:HHLM}
   If $p\in \C\langle x,y\rangle$ is convex in both $x$ and $y$ (separately), then 
 $p$ has degree at most two in both $x$ and $y$ (separately) and
   $p$ contains no monomials of the form $x^2y^2$ or $y^2x^2$, only the monomials in 
 the set $\listL.$
\end{proposition}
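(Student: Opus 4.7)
The plan is to derive the degree bounds from Corollary~\ref{c:xcvxINTRO} and then rule out $x^2y^2$ and $y^2x^2$ by exploiting positivity of the middle matrix $w$ in the factored representation of $p$.

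First I would apply Corollary~\ref{c:xcvxINTRO} with $(a,x)=(y,x)$ and $\cD=\bbS^1\times\bbS^1$ (a \robust free set that is open and convex in $x$). This produces polynomials $\ell$, $w$, $f\!\!\!f$ with
\[
p=\ell(y,x)^*\,w(y)\,\ell(y,x)+f\!\!\!f(y,x),
\]
where $\ell$ is linear in $x$, $f\!\!\!f$ is affine linear in $x$, and $w(y)$ is symmetric with $w(Y)\succeq 0$ for every hermitian~$Y$ (since $\pi_a(\cD)=\bbS^1$). Because $x$ is a single variable, the ``$x$-on-the-left'' normal form $\ell(y,x)=x\,\ell_*(y)$ coming from equation~\eqref{e:ell} (available in the polynomial case by Theorem~\ref{t:wurzelification}~\eqref{i:wurz-poly}) gives $\deg_x p\le 2$; the corollary applied with $x$ and $y$ interchanged gives $\deg_y p\le 2$. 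A direct enumeration of words in $x,y$ of $x$- and $y$-degree at most two shows $p$ is supported on $\listL\cup\{x^2y^2,y^2x^2\}$, so the remaining task is to kill the coefficients $c_{x^2y^2}$ and $c_{y^2x^2}$.

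Next I would expand the degree-two-in-$x$ part. Writing $\ell_*(y)=\sum_{i\ge 0}y^iV_i$ and $w(y)=\sum_{k\ge 0}y^kW_k$ (with $V_i\in\C^{N}$ and $W_k=W_k^*\in M_N(\C)$),
\[
\ell_*(y)^*\,x\,w(y)\,x\,\ell_*(y)=\sum_{i,k,j\ge 0}(V_i^*W_kV_j)\,y^ixy^kxy^j,
\]
a sum over pairwise distinct monomials. Since $f\!\!\!f$ contributes nothing of $x$-degree $\ge 2$, the coefficient of $y^ixy^kxy^j$ in $p$ is precisely $V_i^*W_kV_j$, and $\deg_y p\le 2$ forces $V_i^*W_kV_j=0$ whenever $i+k+j\ge 3$. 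In particular $V_2^*W_0V_2=0$. Since $W_0=w(0)\succeq 0$, the Gram matrix $V^*W_0V$ with $V=[V_0\mid V_1\mid V_2\mid\cdots]$ is positive semidefinite, and its $(2,2)$ entry vanishes. The standard observation that a positive semidefinite matrix with a zero diagonal entry has the entire corresponding row and column zero then gives $V_0^*W_0V_2=V_2^*W_0V_0=0$, which are exactly $c_{x^2y^2}$ and $c_{y^2x^2}$.

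The main subtlety will be confirming that the ``$x$-on-the-left'' normal form $\ell=x\,\ell_*(y)$ of Theorem~\ref{t:wurzelification}~\eqref{i:wurz-poly} really is available in the polynomial setting, since this normal form is what makes the expansion above a sum over \emph{distinct} monomials with unambiguous scalar coefficients $V_i^*W_kV_j$. Once that is in hand, everything reduces to the elementary Gram-matrix trick in the previous paragraph.
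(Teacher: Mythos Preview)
Your argument is correct and follows the same route as the paper: both invoke the polynomial case of Theorem~\ref{t:wurzelification} (equivalently Corollary~\ref{c:xcvxINTRO}) to get the degree bounds and the normal form $p=\ell_*(y)^*\,x\,w(y)\,x\,\ell_*(y)+f\!\!\!f(y,x)$ with $\ell(y,x)=x\,\ell_*(y)$ as in \eqref{e:ell}, and then use this representation to exclude $x^2y^2$ and $y^2x^2$. The paper simply asserts that the representation \eqref{e:wurzpoly} together with \eqref{e:ell} rules out those two monomials; your Gram-matrix step (from $V_2^*W_0V_2=0$ and $W_0=w(0)\succeq 0$ to $W_0V_2=0$, hence $V_0^*W_0V_2=0$) is exactly the detail needed to make that assertion precise. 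Your concern about the availability of the $x$-on-the-left form is resolved by \eqref{e:ell}, which is part of the polynomial statement in Theorem~\ref{t:wurzelification}\,\eqref{i:wurz-poly}.
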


\begin{proof}
 The degree bounds follow from Theorem \ref{t:wurzelification}. The representation
 of $p$ in \eqref{e:wurzpoly} and that of $\ell$ in \eqref{e:ell}
  imply $p$ does not contain the monomials $x^2y^2$ and $y^2x^2.$
\end{proof}

 Let \df{$\PL$} denote the $\C$-vector space with basis $\listL$
 of equation \eqref{eq:list}.

\begin{lemma}
\label{l:xyconveximpliesbiconvex}
 If $p\in \C\langle x,y\rangle$ is $xy$-convex, 
 then $p$ is convex in  both $x$ and $y$. Hence $p\in \PL.$
\end{lemma}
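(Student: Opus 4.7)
The plan is to derive convexity in $x$ and convexity in $y$ separately from $xy$-convexity, and then invoke Proposition~\ref{p:HHLM} to conclude $p \in \PL$. By Proposition~\ref{p:a2convex} applied to $\cD = \bS^1 \times \bS^1$ (which is trivially closed under restriction to reducing subspaces and $a^2$-convex), convexity of $p$ in $x$ is equivalent to $y^2$-convexity of $p$, and convexity in $y$ is equivalent to $x^2$-convexity. So it suffices to verify the $y^2$- and $x^2$-convexity inequalities for all eligible isometries $V$.

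For the $x$ direction, let $V$ be an isometry with $V^*Y^2V = (V^*YV)^2$, and set $P = VV^*$. Because $Y$ is hermitian, this identity is equivalent to $(I-P)YV = 0$, i.e., $YV = V(V^*YV)$. Multiplying on the left by $V^*X$ gives
\[
V^*(XY)V \;=\; V^*X\cdot YV \;=\; V^*X\cdot V(V^*YV) \;=\; (V^*XV)(V^*YV),
\]
so $((X,Y),V)$ is an $xy$-pair. Applying the $xy$-convexity hypothesis yields $V^*p(X,Y)V \succeq p(V^*(X,Y)V)$, which is exactly the $y^2$-convexity inequality. Hence $p$ is convex in $x$.

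For the $y$ direction the argument is symmetric: if $V^*X^2V = (V^*XV)^2$, then $XV = V(V^*XV)$, so $V^*(YX)V = (V^*YV)(V^*XV)$; taking adjoints and using that $X$ and $Y$ are hermitian converts this to $V^*(XY)V = (V^*XV)(V^*YV)$, producing an $xy$-pair, and $xy$-convexity then gives the $x^2$-convexity inequality. Combining, $p$ is convex in both $x$ and $y$, so Proposition~\ref{p:HHLM} yields $p \in \PL$. No real obstacle arises in the argument; the one point worth flagging is that although the $xy$-pair condition is asymmetrically stated via the product $XY$, the hermitianness of $X$ and $Y$ lets one pass from $YX$ to $XY$ by taking adjoints, which is precisely what enables the argument to treat both variables symmetrically.
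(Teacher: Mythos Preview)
Your proof is correct but takes a different route from the paper. The paper argues directly: given $(X_1,Y)$ and $(X_2,Y)$, it observes that $((X_1\oplus X_2,\,Y\oplus Y),V)$ with $V=\tfrac{1}{\sqrt{2}}\begin{pmatrix} I\\ I\end{pmatrix}$ is an $xy$-pair, and the $xy$-convexity inequality immediately collapses to midpoint convexity $p\bigl(\tfrac{X_1+X_2}{2},Y\bigr)\preceq \tfrac12\bigl(p(X_1,Y)+p(X_2,Y)\bigr)$; symmetry handles $y$, and Proposition~\ref{p:HHLM} finishes. Your argument instead shows the more general fact that any isometry $V$ whose range reduces $Y$ (resp.\ $X$) automatically yields an $xy$-pair, so $xy$-convexity implies $y^2$- (resp.\ $x^2$-)convexity, and then you appeal to Proposition~\ref{p:a2convex} to convert that into partial convexity. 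Your approach is conceptually cleaner in that it makes explicit the inclusion of reducing-subspace pairs among $xy$-pairs, but it leans on the extra machinery of Proposition~\ref{p:a2convex}; the paper's version is more elementary and self-contained, needing only the single explicit isometry.
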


\begin{proof}
 Given $(X_1,Y)$ and $(X_2,Y)$, let 
$V=\frac{1}{\sqrt{2}}\begin{pmatrix} I & I\end{pmatrix}^T$
 and note $((X_1\oplus X_2, Y\oplus Y),V)$ is an $xy$-pair. Since $p$ is $xy$-convex,
\[
 p\Big (\frac{X_1+X_2}{2},Y\Big ) = p(V^*(X,Y)V) \preceq V^* p(X,Y)V 
   = \frac{1}{2} \big(p(X_1,Y)+p(X_2,Y)\big )
\]
 Thus $p$ is convex in $x.$ By symmetry $p$ is convex in $y.$ The conclusion
 of the lemma now follows from Proposition \ref{p:HHLM}. 
\end{proof}

\subsection{The $xy$-Hessian}
\label{sec:xyHess}
 In view of Lemma \ref{l:xyconveximpliesbiconvex},  we now 
 consider only symmetric polynomials $p\in \PL.$
 Let 
$\{s_0,t_0,\alpha,\beta_j,\gamma,\delta_j: 0\le j \le 2\}$ denote freely 
noncommuting variables with $s_0,t_0,\beta_0,\beta_2,\delta_0,\delta_2$ 
symmetric. Let, in view of Proposition \ref{p:xy-convexp-alt},
\begin{equation*}
 s = \begin{pmatrix} s_0 & \begin{pmatrix} \alpha  & 0 \end{pmatrix} \\
   \begin{pmatrix} \alpha^* \\ 0 \end{pmatrix} & \begin{pmatrix} \beta_0 & \beta_1 \\
   \beta_1^* & \beta_2 \end{pmatrix} \end{pmatrix}, \quad
 t= \begin{pmatrix} t_0 & \begin{pmatrix} 0 & \gamma \end{pmatrix} \\
   \begin{pmatrix} 0\\ \gamma^*  \end{pmatrix} & \begin{pmatrix} \delta_0 & \delta_1 \\
   \delta_1^* & \delta_2 \end{pmatrix} \end{pmatrix},
   \quad V= \begin{pmatrix} 1 & 0 & 0\end{pmatrix}^*.
\end{equation*}
The \df{$xy$-Hessian}
 of   $p\in \C\langle x,y\rangle,$ denoted 
 \index{$\Hxy{p}$},   is the quadratic in $\alpha,\gamma$
 part of $V^* p(s,t)V-p(V^*(s,t)V) =V^* p(s,t)V - p(s_0,t_0).$ In
 particular, for  $p\in \PL,$
\[
  \Hxy{p}:= V^* p(s,t)V  -  p(V^*(s,t)V) =V^* p(s,t)V - p(s_0,t_0).
\]
The proof of the following lemma is routine. 

\begin{lemma}
\label{l:xyHessian}
 If  $p= \sum_{u\in \listL} p_u u \in \PL,$ then $\Hxy{p}$
 is a function of 
 $\{\alpha,\gamma,s_0,t_0,\delta_0,\delta_1, \beta_1,\beta_2\}$
with  the explicit form
\begin{equation*}
\begin{split}
\Hxy{p}&= [p_{x^2}\alpha \alpha^* +  p_{y^2}\gamma\gamma^*] + [p_{xyx} \alpha \delta_0\alpha^* + p_{yxy}\gamma \beta_2\gamma^*
  +  p_{xy^2}(s_0\gamma \gamma^* + \alpha \delta_1\gamma^*) \\
 &\phantom{=} + p_{y^2x}(\gamma \gamma^*s_0+\gamma \delta_1^* \alpha^*)
 +  p_{x^2y}(\alpha\alpha^*t_0+\alpha\beta_1 \gamma^*) + p_{yx^2} (t_0 \alpha\alpha^*+ \gamma\beta_1^*\alpha^*)]\\
&\phantom{=} + [p_{xy^2x}(s_0\gamma\gamma^*s_0+\alpha\delta_1\gamma^*s_0 + s_0 \gamma \delta_1^* \alpha^* + \alpha (\delta_0^2+\delta_1\delta_1^*) \alpha^*) \\
&\phantom{=} + p_{xyxy} (\alpha\delta_0\alpha^*t_0+\alpha\delta_0\beta_1\gamma^*+s_0\gamma\beta_2\gamma^*+\alpha\delta_1\beta_2\gamma^*)\\ 
&\phantom{=} +p_{yxyx} (t_0\alpha\delta_0\alpha^*+\gamma\beta_1^*\delta_0\alpha^*+\gamma\beta_2\gamma^*s_0+\gamma\beta_2\delta_1^*\alpha^*)\\
&\phantom{=} + p_{yx^2y} (t_0\alpha\alpha^*t_0+\gamma\beta_1^*\alpha^*t_0+t_0\alpha\beta_1\gamma^*+\gamma(\beta_1^*\beta_1+\beta_2^2)\gamma^*)] \\
&= \, \alpha \left [ p_{x^2} + p_{xyx}\delta_0 + p_{xy^2x}(\delta_0^2+\delta_1\delta_1^*)
       \right ]\alpha^*
  +\alpha \left [p_{xy^2} +p_{xyxy} \delta_0  \right ] \alpha^* t_0
  + t_0\alpha \left [p_{yx^2}+p_{yxyx}\delta_0 \right ] \alpha^* \\
  &\phantom{=} + \alpha \left [p_{xy^2}\delta_1 +p_{x^2y}\beta_1 +p_{xyxy} (\delta_0\beta_1
      + \delta_1\beta_2)     \right ] \gamma^* \\
  &\phantom{=}  +  \gamma \left [ p_{y^2x} \delta_1^* + p_{yx^2} \beta_1^* 
     +p_{yxyx}(\beta_1^*\delta_0 +\beta_2^* \delta_1)  \right ] \alpha^* \\
  &\phantom{=} +\alpha \left [ p_{xy^2x} \delta_1   \right]\gamma^* s_0 
  +  s_0 \gamma \left [ p_{xy^2x} \delta_1^* \right] \alpha^* 
  +  t_0 \alpha \left [p_{yx^2y}   \right] \alpha^* t_0 
  + t_0\alpha \left[p_{yx^2y} \beta_1  \right] \gamma^* \\
  &\phantom{=} +\gamma \left [ p_{yx^2y} \beta_1^* \right]\alpha^* t_0 
   + \gamma \left [ p_{y^2} + p_{yxy}\beta_2 + p_{yx^2y}(\beta_1^*\beta_1+\beta_2^2)
      \right ] \gamma^*
   + \gamma \left [p_{y^2x} +p_{yxyx}\beta_2   \right ] \gamma^* s_0 \\
 &\phantom{=} + s_0\gamma \left [p_{xy^2} + p_{xyxy} \beta_2 \right ] \gamma^* 
   +  s_0\gamma \left [ p_{yx^2y} \right ] \gamma^* s_0.
\end{split}
\end{equation*}
\end{lemma}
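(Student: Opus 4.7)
The plan is to verify the formula by direct block expansion of each monomial in $\listL$, which is the "routine" computation the statement advertises. Since $V$ is the inclusion of the first block of the block-$3\times 3$ structure on which $s$ and $t$ act, the expression $V^* M V$ equals the $(1,1)$-block of any such $M$. Thus for each word $u\in\listL$, I would expand $V^* u(s,t) V$ as a sum over block-index paths $1\to i_1\to\cdots\to i_{k-1}\to 1$ (where $k$ is the length of $u$), each contributing the corresponding product of entries of $s$ or $t$ as dictated by the letters of $u$. The path that stays at $1$ throughout contributes exactly $u(s_0,t_0)$, so subtraction isolates the "non-trivial" path contributions.

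Next, I would observe that the only entries of $s$ carrying $\alpha$ or $\alpha^*$ are $s_{12}$ and $s_{21}$, and the only entries of $t$ carrying $\gamma$ or $\gamma^*$ are $t_{13}$ and $t_{31}$; moreover $s_{13}=s_{31}=0$ and $t_{12}=t_{21}=0$. A block-index path contributes a term of degree $d$ in $\{\alpha,\alpha^*,\gamma,\gamma^*\}$ iff it uses $d$ of these four special off-diagonal entries. For any $u\in\listL$ a sparsity check rules out degree-four contributions: for example, in $u=xyxy$ a quartic path would have the form $1\to v_1\to 1\to v_3\to 1$ with $v_1,v_3\ne 1$, but $s_{1v_1}\ne 0$ forces $v_1=2$, and then $t_{v_1 1}=t_{21}=0$; the analogous sparsity argument kills such paths for $xy^2x$ and $yx^2y$. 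Since words of length $\le 3$ cannot produce quartic paths either, this verifies that $V^*p(s,t)V-p(s_0,t_0)$ is already the quadratic-in-$(\alpha,\gamma)$ part for all $p\in\PL$, justifying the identity used to define $\Hxy{p}$ in the statement.

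With that reduction in place, what remains is a term-by-term enumeration: for each $u\in\listL$ list the surviving non-trivial paths and read off the monomial they contribute. For instance, $u=xyx$ yields only the path $1\to 2\to 2\to 1$, producing $\alpha\delta_0\alpha^*$; $u=x^2y$ yields $1\to 2\to 1\to 1$ giving $\alpha\alpha^* t_0$ and $1\to 2\to 3\to 1$ giving $\alpha\beta_1\gamma^*$; $u=xy^2x$ yields four paths producing $s_0\gamma\gamma^* s_0$, $\alpha\delta_1\gamma^* s_0$, $s_0\gamma\delta_1^*\alpha^*$, and $\alpha(\delta_0^2+\delta_1\delta_1^*)\alpha^*$; and so on for each of the seventeen monomials. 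Multiplying each by its coefficient $p_u$ and summing gives the first displayed expression; grouping terms by the outermost $\alpha$, $\gamma$ factors and by the "decorations" $s_0, t_0$ on the left and right yields the second form. The only obstacle here is purely organizational bookkeeping across the seventeen words in $\listL$; there is no subtle step, and in particular the identification of the "middle matrix" structure anticipated in Subsection \ref{sec:xyHess} falls out automatically once the enumeration is complete.
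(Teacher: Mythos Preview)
Your approach is correct and is essentially the same as the paper's: the paper simply declares the proof ``routine'' and (in an appendix) verifies $\Hxy{w}$ word-by-word for $w\in\listL$ by computing the $(1,1)$-block of $w(s,t)$ and subtracting $w(s_0,t_0)$, then invokes linearity and the $x\leftrightarrow y$ symmetry. Your path-counting language is just a tidy way of organizing the same block expansion, and your sparsity argument ruling out quartic $(\alpha,\gamma)$-contributions is a small but welcome addition, since it explicitly justifies the identification $\Hxy{p}=V^*p(s,t)V-p(s_0,t_0)$ for $p\in\PL$ that the paper states without proof.
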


\begin{lemma}
\label{l:sameHxys}
 If $p\in \PL$ and $\Hxy{p}=0,$ then $p$ is an $xy$-pencil. 
 If $p,q\in \PL$ satisfy $\Hxy{p}=\Hxy{q},$ then there is an $xy$-pencil $\afflin\in \C\axy$ such 
 that $p=q+\afflin.$
\end{lemma}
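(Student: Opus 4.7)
The plan is to read both statements directly off the explicit expansion of $\Hxy{p}$ given in Lemma~\ref{l:xyHessian}.

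First, note that the map $p \mapsto \Hxy{p}$ is $\C$-linear, since both $p(s,t) \mapsto V^* p(s,t) V$ and $p \mapsto p(s_0, t_0)$ are linear in $p$. Hence the second statement reduces immediately to the first applied to $p - q \in \PL$: if $\Hxy{p} = \Hxy{q}$, then $\Hxy{p-q} = 0$, and the first statement yields an $xy$-pencil $\afflin$ with $p - q = \afflin$.

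For the first statement, recall that in the two symmetric variables $x, y$ an $xy$-pencil is precisely a $\C$-linear combination of $\{1, x, y, xy, yx\}$. Writing $p = \sum_{u \in \listL} p_u\, u$, I must show that $\Hxy{p} = 0$ forces $p_u = 0$ for every $u \in \listL \setminus \{1, x, y, xy, yx\}$. (The five exceptional monomials are genuinely unconstrained, since $V^* s t V = s_0 t_0 = V^* s V \cdot V^* t V$ and similarly for $ts$, so $xy$-pencils contribute zero to $\Hxy$.) The auxiliary variables $\{s_0, t_0, \alpha, \gamma, \beta_1, \beta_2, \delta_0, \delta_1\}$ and their adjoints are freely noncommuting, so distinct monomials in them are linearly independent in the free algebra, and I may extract scalar coefficients one monomial at a time.

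The strategy is to locate, for each of the twelve monomials $u \in \listL \setminus \{1, x, y, xy, yx\}$, a distinguished monomial in the Hessian variables that appears in the formula of Lemma~\ref{l:xyHessian} with coefficient a nonzero scalar multiple of $p_u$ and receives no contribution from any other $p_v$. For example, $p_{x^2}$ is isolated by $\alpha \alpha^*$, $p_{y^2}$ by $\gamma \gamma^*$, $p_{xy^2x}$ by $\alpha \delta_1 \delta_1^* \alpha^*$, $p_{yx^2y}$ by $t_0 \alpha \alpha^* t_0$, $p_{xy^2}$ by $\alpha\alpha^* t_0$, $p_{yx^2}$ by $t_0 \alpha \alpha^*$, $p_{xyx}$ by $\alpha \delta_0 \alpha^*$, $p_{yxy}$ by $\gamma \beta_2 \gamma^*$, $p_{xyxy}$ by $\alpha \delta_0 \alpha^* t_0$, $p_{yxyx}$ by $t_0 \alpha \delta_0 \alpha^*$, $p_{x^2y}$ by $\alpha \beta_1 \gamma^*$, and $p_{y^2x}$ by $\gamma \delta_1^* \alpha^*$. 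Equating the coefficient of each such distinguished monomial on both sides of $\Hxy{p} = 0$ yields $p_u = 0$.

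The main obstacle is bookkeeping: one must verify for each of the twelve non-pencil monomials that the chosen distinguished word in the Hessian variables really receives a contribution from exactly one $p_u$, and that all contributions collected from all distinguished words cover every non-pencil monomial in $\listL$. Lemma~\ref{l:xyHessian} is written in a form in which each summand is already grouped by the pattern of $\alpha, \gamma, t_0, s_0$ surrounding a middle word in $\beta$'s and $\delta$'s, and within each group the middle words are themselves distinct free monomials, so this verification is straightforward. Once completed, linear independence in the free algebra forces $p \in \Span_\C\{1, x, y, xy, yx\}$, completing the proof.
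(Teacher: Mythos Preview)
Your approach is exactly the paper's: use linearity of $\Hxy$ to reduce the second assertion to the first, then read $p_w=0$ for each non-pencil $w\in\listL$ directly off the explicit formula in Lemma~\ref{l:xyHessian}. The paper simply asserts this without listing distinguished monomials; your list makes the verification concrete.

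One small bookkeeping slip: the monomial $\alpha\alpha^* t_0$ carries coefficient $p_{x^2y}$, not $p_{xy^2}$ (compute $\Hxy\{x^2y\}=\alpha\alpha^* t_0+\alpha\beta_1\gamma^*$ versus $\Hxy\{xy^2\}=s_0\gamma\gamma^*+\alpha\delta_1\gamma^*$). As written you therefore isolate $p_{x^2y}$ twice and never isolate $p_{xy^2}$. The fix is trivial: take $s_0\gamma\gamma^*$ (or $\alpha\delta_1\gamma^*$) as the distinguished word for $p_{xy^2}$.
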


\begin{proof}
 Since $\Hxy$ is  a linear mapping,
 it suffices to show, if $p=\sum_{w\in \listL} p_w w$
 satisfies  $\Hxy p=0$, then  $p$ is an $xy$-pencil.
 To this end, observe, if  $\Hxy{p}=0$, then, in view of
 Lemma \ref{l:xyHessian}, $p_w=0$ for  $w$ in the set
\[
  \{x^2,y^2,xyx,yxy,xy^2,y^2x,x^2y,yx^2,xy^2x,xyxy,yxyx,yx^2y\}.
\]
Hence the only possible nonzero coefficients of $p$ are $p_{1},p_x,p_y,p_{xy},p_{yx}$
and the  result follows.
\end{proof}

The Hessian of a $p\in \PL$  has a border vector-middle matrix representation that we
now describe. Since $p\in \PL,$
\[
p(x,y) = \afflin(x,y) + \sum_{w\in \listLp} p_w w,
\]
where $\afflin(x,y)$ is an $xy$-pencil and
\[
\listLp =\{x^2,y^2,xyx,yxy,xy^2,y^2x,x^2y,yx^2,xy^2x,xyxy,yxyx,yx^2y\}
 = \listL \setminus\{1,x,y,xy,yx\}.
\]
Since $p$ is symmetric, there are relations among its coefficients. For instance,
$p_{xyx},p_{yxy}\in\mathbb R$ and $p_{yx^2} =\overline{p_{x^2y}}$.

Let $\Bxy=\Bxy(s_0,t_0,\alpha,\gamma)$ denote the row vector-valued free polynomial, 
\[
 \Bxy(s_0,t_0,\alpha,\gamma) = \begin{pmatrix} \alpha &  t_0 \alpha   & 
   \gamma &  s_0 \gamma  \end{pmatrix}.
\]
We call \df{$\Bxy$} the  \df{$xy$-border vector}, or simply the border vector.

For $1\le j,k\le 2$,  let $\Mxy_{j,k}(\beta_1,\beta_2,\delta_0,\delta_1)$ denote the $2\times 2$
matrix polynomial,
\[
\begin{split}
 \Mxy_{11} &=  \begin{pmatrix} p_{x^2} + p_{xyx}\delta_0 + p_{xy^2x} (\delta_0^2 +\delta_1\delta_1^*)&
     p_{x^2y}+p_{xyxy}\delta_0 \\  p_{yx^2}  + p_{yxyx} \delta_0 & p_{yx^2y}\end{pmatrix},\\
 \Mxy_{12} &= \begin{pmatrix} p_{x^2y}\beta_1 +p_{xy^2} \delta_1 + p_{xyxy} (\delta_0\beta_1 + \delta_1\beta_2) &
   p_{xy^2x}\delta_1 \\ p_{yx^2y} \beta_1 & 0 \end{pmatrix}, \\
\Mxy_{21} &= \begin{pmatrix} p_{yx^2} \beta_1^* + p_{y^2x}\delta_1^* +p_{yxyx}(\beta_1^*\delta_0+\beta_2 \delta_1^*)
          & p_{yx^2y}\beta_1^*\\ p_{xy^2x}\delta_1^* & 0 \end{pmatrix}, \\
\Mxy_{22} &=\begin{pmatrix} p_{y^2}  + p_{yxy}\beta_2 + p_{yx^2y}(\beta_2^2 + \beta_1^* \beta_1)
  & p_{y^2x} +p_{yxyx}\beta_2 \\ p_{xy^2} + p_{xyxy} \beta_2 & p_{xy^2x} \end{pmatrix}.
\end{split}
\]
Let $\Mxy=(\Mxy_{j,k})_{j,k=1}^2$ denote the resulting $4\times 4$ ($2\times 2$ block matrix
with $2\times 2$ entries) matrix polynomial. 
The matrix \df{$\Mxy$} is the \df{$xy$-middle matrix}, or simply the
middle matrix, of  $p$.

\begin{lemma}
\label{l:xyBMB}
 If $p\in \PL$ is symmetric, then
\[
 \Hxy{p} = \Bxy \Mxy \Bxy^*.
\]
\end{lemma}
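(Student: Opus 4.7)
The proof is a direct verification, so my plan is to carefully organize the computation so the bookkeeping is transparent rather than to invoke any deeper structural result.

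First, I would split the border vector according to its block structure, writing
\[
\Bxy = \begin{pmatrix} B_1 & B_2 \end{pmatrix}, \quad
B_1 = \begin{pmatrix} \alpha & t_0\alpha \end{pmatrix}, \quad
B_2 = \begin{pmatrix} \gamma & s_0\gamma \end{pmatrix},
\]
so that
\[
\Bxy \Mxy \Bxy^* \;=\; \sum_{j,k=1}^{2} B_j\, \Mxy_{jk}\, B_k^*.
\]
Each of the four summands is a $1\times 1$ matrix polynomial, and I would expand it entrywise. Next, I would group the twelve degree-quadratic (in $\alpha,\gamma$) terms appearing in $\Hxy p$ from Lemma \ref{l:xyHessian} by the pair (left factor, right factor) drawn from $\{\alpha,\, t_0\alpha,\, \gamma,\, s_0\gamma\}$ and their adjoints; each term falls into exactly one such box.

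I would then verify the match box-by-box. The diagonal block $B_1\Mxy_{11}B_1^*$ should collect precisely the terms of type $\alpha(\cdots)\alpha^*$, $\alpha(\cdots)\alpha^* t_0$, $t_0\alpha(\cdots)\alpha^*$, and $t_0\alpha(\cdots)\alpha^* t_0$ in $\Hxy p$; inspection shows these are exactly the four entries of the matrix $\Mxy_{11}$ sandwiched the right way. The symmetric diagonal block $B_2\Mxy_{22}B_2^*$ handles the analogous $\gamma$-side terms. The off-diagonal blocks $B_1\Mxy_{12}B_2^*$ and $B_2\Mxy_{21}B_1^*$ capture the cross terms $\alpha(\cdots)\gamma^*,\ \alpha(\cdots)\gamma^* s_0,\ t_0\alpha(\cdots)\gamma^*$, and their adjoints. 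I would use symmetry of $p$ (so that $\Mxy_{21}=\Mxy_{12}^*$) to see that $B_2\Mxy_{21}B_1^*$ is indeed the adjoint of $B_1\Mxy_{12}B_2^*$ and that the combined contribution is self-adjoint, as required.

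I expect no real obstacle; the whole matter is organizational. The one point that demands care is the coefficient bookkeeping around the $xyxy$, $yxyx$, $xy^2x$, and $yx^2y$ terms, where a single monomial of $p$ contributes to several distinct boxes in the table above (for instance, $p_{xy^2x}$ appears in both $\Mxy_{11}$ and in $\Mxy_{22}$, and $p_{xyxy}$ contributes both to the $(1,2)$ entry of $\Mxy_{11}$ and to the $(1,2)$ block via $\delta_0\beta_1+\delta_1\beta_2$). Because the entries of $\Mxy$ were designed precisely by reading off these coefficients from Lemma \ref{l:xyHessian}, the verification is guaranteed to succeed; the lemma is really a record of that design.
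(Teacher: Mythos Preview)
Your proposal is correct and matches the paper's (implicit) approach: the paper does not write out a proof of this lemma, treating it as immediate from the second displayed form of $\Hxy{p}$ in Lemma~\ref{l:xyHessian}, which is already organized term-by-term according to the left and right border factors $\alpha$, $t_0\alpha$, $\gamma$, $s_0\gamma$. Your block-by-block expansion is exactly that verification made explicit, and your closing remark that the entries of $\Mxy$ were designed by reading off coefficients from Lemma~\ref{l:xyHessian} is precisely the point.
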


 Proposition~\ref{p:xyconvex-Mpos} shows 
$xy$-convexity of $p$ is equivalent to positivity of its middle matrix.

\begin{proposition}
\label{p:xyconvex-Mpos}
 If $p(x,y)$ is $xy$-convex, then $\Mxy(B_1,B_2,D_0,D_1)\succeq 0$
 for all matrices $(B_1,B_2,D_0,D_1)$ of compatible sizes. 
\end{proposition}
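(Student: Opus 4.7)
The plan is to translate the hypothesis of $xy$-convexity into a positivity statement for the $xy$-Hessian and then extract positivity of the middle matrix via the classical ``border vector surjectivity'' trick.

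First I would record that, by Lemma~\ref{l:xyconveximpliesbiconvex}, $p\in\PL$, so the identity $\Hxy{p}=V^*p(s,t)V-p(s_0,t_0)$ holds with no ``extra'' higher-order terms dropped. Combined with Proposition~\ref{p:xy-convexp-alt}, $xy$-convexity is equivalent to the assertion that, for every tuple of matrices $(S_0,T_0,A,\Gamma,B_1,B_2,D_0,D_1)$ of compatible sizes (with $S_0,T_0,B_2,D_0$ hermitian), we have $\Hxy{p}(S_0,T_0,A,\Gamma,B_1,B_2,D_0,D_1)\succeq 0$. Invoking the factorization $\Hxy{p}=\Bxy\,\Mxy\,\Bxy^*$ of Lemma~\ref{l:xyBMB}, this becomes
\[
\Bxy(S_0,T_0,A,\Gamma)\,\Mxy(B_1,B_2,D_0,D_1)\,\Bxy(S_0,T_0,A,\Gamma)^*\succeq 0
\]
for all such tuples.

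Next I would fix $B_1,B_2,D_0,D_1$ of sizes $l\times m,\;m\times m,\;l\times l,\;l\times m$ and show that $\Mxy(B_1,B_2,D_0,D_1)\succeq 0$ on $\C^{2l+2m}$. Given an arbitrary vector $v\in\C^{2l+2m}$, split it as $v=(v_1,v_2,v_3,v_4)$ with $v_1,v_2\in\C^l$ and $v_3,v_4\in\C^m$. Take $k=4$ with standard basis $\{e_1,e_2,e_3,e_4\}$ and set $T_0=e_1e_2^*+e_2e_1^*$ and $S_0=e_1e_3^*+e_3e_1^*$ (both hermitian). Define $A\in M_{4,l}(\C)$ to have rows $v_1^*,v_2^*,0,0$ and $\Gamma\in M_{4,m}(\C)$ to have rows $v_3^*,0,v_4^*,0$. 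Then direct verification gives $A^*e_1=v_1$, $A^*T_0e_1=A^*e_2=v_2$, $\Gamma^*e_1=v_3$, and $\Gamma^*S_0e_1=\Gamma^*e_3=v_4$, i.e.
\[
\Bxy(S_0,T_0,A,\Gamma)^*e_1=\begin{pmatrix}A^*e_1\\A^*T_0e_1\\\Gamma^*e_1\\\Gamma^*S_0e_1\end{pmatrix}=v.
\]
Consequently
\[
v^*\,\Mxy(B_1,B_2,D_0,D_1)\,v=e_1^*\,\Bxy\,\Mxy\,\Bxy^*\,e_1\;\ge\;0,
\]
and since $v$ was arbitrary, $\Mxy(B_1,B_2,D_0,D_1)\succeq 0$.

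The main thing requiring care is not the analysis but simply making sure the border vector $\Bxy$ can be made ``surjective'' on the fiber of interest; everything else is bookkeeping on sizes. Choosing $k=4$ together with the particular hermitian $S_0,T_0$ above is the cleanest way to ensure the four blocks $A,\,T_0A,\,\Gamma,\,S_0\Gamma$ of $\Bxy$ can be made independent enough to realize an arbitrary target vector as $\Bxy^*e_1$, which is exactly what converts positivity of $\Bxy\Mxy\Bxy^*$ on the space $\C^k$ into positivity of $\Mxy$ on its full domain.
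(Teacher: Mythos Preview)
Your proof is correct and follows essentially the same approach as the paper's: both argue that $xy$-convexity forces $\Bxy\Mxy\Bxy^*\succeq 0$, and then use surjectivity of the border vector to conclude $\Mxy\succeq 0$. The only cosmetic difference is that the paper works with $k=2$ (choosing $h\in\C^2$ and $X_0,Y_0\in M_2(\C)$ with $\{h,X_0h\}$, $\{h,Y_0h\}$ linearly independent), whereas you take $k=4$ with explicit permutation-type $S_0,T_0$; your choice is slightly less economical but more concrete, and the underlying idea is identical.
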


\begin{proof}
  Since $p$ is $xy$-convex, $\Hxy p\succeq 0.$ 
  Let positive integers $M,N$ and matrices  $D_0\in M_M(\C)$, $B_2\in M_N(\C)$ 
  and $B_1,D_1\in M_{N,M}(\C)$ be given. 
  Choose a vector $h\in\mathbb C^2$ and $X_0,Y_0\in \bS_2$ 
  such that $\{h,X_0h\}$ and $\{h,Y_0h\}$ are linearly independent.
  Positivity of the Hessian gives
\[
\begin{split}
0&\le  h^* \Hxy p(X_0,A,B_1,B_2,Y_0,C,D_0,D_1)h \\
  &  = [h^*\Bxy(X_0,A,Y_0,C)] \, \Mxy(B_1,B_2,D_0,D_1) \, [h^*\Bxy(X_0,A,Y_0,C)]^*.
\end{split}
\] 
  On the other hand, given vectors $f_1,\dots,f_4\in \C^M$, there exists 
 $A\in M_{2, M}(\C)$ and $C\in M_{2, N}(\C)$ 
 such that 
\[
 \Bxy(X_0,Y_0,A,C)^* h =   \begin{pmatrix} A^* h \\ A^* Y_0h \\ C^* h \\ C^* X_0 h\end{pmatrix}
   =\begin{pmatrix} f_1\\ f_2\\ f_3\\ f_4\end{pmatrix}. 
\]
It follows that $\Mxy(B_1,B_2,D_0,D_1) \succeq 0.$ 
\end{proof}

\subsection{Proof of Theorem \ref{t:introxyconvexp}}
\label{sec:proofxyconvexp}
The convexity assumption on $p$ implies the middle matrix $\Mxy$ of its Hessian 
takes positive semidefinite values by Proposition \ref{p:xyconvex-Mpos}. 

Let 
\[
\sigma=\left ( \begin{pmatrix} \delta_0 & \delta_1 \\ \delta_1^* & \delta_2 \end{pmatrix},
 \begin{pmatrix} \beta_0 & \beta_1 \\ \beta_1^* & \beta_2 \end{pmatrix} \right ).
\]
Let $Q$ denote the $2\times 2$ matrix polynomial obtained from
 the first and third rows and columns of $\Mxy$. Thus,
\begin{equation}
\label{def:Q}
 Q = Q(\delta_{a,b},\beta_{a,b})
\begin{pmatrix}
p_{x^2} + p_{xyx}\delta_0 + p_{xy^2x} (\delta_0^2 +\delta_1\delta_1^*) &
p_{x^2y}\beta_1 +p_{xy^2} \delta_1 + p_{xyxy} (\delta_0\beta_1 + \delta_1\beta_2) \\
p_{yx^2} \beta_1^* + p_{y^2x}\delta_1^* +p_{yxyx}(\beta_1^*\delta_0+\beta_2 \delta_1^*) &
p_{y^2}  + p_{yxy}\beta_2 + p_{yx^2y}(\beta_2^2 + \beta_1^* \beta_1)
\end{pmatrix},
\end{equation}
 and, given $S=(S_1,S_2)$ of the block form of equation~\eqref{eq:S2x2}, we have
 $Q(S)\succeq 0$ since $\Mxy(S_{2,1},S_{2,2},S_{1,0},S_{1,1})\succeq 0$
by Proposition~\ref{p:xyconvex-Mpos}.

 Define a $2\times 2$ polynomial $P(x_1,x_2) = \sum P_{j,k} x_j x_k$ (with $x_0=1$ as usual)
 by setting
\begin{equation}
\label{def:P}
\begin{split}
 P_{0,0} &=  \begin{pmatrix} p_{x^2}& 0\\ 0 &p_{y^2} \end{pmatrix}, \ \ 
 P_{0,1} = P_{1,0} =\frac 12 \begin{pmatrix} p_{xyx} & p_{xy^2} \\ p_{y^2x} & 0\end{pmatrix}, \ \
 P_{0,2}=P_{2,0} = \frac12 \begin{pmatrix} 0 & p_{x^2y} \\ 
       p_{yx^2} & p_{yxy}\end{pmatrix}\\
 P_{1,2} &= \begin{pmatrix} 0 & p_{xyxy} \\ 0 & 0 \end{pmatrix}, \ \
 P_{2,1}= \begin{pmatrix} 0& 0 \\ p_{yxyx} & 0 \end{pmatrix}, \ \
 P_{1,1} =  \begin{pmatrix} p_{xy^2x}&0\\0&0\end{pmatrix}, \ \ 
 P_{2,2} =  \begin{pmatrix} 0&0\\0& p_{yx^2y} \end{pmatrix}
\end{split}
\end{equation}
and observe   $\sE{P}(\sigma) = Q(\sigma).$ Thus $\sE{P}(S)\succeq 0$ for all 
tuples of hermitian matrices 
of the form \eqref{eq:S2x2}.
Hence Theorem \ref{l:FstarFisM} produces an $N$ and 
$F= \sum F_j s_j,$ where $F_j \in M_{N, 2}(\C),$
and an $R= \begin{pmatrix} 0 & r \\ r^* & 0\end{pmatrix}$  such that $F^*F +R= P,$
where $r\in \C$. In particular,
\[
\begin{split}
  F_j^* F_k &= P_{j,k}, \, 1\le j,k \le 2 \\
  F_0^*F_k + F_k^* F_0 &= P_{k,0}+P_{0,k}, \, k=1,2 \\
  F_0^* F_0 &= P_{0,0} +R,\\
  F_1^*F_1 = P_{1,1} =\begin{pmatrix} p_{xy^2x} &  0 \\ 0 & 0\end{pmatrix} &, \phantom{=} 
  F_2^*F_2 = P_{2,2} = \begin{pmatrix} 0&0\\0& p_{yx^2y} \end{pmatrix}.
  \end{split}
\]
Hence, letting  $\{\Ep_1,\Ep_2\}$ denote the standard orthonormal basis for $\C^2,$
$F_1e_2=0=F_2e_1$.  In particular, $e_1^* F_2^*F_0=0.$ Now 
 set $\Lambda_x = F_0e_1,$ $\Lambda_y=F_0e_2,$  $\Lambda_{yx} = F_1e_1$ and
$\Lambda_{xy} = F_2 e_2$ and verify,
\begin{align}\label{e:HxyqHxyp}
  \Lambda_x^* \Lambda_x &= e_1^*F_0^* F_0 e_1 = e_1^* P_{0,0} e_1 = p_{x^2} \notag \\
  \Lambda_y^* \Lambda_y &= e_2^*F_0^* F_0 e_2 = e_2^* P_{0,0} e_2 = p_{y^2} \notag  \\
  \Lambda_{yx}^* \Lambda_x +\Lambda_x^* \Lambda_{yx} 
   &= e_1^*F_1^*F_0e_1 + e_1^* F_0^*F_1 e_1
    = e_1^* (F_1^*F_0+F_0^*F_1)e_1 = (2P_{1,0})_{1,1} = p_{xyx} \notag \\
  \Lambda_{xy}^*\Lambda_y + \Lambda_y^*\Lambda_{xy} 
   &= e_2^*F_2^*F_0e_2 + e_2^*F_0^*F_2e_2 = e_2^*(F_2^*F_0 + F_0^*F_2)e_2
    = e_2^*(2P_{2,0})e_2 = p_{yxy}\notag \\
  \Lambda_x^*\Lambda_{xy} &= e_1^*F_0^* F_2 e_2 = e_1^* (F_0^*F_2 +F_2^*F_0)e_2
    = e_1^*(2P_{2,0})e_2 = p_{x^2 y}\notag \\
    \Lambda_y^*\Lambda_{yx} &= e_2^*F_0^*F_1e_1 = e_2^*(F_0^*F_1 +F_1^*F_0)e_1
     = e_2^*(2P_{1,0})e_1 =  p_{y^2x} \\
    \Lambda_{xy}^*\Lambda_{x} &= e_2^*F_2^*F_0e_1 = e_2^*(F_2^*F_0+F_0^*F_2)e_1
    =  e_2^*(2P_{2,0})e_1 =p_{yx^2} \notag \\
    \Lambda_{yx}^*\Lambda_{y} &=  e_1^*F_1^*F_0e_2 = e_1^*(F_1^*F_0+F_0F_1^*)e_2 
     = e_1^*(2P_{1,0})e_2 =p_{xy^2} \notag \\
  \Lambda_{yx}^*\Lambda_{yx} &= e_1^*F_1^*F_1e_1 = e_1^* P_{1,1} e_1 = p_{xy^2x}\notag \\
  \Lambda_{xy}^*\Lambda_{xy} &= e_2^*F_2^*F_2e_2 = e_2^* P_{2,2}e_2 =  p_{yx^2y}\notag  \\
  \Lambda_{xy}^*\Lambda_{yx} &= e_2^*F_2^*F_1e_1 = e_2^* P_{2,1} e_1 =p_{yxyx} \notag \\
  \Lambda_{yx}^* \Lambda_{xy} &= e_1^*F_1^* F_2 e_2 = e_1^* P_{1,2}e_2 = p_{xyxy}. \notag 
\end{align}

Let 
\[
 q=\Lambda(x,y,xy)^*\Lambda(x,y,xy),
\]
where $\Lambda$ denotes the $xy$-pencil 
\[
 \Lambda = \Lambda_{x}x+\Lambda_y y +\Lambda_{xy} xy +\Lambda_{yx} yx.
\]
A straightforward calculation, based on the identities of
equation \eqref{e:HxyqHxyp} and an appeal to the formula for the 
$xy$-Hessian in Lemma \ref{l:xyHessian}, shows $\Hxy q = \Hxy p$. Hence, by Lemma \ref{l:sameHxys},
there is a hermitian $xy$-pencil $\afflin$ 
such that $p=q+\afflin =\Lambda^*\Lambda + \afflin,$ completing the proof.
\qed

\begin{remark}\rm
 Note that  $\Lambda_x^* \Lambda_y +\Lambda_y^* \Lambda_x =R=\begin{pmatrix} 0 &r\\r^* & 0\end{pmatrix}.$
\end{remark}

\newpage

\appendix

\section{Not for publication}

\subsection{Proof of Proposition~\ref{p:a2convex}}
\label{s:a2convex}
 First suppose $r\in \fftwo$ is $a^2$-convex on 
 $\cD\subset \bS^{\tg}\times \bS^{\gv}.$ To prove $r$ is convex in $x$ 
 on $\cD,$ suppose $(A,X),(A,Y).$ 
  Consider the matrices 
\[ 
    B=\begin{pmatrix} A  & 0 \\ 0 & A \end{pmatrix}, \ \ 
    Z=\begin{pmatrix} X  & 0 \\ 0 & Y \end{pmatrix}, \ \ 
  V=\frac{1}{\sqrt{2}}\begin{pmatrix} I  \\ I \end{pmatrix}.
\] 
 Note that $V$ reduces $B.$  Equivalently  $V^* B^2V=(V^*BV)^2.$
 Since  $V^*(B,Z)V=(A,\frac{X+Y}{2})\in \cD$
 (by the convexity hypothesis on $\cD$) 
  and of course $(B,Z)\in \cD$ too,
\[
\frac{1}{2}(r(A,X)+r(A,Y))  =V^*r(B,Z)V \succeq  r(V^*(B,Z)V) = r\Big(A,\frac{X+Y}{2}\Big).
\]
Hence $r$ is convex in $x$ on $\cD.$

 Now suppose $r$ is convex in $x$ on $\cD$ and $(B,Z)\in \cD_n$,
 and  $V:\C^m\to \C^n$ is an isometry such that $V^*B^2V=(V^*BV)^2.$
 Thus the range of $V$ reduces $B$ and 
 up to unitary equivalence,
\[
  B= \begin{pmatrix} A& 0 \\ 0 & \alpha\end{pmatrix}, \ \
  Z=\begin{pmatrix} X & \beta \\ \beta^* & \delta \end{pmatrix}, \ \
  V=\begin{pmatrix} I \\ 0 \end{pmatrix}.
\]
 Let 
\[
 U =\begin{pmatrix} I & 0\\0 & -I \end{pmatrix}.
\]
Since $U$ is unitary, $\cD$ is a free set and  $(B,Z)\in \cD,$
 we have $U^*(B,Z)U\in \cD.$ 
Since $\cD$ is, by hypothesis, convex in $x,$
\[
\cD \ni \frac{(B,Z)+U^*(B,Z)U}{2} 
 = \left ( B, \begin{pmatrix} X&0\\0&\delta \end{pmatrix}\right ).
\]
Because $r$ is convex in $x,$
\[
 \begin{split}
\begin{pmatrix} r(V^*(B,Z)V) & 0 \\ 0 & r(\alpha,\beta) \end{pmatrix}
 & =  \begin{pmatrix} r(A,X) & 0 \\ 0 & r(\alpha,\beta)\end{pmatrix} 
 = r\left ( B, \begin{pmatrix} X&0\\0&\delta \end{pmatrix}\right )
\\ & \preceq \frac12 V^* \left (r(B,Z)+r(U^*(B,Z)U\right)  V
\\ &  = \frac12 V^* \left ( r(B,Z)+U^* r(B,Z)U\right ) V
 = V^* r(B,Z)V. 
\end{split}
\]
Thus $r$ is $a^2$-convex.\hfill\qedsymbol

\subsection{Proof of Lemma~\ref{l:xyHessian}}
 We provide the routine verification of the formula for $\Hxy{w}$ for  words $w\in\listL.$
 The result then follows  by linearity of $\Hxy.$

 It is clear that the $xy$-pencil terms ($1,x,y,xy$ and $yx$) vanish under $\Hxy.$
 The $(1,1)$ entry of $t^2$ is $t_0^2+\gamma\gamma^*.$ Thus, 
 \[
  H^{xy}{y^2} = t_0^2+\gamma\gamma^*-t_0^2 = \gamma\gamma^*.
 \]
 Similarly, the $(1,1)$ entry of $st^2$ is $s_0t_0^2 + s_0\gamma\gamma^*+\alpha\delta_1\gamma^*.$ 
 Hence,
 \[
 \Hxy{xy^2} = (s_0t_0^2 + s_0\gamma\gamma^*+\alpha\delta_1\gamma^*)- s_0t_0^2 =
   s_0\gamma\gamma^*+\alpha\delta_1\gamma^*.
 \]
  The $(1,1)$ entry of $sts$ is $(s_0t_0s_0+\alpha\delta_0\alpha^*) - s_0t_0s_0.$
 Hence,
\[
 \Hxy{xyx} = \alpha\delta_0\alpha^*.
\]
 The $(1,1)$ entry of $stst$ is 
$
  (s_0t_0s_0+\alpha \delta_0 \alpha^*)t_0 + (\alpha\delta_0\beta_1 
    +(s_0\gamma +\alpha \delta_1)\beta_2 )\gamma^*  - s_0t_0s_0t_0.$
 Hence
\[
 \Hxy{xyxy} = \alpha \delta_0 \alpha^* t_0 + \alpha (\delta_0 \beta_1
     + \delta_1\beta_2)  \gamma^*    +  s_0\gamma\beta_2 \gamma^*.
\]
 The $(1,1)$ entry of $st^2s$ is 
$
  s_0t_0^2s_0 + s_0\gamma\gamma^*s_0+\alpha\delta_1\gamma^*s_0 + s_0 \gamma \delta_1^* \alpha^* 
   + \alpha (\delta_0^2+\delta_1\delta_1^*) \alpha^*.$ 
 Thus, 
 \[
\begin{split}
 \Hxy{xy^2x} &= (s_0t_0^2s_0 + s_0\gamma\gamma^*s_0+\alpha\delta_1\gamma^*s_0 
     + s_0 \gamma \delta_1^* \alpha^* + \alpha (\delta_0^2+\delta_1\delta_1^*) \alpha^*)-s_0t_0^2s_0
\\  &=  s_0\gamma\gamma^*s_0+\alpha\delta_1\gamma^*s_0 + s_0 \gamma \delta_1^* \alpha^* 
    + \alpha (\delta_0^2+\delta_1\delta_1^*) \alpha^*.
\end{split}
  \] 
The remainder follow by symmetry in $x$ and $y$.\hfill\qedsymbol

\subsection{Examples}
\begin{example}\rm
\label{ex:square}
 Consider the polynomial
\[
\begin{split}
 p(x,y) &= x^2 +y^2 + xy^2x +2(xyxy+yxyx)+yx^2 y\\
 &=  \begin{pmatrix} 1 & y \end{pmatrix} \begin{pmatrix} x &0\\0 & x\end{pmatrix}
  \begin{pmatrix} 1+y^2  & 2y \\ 2y & 1 \end{pmatrix} \begin{pmatrix} x &0\\0 & x\end{pmatrix} 
 \begin{pmatrix} 1 \\ y \end{pmatrix}  + y^2\\
 &= \begin{pmatrix} 1 & x \end{pmatrix} \begin{pmatrix} y &0\\0 & y\end{pmatrix}
  \begin{pmatrix} 1+x^2  & 2x \\ 2x & 1 \end{pmatrix} \begin{pmatrix} y &0\\0 & y\end{pmatrix}
 \begin{pmatrix} 1 \\ x \end{pmatrix}+ x^2. 
\end{split}
\]
It is, by its very form, convex in $x$ on the free set $\{(X,Y): I-3 Y^2 \succeq 0\}$
and it is convex in $y$ on the set $\{(X,Y): I-3 X^2\succeq 0\}$. Thus $p$ is 
biconvex on $\mathscr D =\{(X,Y): I-3X^2,\, I-3Y^2 \succeq 0\}.$
That the set $\mathscr{E} =\{(X,Y):I-3X^2,\, I-3Y^2 \succ 0\}$ is
 the largest open free set on which $p$ is biconvex follows from
 Theorem \ref{t:wurzelification}.
\qed
\end{example}

\begin{example}\rm
\label{ex:returntosquare}
Consider the polynomial $p$ from Example \ref{ex:square}
and recall  $\mathscr D =\{(X,Y): I-3X^2,\, I-3Y^2 \succeq 0\}$
contains any free set on which $p$ is biconvex.  The middle matrix
 of the  $xy$-Hessian of $p$ is given by
\[
 \Mxy(x,y) = \begin{pmatrix} 
  \begin{pmatrix} I + \delta_0^2 +\delta_1\delta_1^* & 2\delta_0  \\ 2 \delta_0 & I \end{pmatrix}
  & \begin{pmatrix} 2(\delta_0 \beta_1 +\delta_1 \beta_2)  & \delta_1 \\
         \beta_1 & 0 \end{pmatrix} \\
   \begin{pmatrix} 2(\beta_1^*\delta_0 +\beta_2\delta_1^*) & \beta_1^* \\
                     \delta_1^* & 0 \end{pmatrix} 
  & \begin{pmatrix} I+\beta_2^2+\beta_1^*\beta_1 & 2\beta_2 \\ 2\beta_2 & I \end{pmatrix}
 \end{pmatrix}.
\]
Evidently $\Mxy\succeq 0$ in a neighborhood of $0$ and thus $p$ is $xy$-convex
in a neighborhood of $0$. On the other hand, $\Mxy$ is not positive semidefinite
on all of $\mathscr D$ and thus, arguing as in the proof of Theorem \ref{t:introxyconvexp},
$p$ is not $xy$-convex on the interior of $\mathscr{D}$.
\qed
\end{example}

\subsection{Equivalence of positivity of $\Mxy$ and $\sE{P}$}
\label{sec:curious}
 In this subsection we show directly  that positivity of $\Mxy$ 
 is equivalent to positivity of $\sE{P}=Q,$ where $P$ and $Q$ are
 defined in equations \eqref{def:P} and \eqref{def:Q}.
  Of course that positivity
 of $\Mxy$ implies positivity of $Q$ is immediate.  On the other
 hand, the proof of  Theorem \ref{t:introxyconvexp} only required
 the, a priori weaker, condition $\sE{P}\succeq 0$ and hence
 this latter condition implies positivity of the middle matrix
 $\Mxy.$ 

Assume 
 $Q$ takes positive semidefinite  values. Given  $\delta_1,\beta_1\in M_{n, m}(\C)$,
 $\delta_0\in M_n(\C)$ and $\beta_2\in M_m(\C)$, 
make the replacements,
\[
 \hatdelta_0 = \begin{pmatrix} \delta_0 & 0_{n\times m} \\ 
          0_{m\times n} &0_{m\times m}\end{pmatrix}, \ \
 \hatdelta_1 =\begin{pmatrix} \delta_1 & 0_{n\times n} \\
            tI_m  & 0_{m\times n} \end{pmatrix}, \ \
 \hatbeta_1 =\begin{pmatrix} \beta_1 & tI_n \\ 0_{m\times m} & 0_{m\times n} \end{pmatrix}, \ \
 \hatbeta_2 =\begin{pmatrix} \beta_2 &0_{m\times n}\\
  0_{n\times m}&0_{m\times m}\end{pmatrix}.
\]
 Substituing into $Q$ gives,
$
 Q(\hatbeta,\hatdelta) = \begin{pmatrix} Q_{j,k} \end{pmatrix}_{j,k=1}^2,$
 where
\[
\begin{split}
Q_{1,1}  &=
 \begin{pmatrix} p_{x^2} + p_{xyx} \delta_0 + p_{xy^2x} (\delta_0^2 +\delta_1\delta_1^*) &
   t p_{xy^2x} \delta_1 \\  tp_{xy^2x} \delta_1^*  & p_{x^2} + t^2 p_{xy^2x} \end{pmatrix} \\
Q_{1,2} &= 
 \begin{pmatrix} p_{x^2y}\beta_1 +p_{xy^2} \delta_1 + p_{xyxy} (\delta_0\beta_1 + \delta_1\beta_2)
    & t(p_{x^2 y} + p_{xyxy} \delta_0) \\ 
        t(p_{xy^2} + p_{xyxy} \beta_2) & 0 \end{pmatrix} = Q_{2,1}^*\\
Q_{2,2} &=
\begin{pmatrix} p_{y^2}  + p_{yxy}\beta_2 + p_{yx^2y}(\beta_2^2 + \beta_1^* \beta_1) &
  tp_{yx^2y} \beta_1^*  \\   tp_{yx^2y} \beta_1 & p_{y^2}+t^2 p_{yx^2y}
\end{pmatrix}.
\end{split}
\]
 Now conjugate each block with 
$
 \begin{pmatrix} 1 &0  \\ 0 &\frac{1}{t} \end{pmatrix}$
and let $t$ tend to infinity to deduce that 
$Q^\prime = \begin{pmatrix} Q^\prime_{j,k} \end{pmatrix}_{j,k=1}^2\succeq 0,$
 where
\[
\begin{split}
Q^\prime_{1,1}  &=
 \begin{pmatrix} p_{x^2} + p_{xyx} \delta_0 + p_{xy^2x} (\delta_0^2 +\delta_1\delta_1^*) &
    p_{xy^2x} \delta_1 \\  p_{xy^2x} \delta_1^*  &  p_{xy^2x} \end{pmatrix} \\
Q^\prime_{1,2} &= 
 \begin{pmatrix} p_{x^2y}\beta_1 +p_{xy^2} \delta_1 + p_{xyxy} (\delta_0\beta_1 + \delta_1\beta_2)
    & p_{x^2 y} + p_{xyxy} \delta_0 \\ 
           p_{xy^2} + p_{xyxy} \beta_2 & 0 \end{pmatrix} = (Q^\prime_{1,2})^*\\
Q^\prime_{2,2} &=
\begin{pmatrix} p_{y^2}  + p_{yxy}\beta_2 + p_{yx^2y}(\beta_2^2 + \beta_1^* \beta_1) &
  p_{yx^2y} \beta_1^*  \\   p_{yx^2y} \beta_1 & p_{yx^2y}
\end{pmatrix}.
\end{split}
\]
Finally, $Q^\prime$ is unitarily equivalent to $\Mxy$ via the permutation that
interchanges the second and fourth  rows and columns.

\end{document}